\documentclass[a4paper]{article}
\usepackage{amsfonts,amsthm,amsmath,amssymb,mathtools}
\usepackage{array}
\sloppy

\addtolength{\textheight}{\voffset}
\addtolength{\textheight}{50pt}
\addtolength{\textwidth}{\oddsidemargin}
\addtolength{\textwidth}{\evensidemargin}
\addtolength{\textwidth}{\hoffset}
\voffset=-25pt
\addtolength{\evensidemargin}{-\oddsidemargin}
\evensidemargin=0.5\evensidemargin
\oddsidemargin=-\evensidemargin
\addtolength{\oddsidemargin}{0.2in}
\addtolength{\evensidemargin}{-0.2in}

\newtheorem{lemma}{Lemma}
\newtheorem{proposition}{Proposition}
\newtheorem{theorem}{Theorem}
\newtheorem{example}{Example}
\theoremstyle{definition}
\newtheorem{definition}{Definition}

\newcommand{\Ga}{\mathbf G_a}
\newcommand{\Aut}{\mathop{\mathrm{Aut}}}
\newcommand{\Bih}{\mathop{\mathrm{Bih}}}
\newcommand{\Lie}{\mathop{\mathrm{Lie}}}
\newcommand{\ad}{\mathop{\mathrm{ad}}}
\newcommand{\id}{\mathrm{id}}
\newcommand{\tr}{\mathop{\mathrm{tr}}}
\newcommand{\rk}{\mathop{\mathrm{rk}}}
\newcommand{\Ad}{\mathop{\mathrm{Ad}}}
\newcommand{\PP}{\mathbf{P}}
\newcommand{\veps}{\varepsilon}

\begin{document}
\title{Unipotent commutative group actions on flag varieties and nilpotent multiplications}
\author{Rostislav Devyatov\footnote{Supported in part by the Simons Foundation}\\Math Department
of Higher School of Economics, Moscow\\\texttt{deviatov@mccme.ru}}
\maketitle

\begin{abstract}
Our goal is to classify all generically transitive actions of commutative unipotent groups on flag varieties 
up to conjugation. We establish relationship between this problem and classification 
of multiplications with certain properties on Lie algebra representations. Then we classify 
multiplications with the desired properties and solve the initial classification problem.
\end{abstract}

\section{Introduction}
Let $G$ be a semisimple algebraic group over $\mathbb C$.
Consider a \textit{generalized flag variety} $G/P$ where $P\subset G$ is a parabolic subgroup.
Let $m=\dim G/P$, and let $(\Ga)^m$ be the unipotent commutative group 
of dimension $m$.
We are going to classify all generically transitive actions of $(\Ga)^m$ on $G/P$ up to conjugation by 
an automorphism of $G/P$. The variety $G/P$ does not change after taking the quotient 
of $G$ and of $P$ over a finite central subgroup simultaneously, so in the sequel 
we suppose that the center of $G$ is trivial.

If $G/P$ is a projective space, the actions in 
question were classified in \cite{htch}. In \cite{arj}, all possible pairs $(G, P)$ such that at least one 
commutative unipotent action is possible were found, and the problem of classification 
of actions on Grassmannians was stated.

If $G=G^{(1)}\times\ldots\times G^{(s)}$ is the factorization of $G$ into a product of 
simple subgroups, and $P^{(i)}=G^{(i)}\cap P$, then (see \cite[Chapter 4, \S15.4, Theorem 2]{onisch} 
and Section \ref{autgroups} here) 
the group $\widetilde G=\Aut(G/P)^\circ$ 
can be written as $\widetilde G=\widetilde G^{(1)}\times\ldots\times \widetilde G^{(s)}$, 
where $\widetilde G^{(i)}=\Aut(G^{(i)}/P^{(i)})^\circ$, and the action is diagonal. 
Here by the automorphism group 
of an algebraic variety we understand the following.
\begin{definition}\label{myautdef}
Let $X$ be an algebraic variety. An algebraic group $G$ together with an 
algebraic action $G:X$ is called 
the \textit{automorphism group of $X$} if for every algebraic group $H$ acting 
on $X$ algebraically there exists a unique algebraic group morphism $f\colon G\to H$ 
such that for every $x\in X$ and $h\in H$ one has $h\cdot x=f(h)\cdot x$.
\end{definition}
This enables us to consider subgroups 
of $\Aut (G/P)^\circ$ isomorphic to $(\Ga)^m$ instead of actions $(\Ga)^m:(G/P)$ and reduces 
the problem to the case when $G$ is simple.
The existence of the automorphism group for generalized flag varieties 
in this sense
will be established 
in Section \ref{autgroups}.  Note that in \cite{onisch}
a different definition of the automorphism group is used.

Moreover, it follows from the same theorem in \cite{onisch} and Section \ref{autgroups} 
here that if $G$ is simple, then $\widetilde G=\Aut (G/P)^\circ$ 
is also simple with trivial center, and there exists 
a parabolic subgroup $\widetilde P\subseteq \widetilde G$ such that $G/P$ is 
$\widetilde G$-isomorphic to $\widetilde G/\widetilde P$, where the action 
on $\widetilde G/\widetilde P$ originates from the left action $\widetilde G:\widetilde G$.
More precisely, for $(G,P)=(PSp_{2l}, P_1)$, 
$(\text{group of type $G_2$}, P_1)$, 
$(SO_{2l-1}, P_{l-1})$,  
one has $(\widetilde G, \widetilde P)=(PSL_{2l},P_1)$, 
$(SO_7,P_1)$ or $(PSO_{2l},P_l)$, respectively, and for all other pairs $(G\text{ simple}, P\text{ parabolic})$
one has $\widetilde G=G$, $\widetilde P=P$. 
The classification problem 
is now reduced to
the cases where $G$ is simple and $G=\Aut (G/P)^\circ$.
(Here and further $P_i$ denotes the maximal parabolic subgroup corresponding to the $i$-th simple root, 
roots are enumerated as in \cite{bou}.)


All pairs of a simple group $G$ and its parabolic subgroup $P$ up to isogeny
such that $G=\Aut (G/P)^\circ$ and $G/P$ allows a generically transitive $(\Ga)^m$-action are 
listed in the following table, see \cite[Theorem 1]{arj}. 
$$
\begin{array}{c|c}
G & P \\
\hline\hline
PSL_{l+1} & P_i\text{ ($1\le i\le l$)} \\
\hline
SO_{2l+1} & P_1 \\
\hline
PSp_{2l} & P_l \\
\hline
PSO_{2l} & P_i\text { ($i=1,l-1,l$)} \\
\hline
\text{Group of type } E_6 & P_i\text{ ($i=1,6$)} \\
\hline
\text{Group of type } E_7 & P_7 \\
\end{array}
$$
Note that \cite[Theorem 1]{arj} in all these cases the unipotent radical of $P$ is commutative. Hence, 
to classify generically transitive $(\Ga)^m$-actions on generalized flag varieties, it is sufficient 
to consider only varieties of the form $G/P$, where $G$ is simple, 
and $P$ is its parabolic subgroup such 
that $G=\Aut (G/P)^\circ$ and the unipotent radical of $P$ is commutative.

Every Lie algebra in what follows is a subalgebra of the Lie algebra of a reductive algebraic group $H$, and 
it will be considered together with this embedding. We call such a Lie algebra $\mathfrak a$ \textit{unipotent}, 
if it is the Lie algebra of a unipotent algebraic subgroup of $H$. In other words, $\mathfrak a$ is 
called unipotent if it is a subalgebra of the Lie algebra of a maximal unipotent subgroup of $H$. Alternatively, 
if $V$ is a representation of $H$ with a finite kernel, $\mathfrak a$ is unipotent if every element of $\mathfrak a$ 
acts on $V$ by a nilpotent operator.

\begin{proposition}\label{gaactsubalg}
(see Section \ref{reduction}) 
Let $G$ be a simple algebraic group, $P$ be a parabolic subgroup such 
that $\Aut (G/P)^\circ=G$ and the unipotent radical of $P$ is commutative, $\mathfrak g=\Lie G$, 
$\mathfrak p=\Lie P$. Then there is a bijection between generically transitive actions $(\Ga)^m:(G/P)$
up to $G$-conjugation and commutative unipotent subalgebras $\mathfrak a\subset\mathfrak g$ such that 
$\mathfrak a\cap\mathfrak p=0$ and $\mathfrak a\oplus\mathfrak p=\mathfrak g$ up to $P$-conjugation.
\end{proposition}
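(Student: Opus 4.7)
The plan is to construct the bijection in both directions and then check compatibility of the two equivalence relations. First I would pass from an action to a subalgebra: given a generically transitive $(\Ga)^m$-action on $G/P$, the universal property of $G=\Aut(G/P)^\circ$ in Definition \ref{myautdef} provides a unique algebraic group homomorphism $f\colon(\Ga)^m\to G$ realizing this action. Generic transitivity forces $\ker f$ to act trivially on a dense open subset, hence on all of $G/P$; uniqueness in Definition \ref{myautdef} then makes $\ker f$ trivial. Thus $A:=f((\Ga)^m)\subseteq G$ is a closed commutative unipotent subgroup of dimension $m$, and $\mathfrak{a}:=\Lie A$ is a commutative unipotent subalgebra of the same dimension. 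The differential at $e$ of the orbit map $A\to G/P$, $a\mapsto a\cdot eP$, is the projection $\mathfrak{a}\hookrightarrow\mathfrak{g}\twoheadrightarrow\mathfrak{g}/\mathfrak{p}=T_{eP}(G/P)$, so the orbit $A\cdot eP$ is open iff $\mathfrak{a}+\mathfrak{p}=\mathfrak{g}$, equivalently $\mathfrak{a}\oplus\mathfrak{p}=\mathfrak{g}$ by dimension count. Since the action is generically transitive, some orbit $A\cdot gP$ is open; replacing $A$ by $g^{-1}Ag$ (i.e., $G$-conjugating the action), I may assume the open orbit passes through $eP$, which gives the direct sum decomposition.

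Next I would construct the inverse map. Given a commutative unipotent subalgebra $\mathfrak{a}\subseteq\mathfrak{g}$ with $\mathfrak{a}\oplus\mathfrak{p}=\mathfrak{g}$, I would embed $\mathfrak{a}$ into $\Lie U$ for some maximal unipotent subgroup $U\subseteq G$. In characteristic zero the exponential map $\exp\colon\Lie U\to U$ is an isomorphism of affine varieties; since $\mathfrak{a}$ is abelian, $\exp(x)\exp(y)=\exp(x+y)$ for $x,y\in\mathfrak{a}$, so $A:=\exp(\mathfrak{a})$ is a closed commutative subgroup of $G$ isomorphic as an algebraic group to $(\Ga)^m$. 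Complementarity of $\mathfrak{a}$ and $\mathfrak{p}$ then makes the orbit $A\cdot eP$ open, so $A$ acts generically transitively.

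The hard part will be to show compatibility with the equivalence relations. Two actions are $G$-conjugate iff the corresponding subgroups $A_1,A_2\subseteq G$ satisfy $A_2=gA_1g^{-1}$ for some $g\in G$, and I need to show that under the complementarity conditions $\mathfrak{a}_i\oplus\mathfrak{p}=\mathfrak{g}$ one can always choose $g$ in $P$. The key observation is that the $A_2$-orbit $A_2\cdot gP=gA_1g^{-1}\cdot gP=g(A_1\cdot eP)$ is open (being the image of an open orbit under an automorphism of $G/P$), while the $A_2$-orbit of $eP$ is also open by assumption; uniqueness of the open orbit of a connected group on an irreducible variety forces these orbits to coincide, so $gP=a_2\cdot eP$ for some $a_2\in A_2$, giving $g=a_2 p$ with $p\in P$. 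Commutativity of $A_2$ makes $\Ad(a_2)$ act as the identity on $\mathfrak{a}_2$, and applying $\Ad(a_2^{-1})$ to $\mathfrak{a}_2=\Ad(a_2 p)\mathfrak{a}_1=\Ad(a_2)\Ad(p)\mathfrak{a}_1$ gives $\mathfrak{a}_2=\Ad(p)\mathfrak{a}_1$. The converse, that $P$-conjugate subalgebras give rise to $G$-conjugate actions, is immediate from $\exp(\Ad(p)\mathfrak{a})=p\exp(\mathfrak{a})p^{-1}$. This matching step is where commutativity of $A$ is essential; the remaining verifications are routine.
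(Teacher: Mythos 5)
Your proof is correct, and the key step --- showing that two complements $\mathfrak a_1$ and $\mathfrak a_2=(\Ad g)\mathfrak a_1$ of $\mathfrak p$ are already $P$-conjugate --- is handled by a genuinely different mechanism than in the paper. The paper transfers the problem to the Grassmannian $Gr(m,\mathfrak g)$: it shows that $(\Ad P)\mathfrak a_1$ and $(\Ad P)\mathfrak a_2$ are both open in the $G$-orbit of $\mathfrak a_1$ there (because $PA_i$ is open in $G$ while $\Ad(A_i)$ stabilizes $\mathfrak a_i$), so they meet by irreducibility of the orbit. You instead stay on $G/P$: the two open $A_2$-orbits through $eP$ and $gP$ must coincide since a connected group has at most one open orbit on an irreducible variety, which factors $g=a_2p$ with $p\in P$, and then $\mathfrak a_2=(\Ad a_2)(\Ad p)\mathfrak a_1=(\Ad p)\mathfrak a_1$. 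Your route is more direct and exploits the group structure; the paper's Grassmannian setup has the advantage of being reused essentially verbatim in the proof of Proposition \ref{subalgisunip}. Three small remarks: (i) your justification that $\ker f$ is trivial is garbled --- elements of $\ker f$ act trivially on all of $G/P$ simply by the definition of $f$, and what actually forces $\ker f=1$ is a dimension count: $(\Ga)^m/\ker f$ would otherwise be a group of dimension $<m$ with a dense orbit on the $m$-dimensional $G/P$, since $(\Ga)^m$ has no nontrivial finite subgroups; (ii) you do not actually need commutativity of $A_2$ in the last step, since $(\Ad a_2)\mathfrak a_2=\mathfrak a_2$ holds for any subgroup and any of its elements; (iii) you spell out the $\exp$ construction of the inverse map and the mutual inverseness of the two assignments, which the paper leaves implicit.
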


Fix a Borel subgroup $B\subset G$ and a maximal torus $T\subset B$. Let $B^-\subset G$ be the Borel subgroup 
such that $B\cap B^-=T$. Then for every parabolic subgroup $P\subseteq G$ such that $B\subseteq P$ 
there exists a unique 
parabolic subgroup $P^-$ such that $B^-\subseteq P^-$ and $L=P^-\cap P$ is a Levi subgroup of $P$.
Let $U^-$ be the unipotent radical of $P^-$, $\mathfrak u^-=\Lie U^-$. Then $\mathfrak g=\mathfrak u^-\oplus\mathfrak p$.
We also have a decomposition $\mathfrak g=\mathfrak a\oplus\mathfrak p$, so every $u\in \mathfrak u^-$ can be written as 
$u=a+p$ for some $p\in \mathfrak p$. Set $\varphi (u)=-p$. Clearly, $\varphi\colon\mathfrak u^-\to\mathfrak p$ is 
a linear map and for every $u\in\mathfrak u^-$ we have $u+\varphi(u)\in\mathfrak a$. 
On the other hand, every $a\in\mathfrak a$ can be written as $a=v+q$, $v\in\mathfrak u^-$, 
$q\in \mathfrak p$, and we see from the definition of $\varphi$ that $q=\varphi (v)$ satisfies this equation. 
Hence, $\mathfrak a=\{u+\varphi (u)\mid u\in\mathfrak u^-\}$. 
Note that this correspondence between subalgebras $\mathfrak a$ and maps $\varphi$ is 
compatible with the $P$-actions on the set of $m$-dimensional subalgebras of $\mathfrak g$, 
on $\mathfrak u^-$, and on $\mathfrak p$.
For a general subalgebra $\mathfrak a$ from 
Proposition \ref{gaactsubalg} we can only say that $\varphi (\mathfrak u^-)\subseteq\mathfrak p$, but 
the following proposition shows that we can say more about $\varphi (\mathfrak u^-)$ if we apply 
a suitable conjugation by an element of $P$ to $\mathfrak a$. Denote the unipotent radical of 
$B^-$ by $U_0^-$.

\begin{proposition}\label{subalgisunip}
(see Section \ref{reduction}) 
For every unipotent commutative subalgebra $\mathfrak a\subset \mathfrak g$ such that $\mathfrak a\cap\mathfrak p=0$
and $\mathfrak a\oplus\mathfrak p=\mathfrak g$ there exists 
$p\in P$ such that $(\Ad p)\mathfrak a\subseteq \mathfrak u_0^-=
\Lie U_0^-$.
\end{proposition}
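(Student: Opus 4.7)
The plan is to find a Borel subalgebra $\mathfrak b' \subseteq \mathfrak g$ that contains $\mathfrak a$ in its nilradical and is simultaneously $P$-conjugate to $\mathfrak b^- = \Lie B^-$; the element $p \in P$ implementing that conjugation will then pull $\mathfrak a$ inside $\mathfrak u_0^-$.

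First, I would invoke the standard fact that a commutative Lie subalgebra of $\mathfrak g$ consisting of nilpotent elements lies in the nilradical $\mathfrak u'$ of some Borel subalgebra $\mathfrak b' \subseteq \mathfrak g$. One way to see this: exponentiate $\mathfrak a$ to a connected commutative unipotent subgroup $A \subseteq G$, place $A$ inside some Borel $B'$, and note that a unipotent subgroup of $B'$ sits in its unipotent radical.

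Second, I would characterize the Borels $\mathfrak b''$ that are $P$-conjugate to $\mathfrak b^-$. Since $P \cap B^-$ is the Borel $B^-_L$ of the Levi $L$ consisting of $T$ together with the negative root vectors for roots of $L$, one has $\dim P - \dim B^-_L = \#\Phi^+ = \dim G/B^-$, so the $P$-orbit of $[\mathfrak b^-]$ in the (irreducible) variety of Borels is open, and is therefore the unique open $P$-orbit. For any Borel $\mathfrak b''$ the $P$-stabilizer of $[\mathfrak b'']$ is $P \cap B''$, with Lie algebra $\mathfrak p \cap \mathfrak b''$, so the orbit dimension equals $\dim P - \dim(\mathfrak p \cap \mathfrak b'')$; membership in the open $P$-orbit is thus equivalent to $\dim(\mathfrak p \cap \mathfrak b'')$ being minimal, equivalently to the transversality condition $\mathfrak b'' + \mathfrak p = \mathfrak g$.

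Combining the two ingredients: from $\mathfrak a \subseteq \mathfrak b'$ and the hypothesis $\mathfrak a \oplus \mathfrak p = \mathfrak g$ we get $\mathfrak b' + \mathfrak p \supseteq \mathfrak a + \mathfrak p = \mathfrak g$, so $\mathfrak b' = \Ad(p)\mathfrak b^-$ for some $p \in P$. Conjugation sends nilradical to nilradical, so $\mathfrak u' = \Ad(p)\mathfrak u_0^-$ and hence $\Ad(p^{-1})\mathfrak a \subseteq \mathfrak u_0^-$; the element $p^{-1} \in P$ is what we want. The main obstacle I anticipate is justifying the transversality characterization of the open $P$-orbit: it rests on the irreducibility of the flag variety together with the observation that every non-open $P$-orbit has strictly smaller dimension (hence strictly larger intersection with $\mathfrak p$), and deserves to be stated carefully. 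Everything else is formal.
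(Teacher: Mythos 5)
Your proof is correct, but it takes a different route from the paper's. The paper argues inside the $G$-orbit $X$ of the point $[\mathfrak a]\in Gr(m,\mathfrak g)$: it reuses from the proof of Proposition \ref{gaactsubalg} that $(\Ad P)\mathfrak a$ is open in $X$ (this is where the hypothesis $\mathfrak a\oplus\mathfrak p=\mathfrak g$ enters, via openness of $AP$ in $G$), picks $g\in G$ with $(\Ad g)\mathfrak a\subseteq\mathfrak u_0^-$, notes that $PU_0^-$ is open in $G$ by the Bruhat decomposition so that $(\Ad PU_0^-)(\Ad g)\mathfrak a$ is also open in $X$, and intersects the two open subsets of the irreducible variety $X$; since $\Ad U_0^-$ preserves $\mathfrak u_0^-$, the resulting $P$-conjugate of $\mathfrak a$ lies in $\mathfrak u_0^-$. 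You instead work in the flag variety of Borel subalgebras: you place $\mathfrak a$ in the nilradical of some Borel $\mathfrak b'$, observe that $\mathfrak b'+\mathfrak p\supseteq\mathfrak a+\mathfrak p=\mathfrak g$, and show that transversality to $\mathfrak p$ characterizes the unique open $P$-orbit on $G/B$ (the orbit of $[\mathfrak b^-]$), so $\mathfrak b'$ is $P$-conjugate to $\mathfrak b^-$ and its nilradical to $\mathfrak u_0^-$. Both arguments hinge on the uniqueness of an open orbit in an irreducible variety, and your dimension count $\dim P-\dim(P\cap B^-)=\#\Phi^+$ is the same Bruhat-type input in different clothing; but your version is more self-contained (it does not lean on the openness statement extracted from the previous proof) and yields the slightly stronger and reusable fact that \emph{every} Borel containing $\mathfrak a$ in its nilradical is already $P$-conjugate to $B^-$, whereas the paper's intersection argument only produces some $P$-conjugate of $\mathfrak a$ landing in $\mathfrak u_0^-$. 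The one step that deserves the care you flag --- that an orbit of dimension $\dim G/B$ is open and that the open orbit is unique --- is standard and you justify it correctly via irreducibility and the stabilizer computation $\mathrm{Stab}_P([\mathfrak b''])=P\cap B''$.
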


If $\mathfrak a\subseteq \mathfrak u_0^-$, then for every 
$u\in\mathfrak u^-$ we have $u+\varphi (u)\in\mathfrak u_0^-$, so,
since $\mathfrak u^-$ and $\mathfrak u_0^-$ are subspaces of 
$\mathfrak p^-$,
$\varphi(u)\in\mathfrak p^-\cap\mathfrak p=\mathfrak l=\Lie L$.
Therefore, every $P$-conjugation class of 
commutative unipotent subalgebras $\mathfrak a\subset\mathfrak g$ such that $\mathfrak a\cap\mathfrak p=0$
and $\mathfrak a\oplus\mathfrak p=\mathfrak g$ can be defined by some 
$L$-conjugation class of 
linear maps $\varphi\colon \mathfrak u^-\to\mathfrak l$. 
Notice that it is not true in general that every linear map $\varphi\colon \mathfrak u^-\to\mathfrak l$
leads to a suitable subalgebra.
It could also turn out that several classes of maps up to $L$-conjugation
define the same class of subalgebras up to $P$-conjugation, but later we will see 
that this is a bijection.

Such a map $\varphi$ enables us to define a multiplication $\mathfrak u^-\times\mathfrak u^-\to\mathfrak u^-$, 
namely $u\times v=[\varphi(u),v]$. 
Since the representation $\mathfrak l:\mathfrak u^-$ is faithful, 
given such a multiplication defined by a linear map $\mathfrak u^-\to\mathfrak l$, it is possible
to recover the linear map. The following theorem describes the multiplications 
$\mathfrak u^-\times\mathfrak u^-\to\mathfrak u^-$ that are really defined my maps 
$\varphi\colon \mathfrak u^-\to\mathfrak l$ such that 
$\mathfrak a=\{u+\varphi (u)\mid u\in\mathfrak u^-\}$ is a commutative unipotent subalgebra.
Here and further, if a vector space $V$ is equipped with 
a multiplication $V\otimes V\to V$, and $v\in V$, we denote by $\mu_v$ the operator 
$\mu_v\colon V\to V$ 
defined by $\mu_v w=vw$. We shortly call it a \textit{multiplication operator}.

\begin{theorem}\label{actsandmults}
(see Section \ref{reduction}) 
A multiplication $\mathfrak u^-\times\mathfrak u^-\to\mathfrak u^-$ defines 
a commutative unipotent subalgebra 
$\mathfrak a\subset\mathfrak p^-$ 
satisfying 
$\mathfrak a\cap\mathfrak p=0$
and $\mathfrak a\oplus\mathfrak p=\mathfrak g$
as described above if and only if this multiplication is 
commutative, associative, and every multiplication operator 
$\mu_w\colon\mathfrak u^-\to \mathfrak u^-$ ($w\in \mathfrak u^-$)
is nilpotent and 
coincides with an operator of the form $\ad g|_{\mathfrak u^-}$, where $g\in \mathfrak l$.
\end{theorem}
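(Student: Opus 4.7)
The plan is to use the correspondence $\mathfrak{a}\leftrightarrow\varphi$ set up in the discussion preceding the theorem, where $\mathfrak{a}=\{u+\varphi(u):u\in\mathfrak{u}^-\}$ for a linear $\varphi\colon\mathfrak{u}^-\to\mathfrak{l}$ and the multiplication is $u\cdot v=[\varphi(u),v]$. The third condition in the theorem---that every $\mu_w$ is of the form $\ad g|_{\mathfrak{u}^-}$ for some $g\in\mathfrak{l}$---simply encodes the existence of such a $\varphi$, with $\varphi(w)=g$ uniquely determined by the faithfulness of the $\mathfrak{l}$-action on $\mathfrak{u}^-$. Bilinearity of the multiplication then forces $\varphi$ to be linear. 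Thus the theorem reduces to translating commutativity and unipotence of $\mathfrak{a}$ into the remaining two conditions on the multiplication.

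For the forward direction, expand
$$[u+\varphi(u),\,v+\varphi(v)]=[u,v]+[u,\varphi(v)]+[\varphi(u),v]+[\varphi(u),\varphi(v)]$$
and split according to $\mathfrak{g}=\mathfrak{u}^-\oplus\mathfrak{l}\oplus\mathfrak{u}$. Since the unipotent radical of $P$ is commutative, so is that of $P^-$, giving $[u,v]=0$; the $\mathfrak{u}^-$-part is then $u\cdot v-v\cdot u$, so commutativity of $\mathfrak{a}$ yields commutativity of $\cdot$. The $\mathfrak{l}$-part is $[\varphi(u),\varphi(v)]$, which thus vanishes; applying $\ad(\cdot)|_{\mathfrak{u}^-}$ gives $\mu_u\mu_v=\mu_v\mu_u$, i.e.\ the ``left-commutative'' identity $u(vw)=v(uw)$. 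Combined with commutativity via the chain
$$a(bc)=b(ac)=b(ca)=c(ba)=c(ab)=(ab)c$$
(alternating applications of the two identities), this yields associativity. For nilpotence of $\mu_u$, use that $u+\varphi(u)$ is $\ad$-nilpotent on $\mathfrak{g}$ and that $\mathfrak{u}^-$ is an ideal of $\mathfrak{p}^-$: the induced action on $\mathfrak{p}^-/\mathfrak{u}^-\cong\mathfrak{l}$ is $\ad\varphi(u)$ and is still nilpotent, so $\varphi(u)$ is a nilpotent element of the reductive algebra $\mathfrak{l}$, whence $\mu_u=\ad\varphi(u)|_{\mathfrak{u}^-}$ is nilpotent.

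For the reverse direction, given the multiplication, define $\varphi$ as above and set $\mathfrak{a}=\{u+\varphi(u)\}$. The $\mathfrak{u}^-$-part of $[u+\varphi(u),\,v+\varphi(v)]$ vanishes by commutativity of $\cdot$, and for the $\mathfrak{l}$-part,
$$\ad[\varphi(u),\varphi(v)]|_{\mathfrak{u}^-}=[\mu_u,\mu_v]=\mu_{uv}-\mu_{vu}=0$$
(using associativity for $\mu_u\mu_v=\mu_{uv}$ and commutativity of $\cdot$ for $\mu_{uv}=\mu_{vu}$); faithfulness then forces $[\varphi(u),\varphi(v)]=0$, so $\mathfrak{a}$ is abelian. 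Transversality $\mathfrak{a}\cap\mathfrak{p}=0$ and $\mathfrak{a}\oplus\mathfrak{p}=\mathfrak{g}$ is immediate by dimension. For unipotence, the key point is that the Lie algebra projection $\pi\colon\mathfrak{p}^-\to\mathfrak{l}$ preserves Jordan decomposition: an element $p\in\mathfrak{p}^-$ is nilpotent in $\mathfrak{g}$ iff $\pi(p)$ is nilpotent in $\mathfrak{l}$, because the semisimple part $p_s$ must lie in $\ker\pi=\mathfrak{u}^-$, and $\mathfrak{u}^-$ contains only nilpotent elements, forcing $p_s=0$. Now nilpotence of $\mu_u$ together with faithfulness forces $\varphi(u)$ to be nilpotent in $\mathfrak{l}$, so by the observation $u+\varphi(u)$ is nilpotent in $\mathfrak{g}$; every element of the abelian $\mathfrak{a}$ is nilpotent, so $\mathfrak{a}$ is unipotent.

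The main obstacle is associativity in the forward direction: commutativity of $\mathfrak{a}$ yields only commutativity and the left-commutative identity $u(vw)=v(uw)$ directly, and associativity must be extracted from their combination by the short chain above, which is easy to overlook. A secondary delicate step is the back-and-forth between nilpotence of the operator $\mu_u$ on $\mathfrak{u}^-$ and nilpotence of the element $\varphi(u)\in\mathfrak{l}\subseteq\mathfrak{g}$ itself, which relies on Jordan decomposition in the reductive algebra $\mathfrak{l}$ and the faithfulness of the $\mathfrak{l}$-action on $\mathfrak{u}^-$.
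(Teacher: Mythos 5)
Your proposal is correct in its overall structure and follows the paper's core strategy: the same correspondence $\mathfrak a\leftrightarrow\varphi$, the same expansion of $[u+\varphi(u),v+\varphi(v)]$ split along $\mathfrak u^-\oplus\mathfrak l$, and the same derivation of associativity by combining commutativity with the identity $u(vw)=v(uw)$ coming from $[\varphi(u),\varphi(v)]=0$. Where you genuinely diverge is in the two nilpotency/unipotency steps. For nilpotency of $\mu_u$ in the forward direction, the paper invokes Proposition \ref{subalgisunip} to conjugate $\mathfrak a$ into $\mathfrak u_0^-$ by an element of $P$ and then uses $\Ad$-invariance of the Jordan--Chevalley decomposition; you instead argue directly from the fact that $u+\varphi(u)$ is a nilpotent element of $\mathfrak g$, which is cleaner and avoids the conjugation entirely. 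For unipotence of $\mathfrak a$ in the reverse direction, the paper conjugates the commutative unipotent subalgebra $\varphi(\mathfrak u^-)\subset\mathfrak l$ into $\mathfrak u_0^-\cap\mathfrak l$ by an element of $L$ and concludes $(\Ad l)\mathfrak a\subseteq\mathfrak u_0^-$; you instead show directly that every element of $\mathfrak a$ is a nilpotent element of $\mathfrak g$ via the fact that the quotient map $\mathfrak p^-\to\mathfrak l$ preserves Jordan decomposition and that $\ker$ of this map consists of nilpotent elements. Both routes are valid; yours trades the conjugation arguments for Jordan-decomposition bookkeeping.

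There is one flawed inference in your forward direction. You pass from nilpotency of the operator induced by $\ad(u+\varphi(u))$ on $\mathfrak p^-/\mathfrak u^-\cong\mathfrak l$, which is $\ad_{\mathfrak l}\varphi(u)$, to the conclusion that $\varphi(u)$ is a nilpotent \emph{element} of $\mathfrak l$. For a reductive (non-semisimple) $\mathfrak l$ this implication fails: a central semisimple element $x$ has $\ad_{\mathfrak l}x=0$, yet $\ad x|_{\mathfrak u^-}$ is a nonzero semisimple operator, which is exactly the conclusion you need to exclude. The step is easily repaired with tools you already use elsewhere in the proof: either apply your own observation that the projection $\mathfrak p^-\to\mathfrak l$ preserves Jordan decomposition, so that nilpotency of the element $u+\varphi(u)\in\mathfrak g$ directly gives nilpotency of the element $\varphi(u)\in\mathfrak l$; or, even more simply, note that since $[u,\mathfrak u^-]=0$ one has $\ad(u+\varphi(u))|_{\mathfrak u^-}=\ad\varphi(u)|_{\mathfrak u^-}=\mu_u$, and the restriction of the nilpotent operator $\ad(u+\varphi(u))$ to the invariant subspace $\mathfrak u^-$ is nilpotent. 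With that correction the argument is complete.
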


Thus, to classify generically transitive $(\Ga)^m$-actions on $G/P$, it is sufficient 
to find all multiplications satisfying the conditions from Theorem \ref{actsandmults} 
on certain representations of certain reductive Lie algebras. This problem can be naturally generalized 
as follows.
Let $L$ be a connected reductive algebraic group, let $V$ 
be a representation of $L$. These data define an action of $\mathfrak l=\Lie L$ on $V$. 
Denote the action of an element $x\in\mathfrak l$ 
on $V$ by $\rho(x)$.
We want to classify up to $L$-conjugation all multiplications $V\times V\to V$ satisfying the 
conditions from Theorem \ref{actsandmults}, namely:
\begin{enumerate}
\item\label{firstcompcond} The multiplication is commutative.
\item The multiplication is associative.
\item Every multiplication operator $\mu_w\colon V\to V$ 
($w\in V$)
is nilpotent.
\item\label{lastcompcond} For every $w\in W$ there exists $x\in \mathfrak l$ such that $\rho(x)=\mu_w$.
\end{enumerate}
We call a multiplication $V\times V\to V$ \textit{$\mathfrak l$-compatible} if conditions
\ref{firstcompcond}--\ref{lastcompcond} hold for it.

First, let us reduce this problem to the case of an irreducible representation of a simple group.
Since all multiplication operators $\mu_v$ are nilpotent, the elements $x\in \mathfrak l$ such that $\rho (x)=\mu_v$
can be taken from $[\mathfrak l, \mathfrak l]$, so we will always suppose that $x\in[\mathfrak l, \mathfrak l]$.
Moreover, we are going to prove the following proposition:
\begin{proposition}\label{possibledecomp} (see Section \ref{multgeneral})
Let $L$ be a connected reductive group, $\mathfrak l=\Lie L$. 
Let $V$ be a representation of $L$ such that there exists a nonzero 
$\mathfrak l$-compatible multiplication
$V\times V\to V$.
Let $[\mathfrak l, \mathfrak l]=\mathfrak l_1\oplus\ldots\oplus\mathfrak l_s$ 
be the decomposition of $[\mathfrak l, \mathfrak l]$ into simple summands.
Then, after a suitable permutation of the subalgebras $\mathfrak l_i$,
there exists a decomposition $V=V_1\oplus\ldots\oplus V_t$ 
and an index $r\le s$, $r\le t$ such that:
\begin{enumerate}
\item $V_i$ is an irreducible representation of $\mathfrak l_i$ for $1\le i\le r$.
\item $\mathfrak l_i\cdot V_j=0$ for $1\le i, j\le r$, $i\ne j$.
\item $V_iV_j=0$ for $1\le i, j\le r$, $i\ne j$.
\item $\mathfrak l_i\cdot V_j=0$ for $1\le i\le r$, $r<j\le t$.
\item $\mathfrak l_i\cdot V_j=0$ for $r<i\le s$, $1\le j\le r$.
\item $V_iV=0$ for $r<i\le t$.
\end{enumerate}
\end{proposition}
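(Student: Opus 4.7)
My plan is to combine complete reducibility of $V$ as an $L$-module with a Jordan-type argument separating a nilpotent element of $\rho([\mathfrak l,\mathfrak l])$ across its simple ideals. For every $v\in V$ I will pick a preimage $\xi_v\in[\mathfrak l,\mathfrak l]$ with $\rho(\xi_v)=\mu_v$ and decompose $\xi_v=\sum_i\xi_v^{(i)}$ with $\xi_v^{(i)}\in\mathfrak l_i$. The initial observation is that the subalgebras $\rho(\mathfrak l_i)$ are simple ideals of $\rho([\mathfrak l,\mathfrak l])$ that commute pairwise in $\mathrm{End}\, V$, so the summands $\rho(\xi_v^{(i)})$ commute; since $\mu_v$ is nilpotent by condition~(3), the uniqueness of the Jordan decomposition of commuting operators forces each $\rho(\xi_v^{(i)})$ to be nilpotent on its own.

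Next I would decompose $V=\bigoplus_\beta V_\beta$ into $L$-irreducibles; each $V_\beta$ has the form of an outer tensor product $W_1^\beta\boxtimes\cdots\boxtimes W_s^\beta$ of $\mathfrak l_i$-irreducibles (twisted by a character of the center of $\mathfrak l$), and I will record its \emph{support} $J(\beta):=\{i : W_i^\beta\text{ is non-trivial}\}$. The core step is to show that any $V_\beta$ with $|J(\beta)|\ge 2$ lies in the multiplicative annihilator, and that for each simple factor $\mathfrak l_i$ the active irreducibles supported on $\{i\}$ collect into a single $\mathfrak l_i$-irreducible subspace $V_i$. Granting this, after permuting the $\mathfrak l_i$'s so that the active ones come first, the subspaces $V_1,\ldots,V_r$ are these active $\mathfrak l_i$-irreducibles and $V_{r+1}\oplus\cdots\oplus V_t$ is obtained by splitting the remaining part of $V$ (inactive irreducibles and annihilator pieces) into $L$-invariant summands. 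Conditions~(1), (2), (4), (5) then follow immediately from the tensor-product description of $L$-irreducibles together with the definition of $J(\beta)$, while (3) and (6) follow from the core step.

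The main obstacle is establishing the core step. Concretely, on a $V_\beta$ with $i,j\in J(\beta)$, $i\ne j$, the commuting nilpotent components $\rho(\xi_v^{(i)})$ and $\rho(\xi_v^{(j)})$ act like $A\otimes\id$ and $\id\otimes B$ on the tensor-product structure of $V_\beta$. The plan is to feed this description into commutativity $\mu_v w=\mu_w v$, associativity $\mu_{vw}=\mu_v\mu_w$, and nilpotency of every $\mu_v$ to force either $\rho(\xi_v^{(i)})=0$ or $\rho(\xi_v^{(j)})=0$ on the part of $V$ that pairs with $v$ nontrivially. A likely technical tool is a weight-space analysis relative to a maximal torus of $L$: operators coming from different $\mathfrak l_i$ shift weights in orthogonal directions, and tracking where products $\mu_v w$ of weight vectors can land under commutativity and associativity should force the multiplicative image to lie in the single-support part of~$V$.
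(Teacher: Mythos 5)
Your outer architecture matches the paper's: decompose $V$ into $L$-irreducibles, note that distinct irreducible summands multiply to zero (this you do get essentially for free from commutativity, since $\mu_v w\in V_{\beta'}$ while $\mu_w v\in V_{\beta}$), and reduce everything to showing that each ``active'' irreducible has single support. But there is a genuine gap exactly where you flag the ``main obstacle'': the core step is asserted with only a speculative plan (``feed this into commutativity, associativity and nilpotency\dots a likely technical tool is a weight-space analysis''), and no actual argument is given. That core step is precisely the paper's Lemma~\ref{possibledecompirred}, and its proof is not a routine weight computation. The paper's argument is: write the irreducible as $V_k\otimes W$; use tracelessness to see that every multiplication operator lies in $\mathfrak{sl}(V_k)\otimes\langle\id_{W}\rangle\oplus\langle\id_{V_k}\rangle\otimes\mathfrak{sl}(W)$; then, if some $\varphi(v)=x+y$ had both components nonzero, apply this membership to $w=v^2$ and use associativity to compute $\mu_{v^2}=(\rho(x)\otimes\id+\id\otimes\rho(y))^2$, whose cross term $2\rho(x)\otimes\rho(y)$ is a nonzero element of $\mathfrak{sl}(V_k)\otimes\mathfrak{sl}(W)$ --- contradiction. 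Even after that one must still rule out $\dim W>1$ when $\varphi(V)\subseteq\mathfrak l_k$; the paper does this by slicing $V_k\otimes W$ into the $\mathfrak l_k$-invariant subspaces $V_k\otimes\langle e_j\rangle$ and observing that commutativity would force $vw=wv$ to lie in two different slices. Neither of these two arguments appears in your proposal, so conditions (3) and (6) and the single-support claim remain unproven.

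Two further points. First, your ``initial observation'' is misjustified: commutativity of the summands plus nilpotency of their sum does not force each summand to be nilpotent (take $\id$ and $-\id$). The conclusion does hold here, but only because of the outer-tensor-product structure: the semisimple parts sum to zero, each acts as $\id\otimes\cdots\otimes S_i\otimes\cdots\otimes\id$, and eigenvalue bookkeeping forces each $S_i$ to be scalar, hence zero by tracelessness. This is repairable, but note that the paper sidesteps it entirely --- it uses the trace argument only to move $\varphi(v)$ from $\mathfrak l$ into $[\mathfrak l,\mathfrak l]$ and never needs individual nilpotency of the components. Second, your phrase ``the active irreducibles supported on $\{i\}$ collect into a single $\mathfrak l_i$-irreducible subspace'' quietly assumes there is at most one active irreducible per simple factor, and condition (4) likewise requires that the factor $\mathfrak l_i$ attached to an active $V_i$ act trivially on every \emph{other} irreducible summand, active or not; neither follows from the definition of $J(\beta)$ alone. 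Both need the paper's separate argument: if $\mathfrak l_i$ acted nontrivially, hence faithfully, on some $V_j$ with $j\ne i$, then $\mu_v=\rho(x_i)$ with $0\ne x_i\in\mathfrak l_i$ would not annihilate $V_j$, contradicting $V_iV_j=0$.
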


Informally speaking, the representation $[\mathfrak l,\mathfrak l]:V$ can be decomposed into a sum of two 
direct summands, "the nontrivial summand" $\mathfrak l_1\oplus\ldots\oplus\mathfrak l_r:V_1\oplus\ldots\oplus V_r$
and "the trivial summand" $\mathfrak l_{r+1}\oplus\ldots\oplus\mathfrak l_s:V_{r+1}\oplus\ldots\oplus V_t$.
The "trivial" part of the algebra or the "trivial" part of the representation (or both) can be zero.
The action of the "trivial" part of $[\mathfrak l,\mathfrak l]$ on the "nontrivial" part of $V$ is zero, as 
well as the action of the "nontrivial" part of $[\mathfrak l,\mathfrak l]$ on the "trivial" part of $V$.
The multiplication between the "nontrivial" and the "trivial" part of $V$ is also zero, 
as well as the multiplication inside the "trivial" part of $V$. The "nontrivial" parts of $[\mathfrak l,\mathfrak l]$
and $V$ can be further decomposed into direct sums of simple algebras and 
irreducible representations of \textbf{each of the simple algebras}. The multiplication between 
different irreducible subrepresentations inside the "nontrivial" part of $V$ is also zero.

Notice that toric direct summands in $\mathfrak l$ and simple direct summands $\mathfrak l_{r+1}, \ldots, \mathfrak l_s$
play different roles. The direct summands $\mathfrak l_{r+1}, \ldots, \mathfrak l_s$ must act on 
$V_1\oplus\ldots\oplus V_r$ trivially, otherwise there exist no nonzero $\mathfrak l$-compatible multiplications. And 
the toric direct summands are allowed to act nontrivially on $V_1\oplus\ldots\oplus V_r$, 
this does not change the set of nonzero $\mathfrak l$-compatible multiplications.
Generally speaking, it is possible that two $\mathfrak l$-compatible multiplications 
are in the same class up to $L$-conjugation, but not in the same class up to 
conjugation by the semisimple part of $L$.

The central torus $T$ of $L$ acts on the space of all multiplications $V_i\times V_i\to V_i$ ($1\le i\le r$) by 
a character $\chi_i$, i.~e. if $g\in T$, 
the action of $g$ multiplies all structure constants of a multiplication $V_i\times V_i\to V_i$
by $\chi_i(g)$. For the action of the simple subgroup $L_i\subseteq L$ ($1\le i\le r$) such that $\Lie L_i=\mathfrak l_i$, 
there are two possibilities:
\begin{enumerate}
\item\label{goodcase} For every $c_i\in\mathbb C$ and for every $\mathfrak l_i$-compatible multiplication $V_i\times V_i\to V_i$
there exists $g\in L_i$ that multiplies all structure constants by $c_i$.
\item There exists an $\mathfrak l_i$-compatible multiplication such that for only finitely many $c_i\in\mathbb C$ 
there exists $g\in L_i$ that multiplies all structure constants by $c_i$.
\end{enumerate}
Later we will see that case \ref{goodcase} always takes place if $\mathfrak l_i$ is not of type $A_l$.
If case \ref{goodcase} holds for all indices $i$, $1\le i\le r$ such that $T$ acts on $V_i$ nontrivially 
and there exist nonzero $\mathfrak l_i$-compatible 
multiplications on $V_i$, then the classes of multiplications up to
conjugation by the semisimple part of $L$ coincide with the classes up to $L$-conjugation.
Otherwise one should 
take the quotient over the action of $T$ explicitly.
We will say more about that in case of the tautological representation of a group of type $A_l$ in 
Subsection \ref{commalgarises}.

Now we are going to consider irreducible representations of simple groups.
\begin{proposition}\label{shortroot} (see Section \ref{multgeneral})
Let $\mathfrak l$ be a simple Lie algebra, denote its root system by $\Phi$. 
Let $V$ be an irreducible representation of $\mathfrak l$ with 
a highest weight $\lambda$ such that there exist nonzero $\mathfrak l$-compatible multiplications on $V$.

Then $\lambda$ is a fundamental weight. Moreover, suppose that $\lambda$ corresponds to the simple 
root $\alpha_i$. Denote the corresponding simple root in the dual root system $\Phi^\vee$ by $\alpha_i^\vee$. 
Then $\alpha_i^\vee$ occurs in the decomposition of the highest short root of $\Phi^\vee$ into a sum of simple root 
only once (i.~e. with coefficient 1).
\end{proposition}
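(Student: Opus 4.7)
The plan is to reduce to the case where the multiplication $m$ is semi-invariant under a Borel subgroup $B\subseteq L$ (with maximal torus $T\subseteq B$), and then extract the conclusion from elementary root combinatorics. The compatibility conditions \ref{firstcompcond}--\ref{lastcompcond} cut out a closed, conic, $L$-invariant subset of the affine space of bilinear maps $V\otimes V\to V$; Borel's fixed point theorem, applied to its projectivization, produces a $B$-semi-invariant $\mathfrak l$-compatible multiplication, which I take as the representative $m$ in the sequel. Since $V$ must be a nontrivial representation (otherwise condition~\ref{lastcompcond} forces $m=0$) and $\mathfrak l$ is simple, the kernel of $\rho\colon\mathfrak l\to\mathfrak{gl}(V)$ is zero, so $\rho$ is faithful and the relation $\mu_v=\rho(\psi(v))$ uniquely defines a linear map $\psi\colon V\to\mathfrak l$. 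Writing $\chi$ for the $T$-weight of $m$ and differentiating the semi-invariance yields $\psi(V_\mu)\subseteq\mathfrak l_{\mu+\chi}$ for each weight $\mu$ and $\psi(xv)=[x,\psi(v)]$ for all $x\in\mathfrak u^+$, the nilradical of $\Lie B$.

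Next I would pick a weight $\mu$ of $V$ maximal (in the root order) among those with $\psi(V_\mu)\neq 0$, and a vector $v\in V_\mu$ with $\psi(v)\neq 0$. Maximality implies $\psi(e_\alpha v)=0$ for every positive root $\alpha$, so $\mathfrak u^+$-equivariance gives $[e_\alpha,\psi(v)]=0$; hence $\psi(v)$ lies in the centralizer of $\mathfrak u^+$ inside $\mathfrak l$. A standard computation identifies this centralizer with the highest root space $\mathfrak l_\theta$, where $\theta$ is the highest root of $\Phi$, so after rescaling $\psi(v)=e_\theta$ and $\chi=\theta-\mu$. Associativity together with the fact that $2\theta$ is not a root now plays the decisive role: $\mu_v^2=\mu_{vv}=\rho(\psi(vv))$, but $vv\in V_{2\mu+\chi}=V_{\mu+\theta}$ forces $\psi(vv)\in\mathfrak l_{2\theta}=0$, giving $\rho(e_\theta)^2=0$ on $V$. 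Viewing $V$ as a module for the $\mathfrak{sl}_2$-triple attached to $\theta$, this forces every irreducible $\mathfrak{sl}_2$-summand of $V$ to have dimension at most two; equivalently, the largest $\mathfrak{sl}_2$-weight appearing in $V$---which, since $\theta^\vee$ is dominant, equals $\langle\lambda,\theta^\vee\rangle$ and is attained at a highest weight vector---is at most $1$. Combined with $\rho(e_\theta)\neq 0$ (again by faithfulness), this gives $\langle\lambda,\theta^\vee\rangle=1$.

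Finally, writing $\lambda=\sum_i c_i\omega_i$ and $\theta^\vee=\sum_i n_i^\vee\alpha_i^\vee$ with $c_i,n_i^\vee\in\mathbb Z_{\geq 0}$, the equation $\sum_i c_i n_i^\vee=1$ combined with the fact that every simple coroot appears with positive coefficient in the highest coroot of a simple root system forces $\lambda$ to be a single fundamental weight $\omega_i$ and the coefficient of $\alpha_i^\vee$ in $\theta^\vee$ to equal $1$. Since $\theta^\vee$ is the highest short root of $\Phi^\vee$, this is precisely the claim. The step I expect to be the main obstacle is the passage from the highest weight vector $v_\lambda$ to the auxiliary vector $v$ of maximal weight in the support of $\psi$: one must appreciate that this replacement is necessary in principle---$\psi(v_\lambda)$ could a priori vanish---yet that the resulting identity $\rho(e_\theta)^2=0$ is already a statement about all of $V$, so the final inequality $\langle\lambda,\theta^\vee\rangle\leq 1$ genuinely involves the true highest weight $\lambda$, not merely the auxiliary $\mu$.
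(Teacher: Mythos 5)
Your proof is correct, and it reaches the paper's conclusion by a genuinely different route in the one step that matters. Both arguments ultimately rest on the same two facts: that $\rho(e)^2=0$ for a root vector $e$ attached to the highest root $\theta$, and that the resulting bound $\langle\lambda,\theta^\vee\rangle\le 1$ together with the positivity of all coefficients of the highest short root of $\Phi^\vee$ forces $\lambda=\varpi_j$ with $b_j=1$. The difference is in how the square-zero root vector is produced. The paper takes an arbitrary $v$ with $\mu_v\ne 0$, uses nilpotency and associativity to find $x=\varphi(v^k)$ with $\rho(x)\ne 0$ and $\rho(x)^2=0$, and then degenerates $x$ into the minimal nilpotent orbit (the orbit closure of any nonzero nilpotent element contains a highest-root vector, and $\rho(\cdot)^2=0$ is a closed $\Ad$-invariant condition). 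You instead invoke Borel's fixed point theorem on the projectivized cone of compatible multiplications to make $\varphi$ a highest weight vector of $V^*\otimes\mathfrak l$, locate a maximal weight in the support of $\varphi$ to force its image into the centralizer $\mathfrak l_\theta$ of $\mathfrak u^+$, and use $2\theta\notin\Phi$ plus associativity to get $\rho(e_\theta)^2=0$. Your route avoids the orbit-closure fact but essentially imports the machinery the paper only develops later, in Section \ref{multcasebycase}, as preparation for Lemma \ref{weightlatticefailure} (the Borel fixed point reduction and the identity $(\ad u)\circ\varphi=\varphi\circ\rho(u)$ appear there verbatim); the paper's proof of this particular proposition is shorter and self-contained by comparison. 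One small point worth making explicit in your write-up: the conclusion $\langle\lambda,\theta^\vee\rangle\ge 1$ needs the symmetry of $\mathfrak{sl}_2$-weight strings (if the maximal $\mathfrak{sl}_2$-weight were $0$, the whole triple would act trivially, contradicting faithfulness); you gesture at this but it deserves a sentence.
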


This proposition radically restricts the set of pairs $(\mathfrak l, V)$ where nonzero 
$\mathfrak l$-compatible multiplications are possible. Namely, if $\mathfrak l$ is of type $A_l$, 
we have to consider all fundamental representations, if $\mathfrak l$ is of type $B_l$, 
we have to consider the tautological and the spinor representations, if $\mathfrak l$ is 
of type $C_l$, we have to consider all fundamental representations, if $\mathfrak l$ is 
of type $D_l$, we have to consider the tautological representation and two 
half-spinor representations 
(one of them is transformed to the other one by a diagram automorphism of $\mathfrak l$),
if $\mathfrak l$ is of type $E_6$, $E_7$, $F_4$ or $G_2$, we have to consider only the 
representations of minimal dimension. We are going to prove the following theorem:
\begin{theorem}\label{multsclass} (see Section \ref{multcasebycase})
Let $\mathfrak l$ be a simple Lie algebra, $V$ be an irreducible representation of $\mathfrak l$ 
such that there exists a nonzero $\mathfrak l$-compatible multiplication on $V$. Then there are 
exactly two possibilities:
\begin{enumerate}
\item $\mathfrak l$ is of type $A_l$, $V$ is the tautological representation or the dual one. 
Then an $\mathfrak l$-compatible multiplication is any commutative associative multiplication 
such that all multiplication operators are nilpotent.
\item $\mathfrak l$ is of type $C_l$ ($l\ge 2$), $V$ is the tautological representation.
Then multiplications on $V$ are parametrized by symmetric trilinear forms on $V/V_1$, where $V_1\subset V$ 
is a prefixed Lagrangian subspace. See Subsection \ref{spnontrivmult} for an exact description 
of this parametrization.
\end{enumerate}
\end{theorem}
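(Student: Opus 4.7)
The plan is to enumerate all candidates $(\mathfrak l, V)$ that survive Proposition~\ref{shortroot} and, for each, either construct all $\mathfrak l$-compatible multiplications explicitly or prove that none exist. The main tool throughout is that conditions \ref{firstcompcond}--\ref{lastcompcond} force the linear map $w \mapsto \mu_w$ from $V$ into $\mathrm{End}(V)$ to have image in the intersection of the nilpotent cone with $\rho(\mathfrak l)$, to commute on its image, and to satisfy the associativity relation $\mu_{\mu_u v} = \mu_u \mu_v$. So for each candidate I would analyse, via weight decompositions under a maximal torus of $L$, which elements of $\rho(\mathfrak l)\subset \mathrm{End}(V)$ can appear as multiplication operators.

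For the two positive cases the argument is short. If $\mathfrak l = \mathfrak{sl}_{l+1}$ acts on its tautological representation $V = \mathbb C^{l+1}$, then $\rho(\mathfrak l) \subset \mathrm{End}(V)$ is exactly the space of traceless operators; since every nilpotent operator is traceless, condition~\ref{lastcompcond} follows automatically from condition~3, and the remaining conditions impose no extra restriction, so \emph{any} commutative, associative multiplication with nilpotent multiplication operators is $\mathfrak l$-compatible. The dual representation gives an equivalent problem via the diagram automorphism $x \mapsto -x^{T}$. For $\mathfrak{sp}_{2l}$ on its tautological representation, I would fix the symplectic form $\omega$ and a Lagrangian $V_1 \subset V$, show that any $\mathfrak l$-compatible multiplication forces $V_1 \cdot V = 0$, descends to a commutative associative multiplication on $V/V_1$, and then, via $\omega$, corresponds to a symmetric trilinear form on $V/V_1$ (the details being the content of Subsection~\ref{spnontrivmult}); conversely every such trilinear form produces a valid multiplication.

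For the remaining candidates from Proposition~\ref{shortroot} -- the wedge powers $\Lambda^k V$ for $\mathfrak{sl}_{l+1}$ with $1<k<l$, the tautological and spinor representations of $B_l$, the non-tautological fundamental representations of $\mathfrak{sp}_{2l}$, the tautological and half-spinor representations of $D_l$, and the minimal-dimensional representations of $E_6, E_7, F_4, G_2$ -- I would rule out nonzero multiplications by picking a highest weight vector $v_\lambda$, determining the possible shape of $\mu_{v_\lambda} \in \rho(\mathfrak l)$ on weight grounds (it must raise weights by the weight of $v_\lambda$), and then using commutativity of $\mu_{v_\lambda}$ with $\mu_{g\cdot v_\lambda}$ for $g$ in suitably chosen root subgroups of $L$, together with associativity, to force $\mu_{v_\lambda} = 0$; the $L$-action then propagates triviality to all of $V$. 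The main obstacle will be the exceptional and spinor cases, where the weight combinatorics is delicate and no uniform argument applies: one has to exploit rather specific structural features (quadratic forms for $B_l$ and $D_l$, octonionic/Jordan-algebra models for $F_4$ and $E_6$, the $56$-dimensional symplectic structure for $E_7$), and each of these subcases will be treated separately in Section~\ref{multcasebycase}.
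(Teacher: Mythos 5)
Your positive cases ($A_l$ on the tautological representation, $C_l$ on the tautological representation) match the paper's treatment, and your overall plan (enumerate the survivors of Proposition~\ref{shortroot}, then kill each negative case) is the right shape. But the engine you propose for the negative cases has a genuine gap. You want to determine ``the possible shape of $\mu_{v_\lambda}$ on weight grounds (it must raise weights by the weight of $v_\lambda$).'' This is false for a general $\mathfrak l$-compatible multiplication: the linear map $v\mapsto\mu_v$, i.e.\ the structure tensor $c\in V^*\otimes V^*\otimes V$, is not assumed to be $\mathfrak t$-equivariant, so $\mu_{v_\lambda}$ need not be a weight component of $\rho(\mathfrak l)$ at all, let alone one of weight $\lambda$. (Conjugating any compatible multiplication by a generic $g\in L$ produces another compatible multiplication whose $\mu_{v_\lambda}$ is essentially arbitrary inside the relevant orbit.) The same problem undermines your final step ``the $L$-action then propagates triviality to all of $V$'': the set $\{v:\mu_v=0\}$ is not $L$-stable for a fixed multiplication, so vanishing at $v_\lambda$ does not spread.

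What is missing is the reduction that makes weight arguments legitimate. The paper observes that the compatible multiplications with all $\mu_v\in\rho(\mathfrak u_0)$ form a closed $B$-stable cone in $V^*\otimes\mathfrak u_0\subset V^*\otimes R(\mathfrak l)$ (commutativity is linear, associativity quadratic in the structure constants), so by the Borel fixed point theorem one may assume the structure tensor is a \emph{highest weight vector} of some irreducible summand of $V^*\otimes R(\mathfrak l)$. Only then does $\varphi(V_\nu)\subseteq\mathfrak l_{\nu+\kappa}$ hold for the weight $\kappa$ of that vector (note the shift is by $\nu+\kappa$, not by $\nu$), and only then does $\mathfrak u_0\cdot\varphi=0$ give the intertwining relation $(\ad u)\circ\varphi=\varphi\circ\rho(u)$ that propagates vanishing --- from the \emph{lowest} weight vector, since $V$ is generated from $v_{-\lambda^*}$ by $\mathfrak u_0$. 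This reduction, combined with the explicit decomposition of $V^*\otimes R(\mathfrak l)$ into irreducibles (taken from tables), yields the concrete criterion of Lemma~\ref{weightlatticefailure}, which is then checked case by case; two cases resist it and are handled differently (the orthogonal tautological representations by a direct symmetry argument on $c(u,v,w)=\omega(uv,w)$, and $\varpi_{l-2}$ for $A_l$ by Lemma~\ref{tripletensor}). Without some substitute for the fixed-point reduction, your ``weight grounds'' step does not get started.
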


It should be underlined that if an $\mathfrak l$-compatible multiplication exists on an $\mathfrak l$-module
$V$, then the pair $(\mathfrak l, V)$ is listed only once in this theorem, while $\mathfrak l$ can be isomorphic
to several classical Lie algebras (belonging to several different series). More specifically, here is the list of the cases 
where nontrivial $\mathfrak l$-compatible multiplications exist, but they are not listed directly in Theorem
\ref{multsclass}:

\begin{enumerate}
\item $\mathfrak{so}_5$ is an algebra of type $B_2$, and it isomorphic to $\mathfrak{sp}_4$, 
which is an algebra of type $C_2$. This isomorphism identifies the spinor representation of $\mathfrak{so}_5$
with the tautological representation of $\mathfrak{sp}_4$, so nontrivial $\mathfrak{so}_5$-compatible 
multiplications exist on the spinor representation and can be classified using this identification and Theorem \ref{multsclass}.
\item $\mathfrak{so}_6$ is sometimes referred to as an algebra of type $D_3$, and it is isomorphic to 
$\mathfrak{sl}_4$. This isomorphism identifies the two half-spinor representations of $\mathfrak{so}_6$ 
with the tautological representation of $\mathfrak{sl}_4$ and the dual one. Therefore, nontrivial 
$\mathfrak{so}_6$-compatible multiplications exist on its half-spinor representations and are described 
in the same way as $\mathfrak{sl}_4$-compatible multiplications on the tautological representation and on the dual one.
\item $\mathfrak{sp}_2$ is isomorphic to $\mathfrak{sl}_2$, and the tautological representation of 
$\mathfrak{sp}_2$ is isomorphic to the tautological representation of $\mathfrak{sl}_2$. One can use any 
part of Theorem \ref{multsclass} to describe $\mathfrak l$-compatible multiplications.
\item $\mathfrak{so}_3$ is isomorphic to $\mathfrak{sl}_2$, and the spinor representation of $\mathfrak{so}_3$
is isomorphic to the tautological representation of $\mathfrak{sl}_2$. $\mathfrak{so}_3$-compatible
multiplications on the spinor representation are described as $\mathfrak{sl}_2$-compatible multiplications
on the tautological representation of $\mathfrak{sl}_2$.
\item $\mathfrak{so}_4$ is isomorphic to $\mathfrak{sl}_2\oplus\mathfrak{sl}_2$, so it is not a simple algebra. 
However, each half-spinor representation of $\mathfrak{so}_4$ is isomorphic to the tensor product 
of the tautological representations of one of the $\mathfrak{sl}_2$ algebras and the trivial representation of the other one.
Therefore, nontrivial $\mathfrak{so}_4$-compatible multiplications exist and are described as 
$\mathfrak{sl}_2\oplus\mathfrak{sl}_2$-compatible multiplications on the corresponding tensor product.
\end{enumerate}

For generically transitive $(\Ga)^m$-actions on $G/P$ we then obtain the following theorem:
\begin{theorem}\label{actsclass} (see Section \ref{finalclass})
Let $G$ be a simple group, $P\subset G$ be a parabolic subgroup such that $(\Aut (G/P))^\circ=G$, 
$m=\dim (G/P)$. 

Then if $G$ is of type $A_l$ and $P=P_1$ or $P=P_l$, then generically 
transitive actions $(\Ga)^m:(G/P)$ are parametrized by commutative associative 
$m$-dimensional algebras with nilpotent multiplication operators. Otherwise, 
either there is exactly one generically transitive action $(\Ga)^m:(G/P)$ 
up to $G$-conjugation, this happens if and only if
the unipotent radical of $P$ is commutative, or there are no 
generically transitive actions $(\Ga)^m:(G/P)$.
\end{theorem}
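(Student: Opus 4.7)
The plan is to combine all reductions established so far with the multiplication classification, and then to run through the finitely many pairs $(G,P)$ in the table cited from \cite{arj}. If the unipotent radical of $P$ is not commutative, then by \cite[Theorem 1]{arj} the pair $(G,P)$ is absent from the table, and no generically transitive $(\Ga)^m$-action on $G/P$ exists. Assume henceforth that the unipotent radical of $P$ is commutative. Then, combining Propositions \ref{gaactsubalg} and \ref{subalgisunip} with Theorem \ref{actsandmults}, generically transitive actions up to $G$-conjugation correspond to $L$-conjugation classes of $\mathfrak l$-compatible multiplications on the $L$-module $V:=\mathfrak u^-$, where $L$ is the standard Levi subgroup of $P$ and $\mathfrak l=\Lie L$; the zero multiplication is always present and corresponds to the subalgebra $\mathfrak a=\mathfrak u^-$.

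The bulk of the work is then a case analysis on the pairs in the table. For $G=PSL_{l+1}$, $P=P_i$, the module $V\cong\mathbb C^i\otimes(\mathbb C^{l+1-i})^*$ is an outer tensor product. When $2\le i\le l-1$, both simple factors of $[\mathfrak l,\mathfrak l]=\mathfrak{sl}_i\oplus\mathfrak{sl}_{l+1-i}$ act nontrivially on the irreducible $V$, which Proposition \ref{possibledecomp} forbids for any nonzero $\mathfrak l$-compatible multiplication, giving a unique action. When $i\in\{1,l\}$, the semisimple Levi is $\mathfrak{sl}_l$ acting tautologically (or on the dual), and Theorem \ref{multsclass}(1) identifies $\mathfrak l$-compatible multiplications with commutative associative multiplications having nilpotent multiplication operators; since $L\supseteq GL(V)$, $L$-conjugation is algebra isomorphism, yielding the claimed parametrization by $m$-dimensional commutative associative algebras with nilpotent multiplication operators. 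For the remaining pairs in the table --- $(SO_{2l+1},P_1)$, $(PSp_{2l},P_l)$, $(PSO_{2l},P_i)$ with $i\in\{1,l-1,l\}$, $(E_6,P_i)$ with $i\in\{1,6\}$, and $(E_7,P_7)$ --- the semisimple Levi is simple and acts irreducibly on $V$; direct identification gives $V$ as, respectively, the tautological of $B_{l-1}$, the symmetric square $S^2\mathbb C^l$ of $A_{l-1}$, the tautological of $D_{l-1}$ or the exterior square $\wedge^2\mathbb C^l$ of $A_{l-1}$, the half-spinor of $D_5$, and the $27$-dimensional minuscule representation of $E_6$. None of these is the tautological of an $A$-type or a $C$-type algebra --- the only representations allowed by Theorem \ref{multsclass} --- so only the zero multiplication is $\mathfrak l$-compatible and the action is unique.

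The main technical obstacle is the explicit identification of the Levi representation on $\mathfrak u^-$ for each entry of the table, together with careful handling of low-rank isomorphisms of simple Lie algebras (for instance $D_3\cong A_3$ places $PSO_6$ under the type $A$ branch via its alternative realization as $PSL_4$, so that the theorem's phrasing ``$G$ of type $A_l$'' is interpreted correctly). Everything else is a direct application of Proposition \ref{possibledecomp}, Theorem \ref{multsclass}, and the bijection between $L$-conjugation classes of linear maps $\varphi\colon\mathfrak u^-\to\mathfrak l$ and $P$-conjugation classes of subalgebras $\mathfrak a$ promised in the discussion after Proposition \ref{subalgisunip}.
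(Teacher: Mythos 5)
Your overall route is the paper's route: reduce via Propositions \ref{gaactsubalg}, \ref{subalgisunip} and Theorem \ref{actsandmults} to $\mathfrak l$-compatible multiplications on $\mathfrak u^-$, identify the Levi module for each entry of the table from \cite{arj}, and invoke Proposition \ref{possibledecomp} and Theorem \ref{multsclass}. Your identifications of the Levi representations agree with the paper's table, and the non-$A_l$ and $A_l$-with-$1<i<l$ cases are handled exactly as in the paper.

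There is, however, one genuine gap, and it sits precisely in the case that carries the content of the theorem, namely $G$ of type $A_l$ with $P=P_1$ or $P_l$. You invoke ``the bijection between $L$-conjugation classes of linear maps $\varphi\colon\mathfrak u^-\to\mathfrak l$ and $P$-conjugation classes of subalgebras $\mathfrak a$ promised in the discussion after Proposition \ref{subalgisunip}.'' But that discussion only establishes a \emph{surjection}: every $P$-class of admissible subalgebras contains one with $\varphi(\mathfrak u^-)\subseteq\mathfrak l$, hence comes from some $L$-class of multiplications. The paper explicitly warns that ``it could also turn out that several classes of maps up to $L$-conjugation define the same class of subalgebras up to $P$-conjugation'' and defers the proof of injectivity to ``later'' --- and that ``later'' is inside the proof of Theorem \ref{actsclass} itself. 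Concretely, you must show that if $\mathfrak a_1,\mathfrak a_2\subseteq\mathfrak u_0^-$ are two such subalgebras and $(\Ad p)\mathfrak a_1=\mathfrak a_2$ for some $p\in P$, then they are already $L$-conjugate; otherwise two non-isomorphic $m$-dimensional nilpotent commutative associative algebras could yield $G$-conjugate actions and the claimed parametrization would fail. The paper does this by writing $p=gu$ with $g\in L$, $u\in U$ in block-matrix form and observing that for $u\ne 1$ the set $(\Ad u)\mathfrak a_1$ contains matrices with nonzero upper-left entry while $(\Ad g^{-1})\mathfrak a_2$ does not, forcing $u=1$. This injectivity argument (irrelevant in the other cases, where there is at most one class anyway) is missing from your proposal and cannot be obtained by citing the earlier discussion, which merely promises it.
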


\subsection{Acknowledgments}
I thank prof. Ivan Arzhantzev for bringing my attention to this problem and prof. Ernest Vinberg 
for a useful discussion that helped me to simplify the proofs. I also thank Valentina Kiritchenko, 
Sergey Loktev and Lev Sukhanov for a useful discussion on automorphism groups of flag varieties.

\section{Preliminaries}\label{prelim}
If an algebraic group is denoted by a single capital letter, the corresponding small German letter 
denotes its Lie algebra.
We denote the identity element of a group $G$ by $1_G$.

Starting from Section \ref{reduction} we fix a simple algebraic group $G$, a Borel subgroup $B\subset G$ and 
a maximal torus $T\subset B$. These data determine a root system $\Phi\subset \mathfrak t^*$, 
a subset of positive roots $\Phi^+$ and a subset of simple roots $\Delta=\{\alpha_1,\ldots,\alpha_{\rk\mathfrak g}\}$. Denote 
$\Phi^-=\Phi\setminus\Phi^+$. Denote the root subspace of $\mathfrak g$ corresponding to 
a root $\alpha$ by $\mathfrak g_\alpha$. If $\alpha\in\mathfrak t^*$ is not an element of $\Phi$, 
then we denote by $\mathfrak g_\alpha$ the zero subspace of $\mathfrak g$. For every root $\alpha$ choose elements 
$x_\alpha\in\mathfrak g_\alpha$, $y_\alpha\in\mathfrak g_{-\alpha}$ so that 
together with $h_\alpha=[x_\alpha,y_\alpha]$ they form a standard basis of $\mathfrak{sl}_2$.
Suppose that $x_{-\alpha}=y_\alpha$. Unless stated otherwise, if $\alpha$ is a simple root, $\alpha=\alpha_i$, we shortly
denote $x_{\alpha_i}=x_i$, $y_{\alpha_i}=y_i$, and $h_{\alpha_i}=h_i$.

Parabolic subgroups $P$ containing $B$ are parametrized by subsets $I\subseteq \Delta$, 
namely, a subset $I\subseteq \Delta$ corresponds to the parabolic subgroup $P$ such that
$$
\mathfrak p=\mathfrak b\oplus\bigoplus_{\alpha\in\Phi_I}\mathfrak g_\alpha,
$$
where $\Phi_I\subseteq \Phi^-$ denotes the set of the negative roots
whose decomposition into a sum of simple roots \textit{does not} contain
the roots $\alpha_i$, $i\in I$.
In particular, maximal parabolic subgroups correspond to one-element subsets $I$. Denote 
the subgroup corresponding to $\{\alpha_i\}$ by $P_i$. Every parabolic subgroup is 
conjugate to a parabolic subgroup containing $B$.

Given a parabolic subgroup $P$ containing $B$ (in particular, $P=B$), denote by $P^-$
the parabolic subgroup such that
$$
\mathfrak p^-=\Lie P^-=\mathfrak t\oplus\bigoplus_{\alpha:\mathfrak g_\alpha\subset\mathfrak p}
\mathfrak g_{-\alpha}.
$$
Then $L=P\cap P^-$ is a Levi subgroup of $P$. 
We call it the \textit{standard} Levi group of $P$, and we call $\mathfrak l$ the \textit{standard} Levi subalgebra of $\mathfrak p$.
If $P$ corresponds to a subset $I\subseteq \Delta$ 
as described above, then the subgroup $U$ such that
$$
\mathfrak u=\bigoplus_{\alpha:-\alpha\notin\Phi_I\text{ and }\alpha\in\Phi^+}\mathfrak g_\alpha
$$
is the unipotent radical of $P$, and the subgroup $U^-$ whose Lie algebra equals
$$
\mathfrak u^-=\bigoplus_{\alpha\in\Phi^-\setminus\Phi_I} \mathfrak g_\alpha
$$
is the unipotent radical of $P^-$. Denote the unipotent radical of $B$ by $U_0$, $\mathfrak u_0=\Lie U_0$.

Denote the character lattice of $T$ by $\mathfrak X$. 
Let $\mathfrak X^+$ be the subsemigroup
of dominant weights with respect to $B$.
Denote the fundamental weight corresponding to a simple root $\alpha_i$ by $\varpi_i$. 
Denote the highest-weight representation of a Lie algebra $\mathfrak g$ with highest weight $\lambda$ by 
$V_\mathfrak g(\lambda)$ or by $V(\lambda)$ if it is clear which Lie algebra we are talking about.
Denote by $v_\lambda$ a (unique up to multiplication by a scalar) highest-weight vector of this 
representation. If $V$ is a representation of $\mathfrak g$, denote the set of its weights by $\mathfrak X(V)$.

Denote the Grassmannian of $k$-dimensional subspaces in a vector space $V$ by $Gr(k,V)$. Denote 
the identity operator on a vector space $V$ by $\id_V$, and denote the $k\times k$ identity matrix by $\id_k$.
If a vector space $V$ is equipped with 
a multiplication $V\otimes V\to V$, and $v\in V$, we denote by $\mu_v$ the operator of left multiplication by 
$v$, i.~e. $\mu_v\colon V\to V$, $\mu_v w=vw$.

\section{Automorphisms of generalized flag varieties}\label{autgroups}
\begin{lemma}
Let $X$ be a generalized flag variety. Then its automorphism group in terms of 
Definition \ref{myautdef} exists and is unique up to a unique isomorphism.
\end{lemma}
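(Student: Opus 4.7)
The plan is to handle uniqueness and existence separately; uniqueness is a standard universal-property manipulation, while existence requires identifying an appropriate candidate and verifying the defining property. For uniqueness: if $G_1$ and $G_2$ both satisfy Definition \ref{myautdef} on $X$, then each yields, via the other's action on $X$, a canonical algebraic group morphism between them making the two actions compatible. Composing these morphisms in either order gives a self-morphism of $G_1$ or $G_2$ compatible with the identity action, and the uniqueness clause of the defining property, applied to the group's own action, forces each composition to equal the identity. This produces a canonical isomorphism $G_1\simeq G_2$.

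For existence I would take $G$ to be the full scheme-theoretic automorphism group $\underline{\Aut}(X)$, which tautologically represents the functor sending an algebraic group $H$ to the set of algebraic actions of $H$ on $X$. Since $X$ is a generalized flag variety, this scheme is in fact an algebraic group of finite type: its tangent space at the identity is $H^0(X,TX)$, which is finite-dimensional since $X$ is projective, so the identity component is of finite type, and a classical argument (matching the group $\Aut(X)^\circ$ considered by Onishchik and recalled in the introduction) identifies this identity component with a semisimple algebraic group $\widetilde G$ acting transitively on $X$; the component group $\Aut(X)/\Aut(X)^\circ$ is finite because it embeds into the (finite) outer automorphism group of $\widetilde G$, so the entire scheme is of finite type. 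Once this is established, the universal property becomes a tautology: an algebraic action $H\times X\to X$ is, by the representability of $\underline{\Aut}(X)$, the same data as an algebraic group morphism $H\to\underline{\Aut}(X)$, and this morphism is unique by the faithfulness of the functor of points.

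The main obstacle I anticipate is not the formal universal-property manipulation but rather the verification that $\underline{\Aut}(X)$ is a genuine finite-type algebraic group rather than only a group scheme locally of finite type. Finite-dimensionality of $H^0(X,TX)$ comes for free from projectivity, but finiteness of the component group really uses the structure of $X$ as a flag variety, via the identification of the identity component with a semisimple group and the classical finiteness of outer automorphisms of semisimple algebraic groups. With these two inputs in place, the rest of the argument is essentially a translation between the functorial description of the automorphism scheme and the elementwise universal property stated in Definition \ref{myautdef}.
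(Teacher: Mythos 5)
Your uniqueness argument is exactly the one in the paper. The existence half takes a genuinely different route: the paper never invokes representability of the automorphism functor. Instead it uses the fact that $X$ is Fano, so $L=\Lambda^m TX$ is very ample, embeds $X$ canonically into $\PP(V)$ with $V=\Gamma(X,L)^*$, and takes $G$ to be the stabilizer of $\iota(X)$ in $PGL(V)$; it then checks that $G$ acts faithfully and that every algebraic action $H:X$ linearizes on $\Gamma(X,L)$ and hence factors through $G$. Since $G$ is a closed subgroup of a finite-type group, the finiteness of the component group --- which you correctly identify as the crux of your approach --- simply does not arise. Your route buys generality (it would apply to any projective variety whose automorphism scheme is of finite type) at the price of importing Grothendieck's representability theorem plus a structural analysis of $\underline{\Aut}(X)$; the paper's route is more self-contained, needing only that flag varieties are Fano.

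There is, however, a genuine gap in your existence half: the finiteness of the component group. As phrased, you identify $\underline{\Aut}(X)^\circ$ with a semisimple group acting transitively by ``matching with Onishchik,'' but in the paper's logical order the identification of the algebraic $\Aut(X)^\circ$ with Onishchik's $(\Bih X)^\circ$ is carried out in Section \ref{autgroups} \emph{after} this lemma, via GAGA and uniqueness of the Lie transformation group structure, and it presupposes that $\Aut(X)$ already exists as an algebraic group --- so citing it here is circular. The claim can be proved directly, but that requires several inputs you do not supply: (i) $\underline{\Aut}(X)^\circ$ is linear (not automatic for a projective variety; one needs, e.g., Matsumura's theorem together with the vanishing of the Albanese of a rational variety, or a canonical linearization of the anticanonical bundle --- which brings you back to the paper's construction); (ii) it is semisimple (its radical is connected, solvable and normal, has a fixed point on the complete variety $X$ by Borel's theorem, hence acts trivially by transitivity and normality, hence is trivial by faithfulness); (iii) the centralizer of $\underline{\Aut}(X)^\circ$ in $\underline{\Aut}(X)$ is trivial because parabolic subgroups are self-normalizing, so the component group embeds into the outer automorphism group of a semisimple group, which is finite. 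With these supplied your argument closes; without them the existence half is incomplete.
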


\begin{proof}
It is known \cite[Chapter V, Theorem 1.4]{fano} that all generalized flag varieties are Fano varieties, i.~e. 
the highest exterior power of the tangent bundle on $X$ (which we denote by $L=\Lambda^mTX$, $m=\dim X$) 
is ample. Moreover, Theorem 1.4 in \cite[Chapter V]{fano} says that 
$L$ is very ample.
Then $L$ defines 
an embedding $\iota$ of $X$ into a projective space $\PP(V)$, where $V=\Gamma(X,L)^*$. Let $G$ be the subgroup 
of all elements of $PGL(V)$ that preserve $\iota(X)$.

Note first that $\iota(X)$ is not contained in any proper projective subspace of $Y$. The 
contrary would mean that there is a non-zero linear function $v$ on $V$ 
(i.~e. an element of $\Gamma(X,L)$) that vanishes 
at all elements of $V$ defined (up to multiplication by a scalar) by 
points of $X$. In other words, $v$ is a section of $L$ such that at each point $x\in X$ 
the value of $v$ in the fiber over $x$ can be obtained by multiplication of a basis vector 
of this fiber by zero. But then $v$ is the zero section.

Hence, since $X$ is irreducible, every proper projective subspace of $\PP(V)$ intersects $\iota(X)$ by a subvariety 
of smaller dimension. If $g\in G$, $g\ne1_G$ acts on $\iota (X)$ trivially, then $\iota(X)$ is 
contained in the union of the projectivizations of the eigenspaces of $g$, i.~e, $\iota(X)$ is 
a union of subvarieties of smaller dimension, and this is impossible. Therefore, $G$ acts on 
$\iota(X)$ faithfully. We are going to prove that $G=\Aut (X)$.

Given an algebraic automorphism of $X$, its differential is an algebraic vector bundle automorphism 
of $TX$. One checks directly that an algebraic action of an algebraic group $H$ on $X$ gives rise to 
an algebraic action of $H$ on $TX$ (and consequently on $L$) by vector bundle automorphisms. 
Since $\Gamma (X,L)$ is finite dimensional, $H$ also acts algebraically on it, and the map 
$\iota\colon X\to\PP(V)$ is $H$-equivariant by construction. The action $H:V$ is linear by 
construction, and for a linear algebraic action it is clear that it yields an algebraic group morphism
$H\to GL(V)\to PGL(V)$. Since $H$ preserves $\iota(X)$, we have a morphism $f\colon H\to G$.
Since $G$ acts on $X$ faithfully, any morphism of abstract groups $g\colon H\to G$ such that 
for every $x\in X$ and $h\in H$ one has $h\cdot x=g(h)\cdot x$ 
coincides with $f$.

If $G'$ is another automorphism group, the isomorphism between $G$ and $G'$ is established 
in the usual way: there exist unique morphisms $f\colon G\to G'$ and $f'\colon G'\to G$ 
such that for every $x\in X$, $g\in G$ and $g'\in G'$ one has $g\cdot x=f'(g')\cdot x$ and 
$g'\cdot x=f(g)\cdot x$. So $f'(f(g))\cdot x=g\cdot x$, but there exists only one 
morphism $h\colon G\to G$ such that $h(g)\cdot x=g\cdot x$, and both $f'\circ f$ and 
the identity automorphism of $G$ are examples of such $h$, so $f'\circ f$ is the identity 
morphism on $G$. Similarly, $f\circ f'$ is the identity morphism on $G'$.
\end{proof}

The existence of the automorphism group reduces the problem of classification 
of generically transitive actions $(\Ga)^m:X$ to the problem of classification
of the subgroups of $\Aut (X)^\circ$ that are isomorphic to $(\Ga)^m$ and 
act generically transitively on $X$.

The book \cite{onisch} deals with smooth (real and complex) 
manifolds rather than algebraic varieties, 
and a different definition of the automorphism group (or, more exactly, 
of the Lie group structure on the abstract automorphism group) of a smooth manifold 
is used there. Namely, a Lie group structure on an (abstract) subgroup $G$
of the (abstract) group of diffeomorphisms of a manifold $M$ is called \cite[Chapter 1, \S2.5]{onisch}
\textit{Lie transformation group} if the action $G:M$ is smooth in terms 
of this Lie group structure and every Lie group action $\mathbb R:M$ 
such that every $r\in\mathbb R$ acts on $M$ with an automorphism that belongs to $G$ 
gives rise to a \textit{smooth} map $\mathbb R\to G$. It is proved in \cite{onisch140}
that such a Lie group structure is unique if it exists. By \cite[Chapter 3, Theorem 1.1]{onisch39}, 
if $M$ is a complex manifold, the group of all biholomorphic automorphisms of $M$ 
is a Lie transformation group (i.~e. the desired Lie group structure exists).
This Lie group is denoted by $\Bih M$.

These definitions and theorems don't say anything about other groups acting on $M$ 
except $\mathbb R$. So first we are going to show that the automorphism group as it is defined 
in Definition \ref{myautdef} equipped with complex topology is a Lie transformation group.
Smoothness of the action is clear, so consider a smooth (with respect to complex topology 
on $X$) action $\mathbb R:X$ by algebraic automorphisms. Every algebraic automorphism of $X$ leads to 
algebraic automorphisms of $TX$ and $L$ and hence to a linear automorphism 
of $\Gamma(X, L)$. So we have a smooth linear action $\mathbb R:V$ and therefore a smooth 
embedding $\mathbb R\to G$.

Now we use the following theorem from \cite{onisch}:
\begin{theorem}\label{onischmainthm}\cite[Chapter 4, \S15.4, Theorem 2]{onisch}
Let $M=G/P$ be a flag manifold for a connected complex semisimple lie group $G$ acting on $M$ 
faithfully. Let $G=G^{(1)}\times\ldots\times G^{(s)}$ be the factorization of $G$ into a product of 
simple subgroups, and $P^{(i)}=G^{(i)}\cap P$. Then $M$ is $G$-isomorphic to $M^{(1)}\times\ldots\times M^{(s)}$,
where $M^{(i)}=G^{(i)}/P^{(i)}$, and this decomposition gives rise to an isomorphism
$$
(\Bih M)^\circ\simeq(\Bih M^{(1)})^\circ\times\ldots\times(\Bih M^{(s)})^\circ.
$$
If $G$ is simple, then $(\Bih M)^\circ$ is simple, and $G=(\Bih M)^\circ$ except for the following cases:
\begin{enumerate}
\item $G=PSp_{2l}(\mathbb C)$, $P=P_1$, $M=\PP^{2l-1}$, $(\Bih M)^\circ=PSL_{2l}(\mathbb C)$.
\item $G$ is of type $G_2$, $P=P_1$, $M=SO_7(\mathbb C)/P_1$, $(\Bih M)^\circ=SO_7(\mathbb C)$.
\item $G=SO_{2l-1}(\mathbb C)$, $P=P_{l-1}$, $M=PSO_{2l}(\mathbb C)/P_l$, $(\Bih M)^\circ=PSO_{2l}(\mathbb C)$.
\end{enumerate}
\end{theorem}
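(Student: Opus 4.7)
The plan is to proceed in two stages: first reduce to the case where $G$ is simple, and then analyze $(\Bih M)^\circ$ for simple $G$ by computing the space of holomorphic vector fields on $G/P$.

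For the reduction, I would observe that parabolic subgroups of $G^{(1)}\times\ldots\times G^{(s)}$ are precisely products $P^{(1)}\times\ldots\times P^{(s)}$ with each $P^{(i)}$ parabolic in $G^{(i)}$; this follows from the classification of parabolics by subsets of simple roots, together with the fact that the root system of a direct product is the disjoint union of the factor root systems. Thus $M\cong\prod M^{(i)}$ as a $G$-variety. By Bochner--Montgomery one has $\Lie(\Bih N)^\circ\cong H^0(N,TN)$ for any compact complex manifold $N$. Combining the splitting $T(M^{(1)}\times M^{(2)})\cong\pi_1^*TM^{(1)}\oplus\pi_2^*TM^{(2)}$ with the K\"unneth formula and $H^0(M^{(i)},\mathcal{O})=\mathbb{C}$ gives $H^0(M,TM)\cong\bigoplus_i H^0(M^{(i)},TM^{(i)})$; the natural injective morphism $\prod_i(\Bih M^{(i)})^\circ\to(\Bih M)^\circ$ is thus an isomorphism on Lie algebras between connected complex Lie groups, hence an isomorphism.

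In the simple case, the tangent bundle $T(G/P)$ is the homogeneous bundle $G\times_P(\mathfrak{g}/\mathfrak{p})$, and the infinitesimal $G$-action on $G/P$ provides a canonical inclusion $\mathfrak{g}\hookrightarrow H^0(G/P,T(G/P))$. I would compute the right-hand side as a $G$-module via Kostant's version of the Borel--Weil--Bott theorem: after decomposing $\mathfrak{g}/\mathfrak{p}$ into irreducible summands under the standard Levi of $P$, each summand with highest $L$-weight $\nu$ contributes an irreducible $G$-module of highest weight $w(\nu+\rho)-\rho$ (for the unique Weyl group element $w$ making this expression dominant) provided $\nu+\rho$ is regular, and contributes nothing otherwise. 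The adjoint summand $\mathfrak{g}$ arises from the piece corresponding to the lowest root $-\theta$, and a strictly larger dominant weight appears exactly in the three exceptional cases listed. In each such case the additional vector fields integrate to an action of the claimed larger group $\widetilde{G}$, which I would verify by exhibiting the classical embedding $G\subset\widetilde{G}$ and the parabolic $\widetilde{P}\subset\widetilde{G}$ under which $M\cong\widetilde{G}/\widetilde{P}$.

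Simplicity of $(\Bih M)^\circ$ then comes out automatically, since $\widetilde{G}$ is simple with trivial center in each alternative ($PSL_{2l}$, $SO_7$, $PSO_{2l}$ in the exceptional cases, $\widetilde{G}=G$ otherwise). The main obstacle is the case-by-case cohomological bookkeeping: one must go through every Dynkin type and every subset $I\subseteq\Delta$, compute $\mathfrak{g}/\mathfrak{p}$ as an $L$-module, and check for each irreducible summand whether applying Kostant's recipe produces a dominant highest weight strictly greater than $\theta$. Matching the three exceptional outcomes to the explicit geometric identifications $\PP^{2l-1}\cong PSp_{2l}/P_1$, $G_2/P_1\cong SO_7/P_1$ (a smooth $5$-dimensional quadric), and $SO_{2l-1}/P_{l-1}\cong PSO_{2l}/P_l$ then completes the argument.
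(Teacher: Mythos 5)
The paper does not actually prove this statement: Theorem \ref{onischmainthm} is imported verbatim from Onishchik's book, so there is no in-paper argument to measure yours against. Your outline is, in substance, the classical route to this result (essentially Demazure's and Onishchik's): identify $\Lie(\Bih M)^\circ$ with $H^0(M,TM)$ by Bochner--Montgomery, dispose of the product case by the K\"unneth argument (which is fine as written --- injectivity plus an isomorphism on Lie algebras into a connected group does give an isomorphism), and in the simple case compute $H^0(G/P,T(G/P))$ for the homogeneous bundle $G\times_P(\mathfrak g/\mathfrak p)$ via Bott's theorem, checking that anything beyond the adjoint module appears only in the three listed series. That is the accepted strategy, and the reduction half of your argument is complete and correct.

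There are, however, real gaps in the second half. First, $\mathfrak g/\mathfrak p$ is not a semisimple $P$-module: the decomposition into irreducible $L$-modules only induces a filtration of $T(G/P)$ by homogeneous subbundles with irreducible quotients, so Bott's theorem gives an upper bound $\dim H^0(TM)\le\sum_\nu\dim H^0(\mathcal E_\nu)$ rather than a direct-sum decomposition; you must combine this with the lower bound $\mathfrak g\subseteq H^0(TM)$ and, in the exceptional cases, with the explicit larger group acting, to pin down the answer. Second, your statement of Bott's recipe is off for the degree you need: a summand contributes to $H^{\ell(w)}$, so it contributes to $H^0$ only when $\nu+\rho$ is already dominant regular (i.e. $w=e$), not merely regular. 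Third, and most importantly, the decisive step --- running through every Dynkin type and every parabolic and verifying that a dominant contribution strictly larger than the adjoint module occurs exactly for $(PSp_{2l},P_1)$, $(G_2,P_1)$ and $(SO_{2l-1},P_{l-1})$ --- is where the entire content of the theorem lives, and you only assert it. As a plan the proposal is sound; as a proof it is missing its core computation, which is presumably why the paper simply cites Onishchik instead of reproducing it.
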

We use this theorem as follows. Let $G$ be a semisimple algebraic group $G$ whose center is trivial, 
and let $P$ be a parabolic subgroup. We know that the group $\Aut (G/P)$ exists and 
is a Lie transformation group being considered with classical topology. By GAGA theorem, 
all biholomorphic automorphisms of $G/P$ are algebraic, so $\Bih (G/P)$ consists of the same 
automorphisms as the abstract automorphism group. Since the Lie transformation group structure is 
unique, $\Bih (G/P)$ as a Lie group coincides with $\Aut (G/P)$ with classical topology.
Since the notion of a connected component is the same for smooth algebraic varieties in Zariski 
topology and in classical topology, this is also true for $\Bih (G/P)^\circ$ and $\Aut (G/P)^\circ$.

Now decompose $G$ into a product of simple groups 
$G=G^{(1)}\times\ldots\times G^{(s)}$ and set $P^{(i)}=P\cap G^{(i)}$. Similarly, we conclude that
$\Aut (G^{(i)}/P^{(i)})^\circ$ with classical topology equals $\Bih (G^{(i)}/P^{(i)})^\circ$.
Clearly, $\prod_i\Aut (G^{(i)}/P^{(i)})^\circ$ can be embedded into $\Aut (G/P)^\circ$ algebraically, 
and we see now from Theorem \ref{onischmainthm} that this is an isomorphism.

Now consider the case when $G$ is simple. First, Theorem \ref{onischmainthm} states that 
if $(G,P)$ is one of the pairs $(PSp_{2l}, P_1)$, $(\text{group of type $G_2$}, P_1)$, 
$(SO_{2l-1}, P_{l-1})$ (these pairs will be called exceptional in what follows), 
then $G/P$ is isomorphic (respectively) to $\PP^{2l-1}=PSL_{2l}/P_1$, 
$SO_7/P_1$ or $PSO_{2l}/P_l$ as a complex manifold, and hence, by GAGA theorem, as an algebraic 
variety. And if $(G,P)$ is not an exceptional pair (note that $(PSL_{2l},P_1)$, 
$(SO_7,P_1)$ and $(PSO_{2l},P_l)$ are not exceptional), then $\Bih (G/P)^\circ$ coincides 
with the set of automorphisms originating from the left action $G:G$. Arguing as above, 
we see that $\Aut (G/P)^\circ$ is the same set of automorphisms.

Therefore, the classification of the actions of $(\Ga)^m$ on $G/P$, where $m=\dim G/P$
and $G$ is semisimple, is now reduced to the problem of classification of the subgroups 
of $\widetilde G$ isomorphic to $(\Ga)^m$ acting generically transitively on $\widetilde G/\widetilde P$, 
where $\widetilde G$ is simple and the pair $(\widetilde G,\widetilde P)$ is not exceptional. 


\section{Reduction from $(\Ga)^m$-actions to multiplications}
\label{reduction}

\begin{proof}[Proof of Proposition \ref{gaactsubalg}]
By definition of an automorphism group, effective $(\Ga)^m$-actions 
on $G/P$ yield embeddings $(\Ga)^m\to(\Aut(G/P))^\circ=G$, 
and a $G$-conjugation of an action corresponds to a conjugation
of the corresponding subgroup in $G$. After a suitable conjugation 
we may suppose that $(\Ga)^m eP\subseteq G/P$ is the open orbit. 
Denote the image of the embedding $(\Ga)^m\hookrightarrow G$ by $A$.
The orbit $(\Ga)^m eP\subseteq G/P$ is open if and only if the 
subset $AP\subseteq G$ is open, and this is equivalent to 
$\mathfrak a+\mathfrak p=\mathfrak g$. 
Since $m=\dim (G/P)$, 
$\mathfrak a+\mathfrak p=\mathfrak g$ implies $\mathfrak a\cap\mathfrak p=0$.

Clearly, if $p\in P$ and $\mathfrak a+\mathfrak p=\mathfrak g$, then 
$(\Ad p)\mathfrak a+\mathfrak p=\mathfrak g$. It suffices to prove that 
if $g$ is an arbitrary element of $G$ such that 
$(\Ad g)\mathfrak a+\mathfrak p=\mathfrak g$, 
then there exists $p\in P$ such that $(\Ad p)\mathfrak a=(\Ad g)\mathfrak a$.
$\mathfrak a$ defines a point in $Gr(m,\mathfrak g)$, and 
the adjoint action $G:\mathfrak g$ leads to an action $G:Gr(m,\mathfrak g)$, 
which we also denote by $\Ad$. Denote the orbit of $\mathfrak a$ in $Gr(m,\mathfrak g)$ 
under this action by $X$. Clearly, the subspaces defined by the points of $X$ are 
Lie subalgebras. Since $PA$ is an open subset of $G$, the subset 
$(\Ad PA)\mathfrak a\subseteq X$ is also open, and, since $(\Ad A)\mathfrak a=\mathfrak a$, 
we conclude that $(\Ad P)\mathfrak a$ is an open subset of $X$.

Denote $A_1=gAg^{-1}$. Then $\Lie A_1=(\Ad g)\mathfrak a$. Then a similar argument 
using $A_1$ instead of $A$ proves that $(\Ad P)((\Ad g)\mathfrak a)$ is an open subset of $X$.
Since $G$ is connected, $X$ is irreducible, and its open subsets 
$(\Ad P)\mathfrak a$ and $(\Ad P)((\Ad g)\mathfrak a)$ intersect nontrivially, i.~e. there 
exist $p_1,p_2\in P$ such that $(\Ad p_1)\mathfrak a=(\Ad p_2g)\mathfrak a$, so 
$(\Ad p_2^{-1}p_1)\mathfrak a=(\Ad g)\mathfrak a$.
\end{proof}

\begin{proof}[Proof of Proposition \ref{subalgisunip}]
Consider again the point in $Gr(m,\mathfrak g)$ defined by $\mathfrak a$ and the action 
$G:Gr(m,\mathfrak g)$. Again denote the orbit of $\mathfrak a$ under this action by $X$.
We already know that $(\Ad P)\mathfrak a$ is an open subset of $X$.

Consider a maximal unipotent subalgebra $\mathfrak u_1$ containing $\mathfrak a$. Since 
all maximal unipotent subgroups are conjugate, there exists $g\in G$ such that 
$(\Ad g)\mathfrak u_1=\mathfrak u_0^-$. Then $(\Ad g)\mathfrak a\subseteq\mathfrak u_0^-$. 
It is clear from Bruhat decomposition of $G$ that $PU_0^-$ is 
an open subset of $G$, so $(\Ad PU_0^-)((\Ad g)\mathfrak a)$ is an open subset of $X$.
Therefore, the subsets $(\Ad P)\mathfrak a$ and $(\Ad PU_0^-)((\Ad g)\mathfrak a)$ have 
a nonempty intersection, i.~e. there exist $p_1,p_2\in P$, $u_0\in U_0^-$ 
such that $(\Ad p_1)\mathfrak a=(\Ad p_2u_0)(\Ad g)\mathfrak a$. Set 
$p=p_2^{-1}p_1$, then $(\Ad p)\mathfrak a=(\Ad u_0)(\Ad g)\mathfrak a\subseteq \mathfrak u_0^-$
since  $(\Ad g)\mathfrak a\subseteq \mathfrak u_0^-$.
\end{proof}

\begin{proof}[Proof of Theorem \ref{actsandmults}]
First, let $\mathfrak a\subseteq \mathfrak u_0^-$ be a commutative unipotent subalgebra 
satisfying $\mathfrak a\cap \mathfrak p=0$ and $\mathfrak a\oplus\mathfrak p=\mathfrak g$.
Consider the map $\varphi\colon \mathfrak u^-\to\mathfrak l$ described in the Introduction.
Since $\mathfrak l$ is a subalgebra of $\mathfrak p$, and $\mathfrak u^-$ is a commutative 
ideal of $\mathfrak p$, we have $0=[u+\varphi (u),v+\varphi (v)]=[u,\varphi (v)]+[\varphi (u),v]+
[\varphi (u),\varphi (v)]$. Here $[u,\varphi (v)]+[\varphi (u),v]\in\mathfrak u^-$, 
$[\varphi (u),\varphi (v)]\in\mathfrak l$, and since $\mathfrak l\cap \mathfrak u^-=0$, 
we get $[u,\varphi (v)]+[\varphi (u),v]=0$ and $[\varphi (u),\varphi (v)]=0$. 
Hence, $[u,\varphi (v)]=[v,\varphi (u)]$, and the multiplication on 
$\mathfrak u^-$ defined in the Introduction is commutative. Consider one 
more element $w\in\mathfrak u^-$. By Jacobi identity, 
$0=[w,[\varphi (u),\varphi (v)]]=
[[w,\varphi (u)],\varphi (v)]+[\varphi (u),[w,\varphi (v)]]=[\varphi (v),[\varphi (u),w]]-
[\varphi (u),[\varphi (v),w]]$. In terms of multiplication this can be written as 
$v(uw)=u(vw)$,
but we already know that the multiplication is commutative, so $(uw)v=u(wv)$, and the 
multiplication is associative. The possibility to write multiplication operators in the 
form $\ad g|_{\mathfrak u^-}$, where $g\in \mathfrak l$, follows directly from the definitions 
of $\varphi$ and of the multiplication. To prove nilpotency of the multiplication 
operators, recall that by Proposition \ref{subalgisunip}, there exists $p\in P$ 
such that $(\Ad p)\mathfrak a\subseteq \mathfrak u_0^-$. The corresponding linear map $\mathfrak u^-\to \mathfrak l$
for the subalgebra $(\Ad p)\mathfrak a$ can be written as $u\mapsto (\Ad p)\varphi ((\Ad p^{-1})u)$, 
so $(\Ad p)\varphi ((\Ad p^{-1})u)\in\mathfrak u_0^-$ and is a nilpotent element of $\mathfrak g$. 
Since the adjoint action preserves the Jordan--Chevalley decomposition of elements of $\mathfrak g$,
$\varphi ((\Ad p^{-1})u)$ is also a nilpotent element of $\mathfrak g$, so $\ad\varphi ((\Ad p^{-1})u)$
is a nilpotent operator for every $u\in\mathfrak u^-$. 
Since $P$ preserves $\mathfrak u^-$,
$\ad\varphi (u)$ and $(\ad\varphi (u))|_{\mathfrak u^-}$
are nilpotent operators for every $u\in\mathfrak u^-$, 
and the latter is exactly the operator of multiplication by $u$.

%
%

Now suppose that we have an $\mathfrak l$-compatible multiplication on $\mathfrak u^-$.
Then every multiplication operator $\mu_u$ ($u\in\mathfrak u^-$) can be written as $\ad g|_{\mathfrak u^-}$
for some $g\in\mathfrak l$, and we set $\varphi (u)=g$. If $a,b\in\mathbb C$, $u, v\in\mathfrak u$, 
then $\mu_{au+bv}=a\mu_u+b\mu_v=(\ad (a\varphi (u)+b\varphi (v)))|_{\mathfrak u^-}$, and, 
since the representation $\mathfrak l:\mathfrak u^-$ is faithful, 
$\varphi (au+bv)=a\varphi (u)+b\varphi (v)$, so $\varphi$ is linear. 
Since the multiplication is commutative, $[\varphi (u),v]=uv=vu=[\varphi (v),u]$.
Since the multiplication is commutative and associative, every two multiplication
operators commute, so, since the representation is faithful, $[\varphi (u),\varphi(v)]=0$.
Using the commutativity of $\mathfrak u^-$, we conclude that 
$[u+\varphi (u),v+\varphi (v)]=[u,\varphi (v)]+[\varphi (u),v]+
[\varphi (u),\varphi (v)]=0$, and the subspace $\mathfrak a=\{u+\varphi (u)\mid u\in\mathfrak u^-\}$
is in fact a commutative subalgebra.

All elements of $\mathfrak l$ of the form $\varphi (u)$ act in its representation 
$\mathfrak u^-$ by nilpotent operators. 
Moreover, since $[\varphi (u),\varphi (v)]=0$ for all 
$u,v\in\mathfrak u^-$, all elements of the form $\varphi (u)$ form 
a (commutative) unipotent subalgebra of $\mathfrak l$. All maximal unipotent subalgebras are conjugate, 
so there exists an element $l\in L$ such that 
$(\Ad l)\varphi (\mathfrak u^-)\subseteq \mathfrak u_0^-\cap\mathfrak l$.
But then $(\Ad l)\mathfrak a\subseteq (\Ad l)\mathfrak u^-+(\Ad l)\varphi (\mathfrak u^-)=
\mathfrak u^-+(\Ad l)\varphi (\mathfrak u^-)\subseteq \mathfrak u_0^-$, so 
$(\Ad l)\mathfrak a$ is a unipotent subalgebra of $\mathfrak g$, and 
$\mathfrak a$ is also a unipotent subalgebra of $\mathfrak g$.
\end{proof}

\section{General facts about $\mathfrak l$-compatible multiplications}
\label{multgeneral}

In this section we fix a reductive group $L$ and its representation $V$. 
Then $V$ is also a representation of $\mathfrak l$, and we denote the 
corresponding morphism of Lie algebras $\mathfrak l\to\mathfrak{gl}(V)$ 
by $\rho$.

\begin{proof}[Proof of Proposition \ref{possibledecomp}]
First, suppose that 
$\mathfrak l$ is a semisimple algebra and
$V$ is an irreducible representation.
\begin{lemma}\label{possibledecompirred}
Let $\mathfrak l$ be a semisimple algebra, $\mathfrak l=\mathfrak l_1\oplus\ldots\oplus\mathfrak l_s$
be its decomposition into a sum of simple summands, and $V$ be an irreducible representation of 
$\mathfrak l$.
Suppose that there exists 
a nontrivial
$\mathfrak l$-compatible multiplication 
on $V$.

Then there exists an index $k$ such that $\mathfrak l_iV=0$ if $i\ne k$.
\end{lemma}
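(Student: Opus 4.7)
My plan is to exploit the tensor decomposition of the irreducible $\mathfrak l$-module $V$ together with associativity of the multiplication to force every multiplication operator into a single simple summand $\rho(\mathfrak l_k)$, and then to use commutativity to rule out any further nontrivial tensor factor. Since $\mathfrak l=\mathfrak l_1\oplus\ldots\oplus\mathfrak l_s$ is semisimple and $V$ is irreducible, $V\cong V_1\otimes\ldots\otimes V_s$, where each $V_i$ is an irreducible $\mathfrak l_i$-module and the remaining summands act trivially on $V_i$. Let $J=\{i\colon V_i\text{ is nontrivial}\}$; I want to show $|J|\le 1$. Inside $\mathfrak{gl}(V)$ the image $\rho(\mathfrak l)$ is the internal direct sum of the pairwise commuting subalgebras $\rho(\mathfrak l_i)$, each acting only on the $i$-th tensor factor, so every multiplication operator has a unique decomposition $\mu_w=\sum_{i\in J}\rho(x_i^w)$ with $x_i^w\in\mathfrak l_i$, and the maps $\phi_i\colon w\mapsto x_i^w$ are linear.

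Associativity forces $\mu_w\mu_{w'}=\mu_{ww'}\in\rho(\mathfrak l)$. I would decompose $\mathfrak{gl}(V)=\bigotimes_i(\mathbb C\cdot\id_{V_i}\oplus\mathfrak{sl}(V_i))$ into graded pieces indexed by the subsets of factors on which an operator acts nontrivially; then $\rho(\mathfrak l)$ lies in the sum of the singleton pieces. For $i\ne j$ in $J$ the product $\mu_w\mu_{w'}$ contributes to the pair piece sitting in $\mathfrak{sl}(V_i)\otimes\mathfrak{sl}(V_j)$ the element $\rho_i(x_i^w)\otimes\rho_j(x_j^{w'})+\rho_i(x_i^{w'})\otimes\rho_j(x_j^w)$ (using that $\rho(\mathfrak l_i)$ and $\rho(\mathfrak l_j)$ commute), which must vanish since $\mu_{ww'}$ has no pair component. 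Setting $w=w'$ gives $\rho_i(x_i^w)\otimes\rho_j(x_j^w)=0$, and by faithfulness of $\rho_i$ on the simple ideal $\mathfrak l_i$ this forces $x_i^w=0$ or $x_j^w=0$. Hence for each $w$ at most one of the $x_i^w$ ($i\in J$) is nonzero, so $V=\bigcup_{k\in J}V^{(k)}$, where $V^{(k)}=\bigcap_{i\in J,\,i\ne k}\ker\phi_i$ is a linear subspace. A vector space over $\mathbb C$ is not a finite union of proper subspaces, so $V=V^{(k)}$ for some $k\in J$; after relabelling I take $k=1$, giving $\mu_w\in\rho(\mathfrak l_1)$ for all $w$.

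Finally, I would suppose for contradiction that some $j\in J$ with $j\ne 1$ exists, so $V_j$ is nontrivial with $\dim V_j\ge 2$. Writing $V=V_1\otimes W$ with $W=V_2\otimes\ldots\otimes V_s$ gives $\dim W\ge 2$. Since $\mu_w=\rho(x_1^w)=x_1^w\otimes\id_W$, the commutativity identity $\mu_wv=\mu_vw$ becomes $(x_1^w\otimes\id_W)(v)=(x_1^v\otimes\id_W)(w)$. Applied to decomposable $w=w_1\otimes\xi$ and $v=v_1\otimes\eta$ with $\xi,\eta\in W$ linearly independent, this reads $(x_1^wv_1)\otimes\eta=(x_1^vw_1)\otimes\xi$; the two sides lie in distinct one-dimensional subspaces of $V_1\otimes W$, so both must vanish. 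Letting $v_1$ range over $V_1$ forces $x_1^w=0$, and since decomposables span $V$ and $\phi_1$ is linear, $\phi_1\equiv 0$ on all of $V$, contradicting nontriviality of the multiplication. Hence $|J|\le 1$. I expect the main obstacle to be the bookkeeping in the associativity step, specifically verifying that the pair pieces of $\rho(x_i^w)\rho(x_j^{w'})$ in the graded decomposition of $\mathfrak{gl}(V)$ are genuinely transverse to $\rho(\mathfrak l)$ so that they must cancel.
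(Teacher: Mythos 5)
Your proof is correct and takes essentially the same route as the paper's: the same tensor decomposition $V=V_1\otimes\ldots\otimes V_s$, the same use of associativity (the $\mathfrak{sl}(V_i)\otimes\mathfrak{sl}(V_j)$ component of $\mu_w^2=\mu_{w^2}$ must vanish) to confine every multiplication operator to a single $\rho(\mathfrak l_k)$, and the same use of commutativity on decomposable tensors to force the complementary tensor factor to be trivial. Your only deviation is organizational: you conclude the confinement step with the fact that a complex vector space is not a finite union of proper subspaces, whereas the paper fixes $k$ in advance and derives a direct contradiction.
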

\begin{proof}
Denote by $\rho\colon \mathfrak l\to\mathfrak{gl}(V)$
the corresponding morphism of Lie algebras. 
Every irreducible representation of a semisimple Lie algebra can be written 
as a tensor product of irreducible representations 
of its simple summands, so let $V=V_1\otimes\ldots\otimes V_s$, 
where $V_i$ is an irreducible representation of $\mathfrak l_i$, 
be such a decomposition. Denote by $I$ the set of indices $i$ such 
that $V_i$ is nontrivial. It is sufficient to prove that $I$ contains 
exactly 
one element.

Given a vector $v\in V$, denote by $\varphi (v)$ an element of $\bigoplus_{i\in I}\mathfrak l_i$
such that $vw=\rho(\varphi (v))w$ for every $w\in V$. The remaining simple summands of 
$\mathfrak l$ are in the kernel of $\rho$, so such an element $\varphi (v)$ exists. All representations 
$V_i$ are nontrivial and hence faithful, so $V$ is faithful as a representation 
of $\bigoplus_{i\in I}\mathfrak l_i$. Therefore, $\varphi (v)$ is 
uniquely determined and, as we have seen in the proof of Theorem \ref{actsandmults},
this implies that $\varphi$ is a linear map. It cannot be a zero map, otherwise the 
multiplication would be trivial. Choose an index $k\in I$ such that there exists 
$v\in V$ such that the projection (in terms of the decomposition 
$\mathfrak l=\mathfrak l_1\oplus\ldots\oplus\mathfrak l_s$) of $\varphi (v)$ to $\mathfrak l_k$ 
is not equal to zero. Denote 
$\widetilde {\mathfrak l}=\bigoplus_{i\in I\setminus\{k\}}\mathfrak l_i$, $W=\bigotimes_{i\ne k}V_i$.
Then $V_k$ is a faithful irreducible representation of $\mathfrak l_k$ and 
$W$ is 
an
irreducible representation of $\widetilde{\mathfrak l}$. If $W$ is trivial, then we are done. 
So in the sequel suppose that $\dim W>1$, then $W$ is also faithful.
Denote the corresponding homomorphisms of Lie algebras $\mathfrak l_k\to\mathfrak{gl}(V)$ and 
$\widetilde {\mathfrak l}\to\mathfrak{gl}(W)$ by $\rho_{V_k}$ and $\rho_W$, respectively.

We are going to prove that 
$\varphi(V)\subseteq \mathfrak l_k$.
Assume the contrary. Then there exists a vector $v\in V$ such that $\varphi (v)=x+y$, where 
$x\in \mathfrak l_k$, $y\in\widetilde{\mathfrak l}$ and $x\ne 0$, $y\ne 0$.

Consider now an arbitrary vector $w\in V$ and write $\varphi (w)=x'+y'$, where $x'\in\mathfrak l_k$, 
$y'\in\widetilde{\mathfrak l}$. 
Then $\rho(x'+y')=\rho_{V_k}(x')\otimes\id_W+\id_{V_k}\otimes \rho_W (y')$. 
Since $\mathfrak l_k$ and $\widetilde {\mathfrak l}$ are semisimple Lie algebras, $\tr \rho_{V_k}(x')=\tr \rho_W(y')=0$.
We have the following vector space decomposition of $\mathfrak{gl}(V)$:
$$
\mathfrak{gl} (V) = \mathfrak{gl} (V_k)\otimes\mathfrak{gl} (W)=
\langle\id_{V_k}\rangle\otimes\langle\id_{W}\rangle\oplus
\langle\id_{V_k}\rangle\otimes\mathfrak{sl}(W)\oplus
\mathfrak{sl}(V_k)\otimes\langle\id_{W}\rangle\oplus
\mathfrak{sl}(V_k)\otimes\mathfrak{sl}(W),
$$
and we see that the operator $\rho(x'+y')$ is in the sum of the second and the third summand 
of this decomposition. In other words, for every $w\in V$, the operator of 
multiplication by $w$ is an element of
$\langle\id_{V_k}\rangle\otimes\langle\id_{W}\rangle\oplus
\langle\id_{V_k}\rangle\otimes\mathfrak{sl}(W)\subset \mathfrak{gl}(V)$.

In particular, this holds for $w=v^2$. Since the multiplication on $V$ is associative, 
$\mu_{v^2}=(\rho(x+y))^2=(\id_{V_k}\otimes \rho_W(y)+\rho_{V_k}(x)\otimes \id_W)^2=
\id_{V_k}\otimes (\rho_W(y)^2)+(\rho_{V_k}(x)^2)\otimes \id_W+
2\rho_{V_k}(x)\otimes \rho_W(y)$. We know that $x\ne 0$ and $y\ne 0$ and that 
$\rho_{V_k}$ and $\rho_W$ are injective linear maps, so the last summand is not 
zero and is an element of $\mathfrak {sl}(V_k)\otimes \mathfrak {sl}(W)\subset \mathfrak{gl}(V)$, 
and this is a contradiction.

Thus, $\varphi (V)\subseteq \mathfrak l_k$. 
Denote $d=\dim W$. Recall that we have assumed that $d>1$.
Choose a basis $e_1, \ldots, e_d$ of $W$. Then $V$ can be 
decomposed into a sum of $\mathfrak l_k$-invariant subspaces $V'_j=\langle e_j\rangle\otimes V_k$, 
$1\le t\le d$. These subspaces are isomorphic to $V_k$ as $\mathfrak l_k$-representations.
Assume that there exists an index $j$ and a vector $v\in V'_j$ 
such that 
$\mu_v\ne 0$.
Then $\varphi (v)\ne 0$, 
and $\varphi (v)\in\mathfrak l_k$ acts nontrivially on all subspaces $V_{j'}$, $1\le j'\le d$. 
In particular, there exists an index $j'\ne j$ and a vector $w\in V_{j'}$ such 
that $\varphi (v)w\ne 0$. Since $V_{j'}$ is $\mathfrak l_k$-stable, 
$\varphi (v)w\in V_{j'}$. In terms of multiplication this means that 
$vw\ne 0$, $vw\in V_{j'}$. But $wv=vw$, and by a similar argument we can conclude that 
$wv\in V_j$, and this is a contradiction.
\end{proof}

We are ready to prove Proposition \ref{possibledecomp} in the whole generality. Let $V=V_1\oplus\ldots\oplus V_l$
be the decomposition of $V$ into irreducible summands. Choose two indices $i\ne j$, $1\le i,j\le t$ and 
choose arbitrary $v\in V_i$. Since the multiplication on $V$ is $\mathfrak l$-compatible, 
there exists an element $\varphi(v)\in\mathfrak l$ such that 
$\mu_v=\rho(\varphi(v))$.
Similarly, choose arbitrary
$w\in V_j$ and denote by $\varphi(w)$ an element of $\mathfrak l$ such that 
$\mu_w=\rho(\varphi(w))$.
$V_j$ is an $\mathfrak l$-stable subspace of $V$, so
$vw=\rho(\varphi(v))w\in V_j$. On the other hand, $\rho(\mathfrak l)V_i\subseteq V_i$, so $wv=\rho(\varphi(w))v\in V_i$.
But $vw=wv$, hence $vw=0$. Therefore, $V_iV_j=0$ for each pair of indices $i\ne j$, $1\le j\le t$.
Let $r$ be the number of indices $i$ ($1\le i\le t$) such that $V_iV_i\ne 0$.
Without loss of generality, $V_iV_i\ne 0$ for $1\le i\le r$, and $V_iV_i=0$ for $r<i\le t$.

Denote by $T_0$ the center of $L$. Let $\mathfrak t_0=\Lie(T_0)$. Let $v\in V$, then there 
exists an element $\varphi(v)\in \mathfrak l$ such that 
$\rho(\varphi (v))=\mu_v$.
$\mathfrak l$ as a vector space
can be decomposed into
the direct sum of $[\mathfrak l, \mathfrak l]$ and $\mathfrak t_0$. Let $\varphi(v)=x+y$, where 
$x\in[\mathfrak l,\mathfrak l]$, $y\in\mathfrak t_0$. By Schur's lemma, $y$ acts on each $V_i$ as 
a scalar operator. Since $[\mathfrak l,\mathfrak l]$ is a semisimple algebra, $x$ acts on each $V_i$ 
as an operator with trace 0. $\rho(\varphi(v))$ is a nilpotent operator since it is the operator of multiplication by $v$,
so its restriction to each $V$ also has trace 0. Hence, $\rho(y)|_{V_i}=(\rho(\varphi(v))-\rho(x))|_{V_i}$ is a scalar
operator with trace 0, i.~e. $\rho(y)|_{V_i}=0$ for each $i$, so $\rho(y)=0$, and 
$\rho(x)=\mu_v$.
Therefore, for every $v\in V$ there exists an element of $[\mathfrak l,\mathfrak l]$
that acts on $V$ exactly as 
$\mu_v$.
(Previously we only required 
the existence of such an element in $\mathfrak l$.)

Now we can apply Lemma \ref{possibledecompirred} to the algebra $[\mathfrak l,\mathfrak l]$ and to each of the 
representations $V_i$, $1\le i\le r$. Decompose $[\mathfrak l, \mathfrak l]$ into a direct sum of simple subalgebras,
$[\mathfrak l,\mathfrak l]=\mathfrak l_1\oplus\ldots\oplus\mathfrak l_s$. By Lemma \ref{possibledecomp}, 
for each $i$ ($1\le i\le r$) there exists exactly one index $k$ such that $\mathfrak l_k$ acts 
nontrivially on $V_i$. Choose $v\in V_i$ so that 
$\mu_v\ne 0$.
Then there exists $x_k\in \mathfrak l_k$, $x_k\ne 0$, that acts on $V$ as 
$\mu_v$. (Despite our usual agreement, here $x_k$ do not have to be Chevalley generators corresponding to simple roots.)
Choose an index $j$, $1\le j\le t$, $j\ne i$. The action of $\mathfrak l_k$ on $V_j$ is either trivial or faithful.
If it is faithful, $\rho(x_k)V_j\ne 0$, 
$\rho(x_k)=\mu_v$,
and $V_iV_j=0$, 
so this is a contradiction, and $\mathfrak l_k$ acts trivially on $V_j$. In particular, we see that 
if $i\ne j$, $1\le i,j\le r$, and $\mathfrak l_k$ and $\mathfrak l_{k'}$ are the direct summands that 
act nontrivially on $V_i$ and on $V_j$, respectively, then $k\ne k'$. In other words, 
$s\ge r$ and without loss of generality we may assume that the only summand that acts nontrivially 
on $V_i$ ($1\le i\le r$) is $\mathfrak l_i$. The summands $\mathfrak l_j$ with $j>r$ must act 
trivially on all subrepresentations $V_i$ with $1\le i\le r$, and they may act arbitrarily on subrepresentations
$V_i$ with $r<i\le t$.
\end{proof}

\begin{proof}[Proof of Proposition \ref{shortroot}]
Choose an element $v\in V$ such that $\mu_v\ne 0$. Since $\mu_v$ is a nilpotent operator,
there exists $k\in\mathbb N$ such that $(\mu_v)^k\ne0$, but $(\mu_v)^{2k}=0$. The multiplication is associative, so 
$(\mu_v)^k=\mu_{v^k}$. Hence, there exists $x\in \mathfrak l$ such that $\rho(x)\ne 0$, but $\rho(x)^2=0$.

Until the end of the proof, we use for $\mathfrak l$ the notation for the root system, subalgebras, $\mathfrak{sl}_2$-triples, and the fundamental weights
that we have introduced in Section \ref{prelim} for the Lie algebra of an arbitrary simple group $G$.
Denote the highest root of $\Phi$ by $\alpha$. $x$ is a nilpotent element of $\mathfrak l$, therefore its orbit closure 
contains $y_\alpha$. Hence, $\rho(y_\alpha)^2=0$. In particular, if we decompose $V$ into 
a direct sum of irreducible representations of $\mathfrak{sl}_2=\langle x_\alpha,y_\alpha,h_\alpha\rangle$, then 
the dimension of any direct summand is at most 2. So, the only possible eigenvalues of $\rho(h_\alpha)$ are $-1$, 0, and 1.
In particular, $\lambda(h_\alpha)$ is the eigenvalue of $\rho(h_\alpha)$ corresponding to the highest weight subspace 
of $V$, so $\lambda(h_\alpha)$ can be equal to 1 or 0. Write $\lambda=\sum a_i\varpi_i$. Note that all 
coefficients $a_i$ cannot vanish simultaneously, otherwise $V$ is a trivial representation, $\rho(\mathfrak l)=0$, and 
the multiplication on $V$ has to be trivial.

Fix an invariant scalar multiplication $(\cdot,\cdot)$ on $\mathfrak l$. It identifies $\mathfrak t$ and $\mathfrak t^*$, 
and if $\beta\in \Phi$, then $h_\beta$ is identified with
$$
\frac{2\beta}{(\beta,\beta)}.
$$
So, all vectors $h_\beta$ (for all roots $\beta\in \Phi$) form a root system dual to $\Phi$ in $\mathfrak t$. The set of vectors 
$h_\beta$ for simple positive roots $\beta$ can be chosen as a simple root set for this dual root system.
After this choice, $h_\alpha$ becomes the highest short root of this system. Write $h_\alpha=\sum b_jh_{\alpha_j}$. 
All $b_i$ are positive integers (see \cite[Section 12.2, Table 2]{humps}). We have 
$\lambda(h_\alpha)=\sum a_i\varpi_i(h_\alpha)\le 1$. Recall that by the definition of a fundamental weight, 
$\varpi_i (h_{\alpha_j})=\delta_{ij}$, so $\lambda(h_\alpha)=\sum a_ib_i$.
Therefore, exactly one of the coefficients $a_i$ is nonzero, this coefficient $a_j$ must be 1 (i.~e. $\lambda=\varpi_j$), 
and the index $j$ must satisfy $b_j=1$.
\end{proof}

\section{Existence of $\mathfrak l$-compatible multiplications}
\label{multcasebycase}
In this section, $\mathfrak l=\Lie L$ is a simple Lie algebra, and we use for it the notation we introduced in Section \ref{prelim}
for an arbitrary simple Lie algebra $\mathfrak g$. Let $V$ be an irreducible representation of $L$ satisfying the conditions of 
Proposition \ref{shortroot}. Denote the corresponding morphism of Lie algebras $\mathfrak l\to \mathfrak{gl}(V)$ by $\rho$.

When we prove that for a particular simple algebra and its irreducible representation, nontrivial $\mathfrak l$-compatible 
multiplications do not exist, we will use one of the following two approaches.

\textit{First,} any multiplication $V\times V\to V$ is determined by its structure constant tensor, which is an element 
of $V^*\otimes V^*\otimes V$. A multiplication is commutative if the structure constant tensor belongs to 
$S^2(V^*)\otimes V\subseteq V^*\otimes V^*\otimes V$. Denote by $R(\mathfrak l)$ the algebra $\mathfrak l$ understood as 
the adjoint representation of $\mathfrak l$. Then $\rho\colon R(\mathfrak l)\to \mathfrak{gl}(V)=V^*\otimes V$ is an $\mathfrak l$-equivariant
homomorphism of representations. Hence, $V^*\otimes V^*\otimes V$ contains a subrepresentation isomorphic to $V^*\otimes R(\mathfrak l)$.
The condition requiring that each (left) multiplication operator on $V$ coincide with an operator of the form $\rho(x)$ ($x\in\mathfrak l$)
means in these terms that the structure constant tensor is an element of this subrepresentation.
Therefore, if the subspaces $S^2(V^*)\otimes V$ and $V^*\otimes R(\mathfrak l)$ intersect trivially in $V^*\otimes V^*\otimes V$,
then there are no nontrivial $\mathfrak l$-compatible multiplications on $V$.

To prove that these subspaces intersect trivially, one can first decompose $V^*\otimes R(\mathfrak l)$ 
into a sum of irreducible subrepresentations and then check that highest weight vectors of
all these irreducible subrepresentations are outside $S^2(V^*)\otimes V$. If this is true, then 
the whole subrepresentations intersect $S^2(V^*)\otimes V$ trivially. If, additionally, 
there are no isomorphic representations among them, then by Schur's lemma, 
$V^*\otimes R(\mathfrak l)\cap S^2(V^*)\otimes V=0$. In other words, we have the following lemma:

\begin{lemma}\label{tripletensor}
Decompose $V^*\otimes R(\mathfrak l)$ into 
a sum of irreducible representations. Suppose that there are no isomorphic subrepresentations 
among them. Let $v_i$ be highest weight vectors of these subrepresentations, and let 
$w_i$ be their images under the embedding 
$V^*\otimes R(\mathfrak l)\hookrightarrow V^*\otimes V^*\otimes V$ described above.
If $w_i\notin S^2(V^*)\otimes V\subseteq V^*\otimes V^*\otimes V$ for all $i$, then 
there are no nontrivial $\mathfrak l$-compatible multiplications on $V$.\qed
\end{lemma}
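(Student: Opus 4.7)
My plan is to convert both the commutativity condition and the $\mathfrak l$-linearity condition on a multiplication $V \otimes V \to V$ into the statement that its structure constant tensor lies simultaneously in two specific $\mathfrak l$-submodules of $V^* \otimes V^* \otimes V$, and then to invoke Schur's lemma together with the multiplicity-free hypothesis to rule out a nonzero intersection.

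First I would make the translation explicit. A commutative multiplication is the same data as an element $c \in S^2(V^*) \otimes V$, with $\mu_w$ recovered by contracting $c$ with $w$ in the first factor. Since $\mathfrak l$ is simple and acts nontrivially on $V$ (otherwise no nonzero $\mathfrak l$-compatible multiplication can exist), the homomorphism $\rho\colon R(\mathfrak l) \to \mathfrak{gl}(V) = V^* \otimes V$ is injective, so $\id_{V^*} \otimes \rho$ realizes $V^* \otimes R(\mathfrak l)$ as an $\mathfrak l$-submodule of $V^* \otimes V^* \otimes V$. The condition that every $\mu_w$ equal $\rho(x)$ for some $x \in \mathfrak l$ is precisely the condition that $c$ lie in this submodule. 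Hence the existence of a nonzero $\mathfrak l$-compatible multiplication is equivalent to
$$(S^2(V^*) \otimes V) \cap (V^* \otimes R(\mathfrak l)) \ne 0$$
inside $V^* \otimes V^* \otimes V$.

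Next I would show that this intersection vanishes under the hypothesis. Both summands are $\mathfrak l$-submodules, so their intersection $I$ is itself an $\mathfrak l$-submodule of $V^* \otimes R(\mathfrak l)$. By complete reducibility (valid since $\mathfrak l$ is semisimple) together with the multiplicity-free assumption, Schur's lemma forces every nonzero $\mathfrak l$-submodule of $V^* \otimes R(\mathfrak l)$ to be the direct sum of some subfamily of the fixed irreducible summands. So if $I \ne 0$, then $I$ contains one of these summands in full, and in particular contains its highest weight vector; under the embedding $\id_{V^*} \otimes \rho$, this produces some $w_i$ sitting inside $S^2(V^*) \otimes V$, contradicting the hypothesis.

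The argument is entirely formal; the only step requiring care is the multiplicity-free Schur argument identifying every irreducible submodule of $V^* \otimes R(\mathfrak l)$ with one of the fixed summands. I do not anticipate any genuine obstacle — the real work of applying this lemma in later sections will lie in actually decomposing $V^* \otimes R(\mathfrak l)$ for each pair $(\mathfrak l, V)$ and verifying, for each highest weight vector, that it fails to lie in $S^2(V^*) \otimes V$.
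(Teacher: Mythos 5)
Your proof is correct and follows essentially the same route as the paper's, which consists of exactly this translation into structure-constant tensors (commutativity $\Leftrightarrow$ membership in $S^2(V^*)\otimes V$, the $\mu_w=\rho(x)$ condition $\Leftrightarrow$ membership in the image of $V^*\otimes R(\mathfrak l)$) followed by the multiplicity-free Schur argument. The only nitpick is that the existence of a nonzero $\mathfrak l$-compatible multiplication merely \emph{implies} (rather than is equivalent to) a nonzero intersection, since compatibility also requires associativity and nilpotency of the multiplication operators; but this is harmless, as you only use the direction you actually need.
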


To test whether a vector $w_i$ is an element of $S^2(V^*)\otimes V$, one can consider the canonically corresponding 
map $V^*\to V^*\otimes V^*$. $w_i\in S^2(V^*)\otimes V$ if an only if the image of this map is a subspace of 
$S^2(V^*)$.

\textit{Second,} one can argue as follows. Since the representation $\mathfrak l\colon V$ is faithful, 
given a vector $v\in V$, there exists exactly one element $x\in\mathfrak l$ such that $\rho(x)=\mu_v$.
As in the second part of the proof of Theorem \ref{actsandmults}, we denote this $x$ by $\varphi(v)$.
Again, 
for all $v,w\in V$, $a,b\in \mathbb C$ we have 
$\rho(\varphi (av+bw))=\mu_{av+bw}=a\mu_v+b\mu_w=a\rho(\varphi(v))+b\rho(\varphi(w))$, and
since the representation is faithful, $\varphi(av+bw)=a\varphi(v)+b\varphi(w)$, so $\varphi$ is a 
linear map. Since the multiplication is commutative and associative, $\varphi(V)$ is a commutative 
subalgebra in $\mathfrak l$, and since all multiplication operators are nilpotent, 
$\varphi(V)$ is a unipotent subalgebra. After a suitable conjugation by an element of $L$ 
we may suppose that $\varphi(V)$ is a subalgebra of a prefixed maximal unipotent subalgebra. We will suppose that 
$$
\varphi(V)\subseteq \mathfrak u_0=\bigoplus_{\alpha\in\Phi^+}\mathfrak l_\alpha.
$$

On the other hand, given any linear map $\varphi\colon V\to \mathfrak u_0$, one can define 
a multiplication on $V$ by $vw=\rho(\varphi (v))w$. Choose bases in $V$ and in $\mathfrak u_0$,
then linear maps between $V$ and $\mathfrak u_0$ are determined by $(\dim V)\times (\dim\mathfrak u_0)$-matrices.
$\rho|_{\mathfrak u_0}$ can also be written as a (fixed) element of $(\mathfrak u_0)^*\otimes V^*\otimes V$.
A multiplication is commutative if and only if $\rho(\varphi(v))w=\rho(\varphi(w))v$ for all $v,w\in V$. This equation is bilinear 
in $v$ and $w$, so it is sufficient to satisfy it for $v$ and $w$ being elements of the basis of $V$ we have chosen.
And for fixed $v$ and $w$ this equation can be seen as a linear equation in the 
entries
of the matrix defining $\varphi$.
A multiplication is associative if and only if 
$\rho(\varphi (\rho(\varphi (u))v))w=\rho(\varphi (u))\rho(\varphi (v)) w$ for all $u,v,w\in V$.
Again, this equation is trilinear in $u,v,w$, and if $u,v,w$ are fixed, this is a homogeneous equation
of degree 2 in the coefficients of the matrix defining $\varphi$. All operators of the form $\rho(x)$ with $x\in\mathfrak u_0$ 
are nilpotent. Therefore, we have identified the set of $\mathfrak l$-compatible multiplications on $V$ 
such that $\mu_v\in\rho(\mathfrak u_0)$ for all $v\in V$ with a closed cone in $V^*\otimes \mathfrak u_0$.
Denote this cone by $X$.

The normalizer of $\mathfrak u_0$ in $L$ (denote it by $B$) acts canonically on $V^*\otimes \mathfrak u_0$. The identification 
between the space of structure constant tensors of multiplications on $V$ such that all (left) multiplication operators belong 
to $\rho(\mathfrak u_0)$ and $V^*\otimes \mathfrak u_0$ described above is equivariant under this action.
The conditions in the definition of $\mathfrak l$-compatibility are $\mathfrak l$-invariant, therefore 
$B$ preserves $X$. Hence, $B$ acts on the projectivization of $X$ in $\mathbf P(V^*\otimes \mathfrak u_0)$.
$B$ is a Borel subgroup of $L$, so its action on a projective 
variety always has a fixed point, in particular, there is a $B$-fixed point in $X$.
So, $X$ contains a highest weight vector for an irreducible subrepresentation of the representation of $L$ in $V^*\otimes R(\mathfrak l)$.
Therefore, if there exists a nontrivial $\mathfrak l$-compatible multiplication on $V$, then 
there exists a nontrivial $\mathfrak l$-compatible multiplication on $V$ such that 
the map $\varphi\colon V\to \mathfrak l$ constructed above maps $V$ to $\mathfrak u_0$ 
and is a highest weight vector in an irreducible subrepresentation of the $\mathfrak l$-module $V^*\otimes R(\mathfrak l)$. In the sequel we suppose 
that $\varphi$ satisfies these two conditions. 
Denote the weight of $\varphi$ in $V^*\otimes R(\mathfrak l)$
by $\kappa$.

Let $-\lambda^*$ be the lowest weight 
of $V$, $v_{-\lambda^*}\in V$ be a lowest weight vector. 
Then $\varphi(v_{-\lambda^*})\in\mathfrak l_\gamma$, where $\gamma=\kappa-\lambda^*$.

Suppose first that $\varphi(v_{-\lambda^*})=0$. Let us prove that in this case $\varphi=0$.
Indeed, $\varphi$ is a highest weight vector in $V^*\otimes\mathfrak l$, so $\mathfrak u_0\cdot \varphi=0$.
This means that for every $u\in\mathfrak u_0$ and for every $v\in V$, we have $-\varphi(\rho(u)v)+(\ad u)(\varphi(v))=0$.
In other words, we have the following equality of linear maps from $V$ to $\mathfrak l$: $(\ad u)\circ \varphi=\varphi\circ\rho (u)$.
As a vector space, $V$ is generated by the images of $v_{-\lambda^*}$ under arbitrary products of operators 
of the form $\rho(u)$, where $u\in \mathfrak u_0$. Then $\varphi(V)$ is generated by the images of 
$\varphi (v_{-\lambda^*})$ under products of operators of the form $\ad(u)$, $u\in \mathfrak u_0$. 
But $\varphi (v_{-\lambda^*})=0$, so $\varphi(V)=0$, i.~e. $\varphi=0$.

Now suppose that $\varphi(v_{-\lambda^*})\ne 0$. 
Then $\gamma\in\Phi\cup\{0\}$. Moreover, in fact $\gamma\in\Phi^+$ since we have supposed that $\varphi(V)\subseteq\mathfrak u_0$.
Under the assumptions we made, we prove the following lemma.

\begin{lemma}\label{weightlatticefailure}
Let $\mathfrak l$ be a simple Lie algebra, $\lambda\in\mathfrak X^+$ be a dominant weight and $V=V_{\mathfrak l}(\lambda)$. Suppose that there exists a nontrivial 
$\mathfrak l$-compatible multiplication on $V$. 
Then there exists $\gamma\in \Phi^+$ such that 
\begin{enumerate}
\item $\gamma+\lambda^*$ is the highest weight
of an irreducible subrepresentation of $V^*\otimes R(\mathfrak l)$.
\item There exist \textbf{no} weights $\nu\in\mathfrak X(V)$ such that $\gamma+\nu\in\mathfrak X(V)$ and $\nu+\lambda^*+\gamma\notin\Phi^+$.
\end{enumerate}
\end{lemma}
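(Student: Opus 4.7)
The plan is to combine the description of $\varphi$ as a highest weight vector with commutativity of the multiplication and classical $\mathfrak{sl}_2$-representation theory for the triple attached to $\gamma$. Recall from the setup immediately preceding the lemma that $\varphi\colon V\to\mathfrak u_0$ is a nonzero linear map which transforms as a highest weight vector of weight $\kappa$ in some irreducible $\mathfrak l$-subrepresentation of $V^*\otimes R(\mathfrak l)$, and that $\varphi(v_{-\lambda^*})=c\,x_\gamma$ for some nonzero $c\in\mathbb C$ with $\gamma=\kappa-\lambda^*\in\Phi^+$. Assertion~1 of the lemma is then immediate: $\gamma+\lambda^*=\kappa$ is by construction the highest weight of an irreducible subrepresentation of $V^*\otimes R(\mathfrak l)$.

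For assertion~2, I argue by contradiction. Suppose there exists $\nu\in\mathfrak X(V)$ with $\nu+\gamma\in\mathfrak X(V)$ and $\nu+\lambda^*+\gamma\notin\Phi^+$. Viewing $\varphi$ as an element of $V^*\otimes\mathfrak l$ of weight $\kappa=\gamma+\lambda^*$, for every $v_\nu\in V_\nu$ the image $\varphi(v_\nu)\in\mathfrak l$ has $\mathfrak t$-weight $\nu+\kappa=\nu+\lambda^*+\gamma$. But $\varphi(V)\subseteq\mathfrak u_0=\bigoplus_{\alpha\in\Phi^+}\mathfrak l_\alpha$ contains no nonzero vector whose weight lies outside $\Phi^+$, so our hypothesis forces $\varphi(v_\nu)=0$ for every $v_\nu\in V_\nu$.

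Applying commutativity of the multiplication to the pair $(v_\nu,v_{-\lambda^*})$ yields
$$
0=\rho(\varphi(v_\nu))\,v_{-\lambda^*}=\rho(\varphi(v_{-\lambda^*}))\,v_\nu=c\,\rho(x_\gamma)\,v_\nu,
$$
so, since $c\ne 0$, the operator $\rho(x_\gamma)$ annihilates the entire weight space $V_\nu$. To finish I restrict $V$ to the $\mathfrak{sl}_2$-triple $\langle x_\gamma,y_\gamma,h_\gamma\rangle$: the subspace $\bigoplus_{k\in\mathbb Z}V_{\nu+k\gamma}$ is a finite-dimensional $\mathfrak{sl}_2$-submodule whose weight spaces at $h_\gamma$-eigenvalues $\langle\nu,\gamma^\vee\rangle$ and $\langle\nu,\gamma^\vee\rangle+2$, namely $V_\nu$ and $V_{\nu+\gamma}$, are both nonzero, and standard $\mathfrak{sl}_2$-theory then forces $\rho(x_\gamma)V_\nu\ne 0$, the desired contradiction.

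The main obstacle is this closing $\mathfrak{sl}_2$-fact: for a finite-dimensional $\mathfrak{sl}_2$-module $W$, nonvanishing of both $W_\beta$ and $W_{\beta+2}$ implies $xW_\beta\ne 0$. Decomposing $W$ into simple summands $V(m_i)$, any summand large enough to realize whichever of $\beta,\beta+2$ has larger absolute value automatically realizes the other weight as well, and on such a summand $V(m_i)_\beta$ lies strictly below the top, so $x$ acts on it nontrivially.
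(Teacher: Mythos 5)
Your proof is correct and follows essentially the same route as the paper: use the weight of $\varphi$ to force $\varphi(V_\nu)=0$, play this off against commutativity applied to $v_\nu$ and $v_{-\lambda^*}$, and invoke $\mathfrak{sl}_2$-theory for the triple attached to $\gamma$ to see that $\rho(x_\gamma)$ cannot annihilate all of $V_\nu$ when $V_{\nu+\gamma}\ne 0$. Your closing $\mathfrak{sl}_2$ argument is in fact spelled out more carefully than the paper's brief assertion about the kernel of $\rho(\mathfrak l_\gamma)|_{V_\nu}$, but the underlying idea is identical.
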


\begin{proof}
Assume the contrary, i.~e. assume that there exists a weight $\nu\in\mathfrak X(V)$ such that 
$\gamma+\nu\in\mathfrak X(V)$ and $\nu+\lambda^*+\gamma\notin\Phi^+$.
Denote the corresponding weight space by $V_\nu\subseteq V$.
Since $\gamma\in\Phi^+$ is a positive root such that $\gamma+\nu\in\mathfrak X(V)$, 
$\mathfrak{sl}_2$ representation theory implies that
$\ker(\rho(\mathfrak l_\gamma)|_{V_\nu})$ is a subspace of 
codimension 1 in $V_\nu$. 
Choose 
an arbitrary vector $w_\nu\in V_\nu$ outside this kernel. 

$\varphi$ is a vector of weight $\kappa$ in $V^*\otimes\mathfrak l$, so $\varphi(V_\nu)\subseteq \mathfrak l_{\nu+\kappa}=
\mathfrak l_{\nu+\lambda^*+\gamma}$. But $\nu+\lambda^*+\gamma\notin\Phi^+$, so $\varphi(V_\nu)=0$.
In particular, $w_\nu v_{-\lambda^*}=\rho(\varphi (w_\nu))v_{-\lambda^*}=0$.
On the other hand, $w_\nu v_{-\lambda^*}=\rho(\varphi (v_{-\lambda^*}))w_\nu
\ne 0$ according to our choice
of $w_\nu$. This is a contradiction.
\end{proof}

Now we are going to consider types of simple Lie algebras 
and the corresponding fundamental weights satisfying Proposition \ref{shortroot}
case by case. If there exists a diagram automorphism of a Lie algebra that interchanges two fundamental weights, 
we consider only one of them.

\subsection{Algebra $\mathfrak l$ of type $A_l$}

It is sufficient to consider fundamental weights $\varpi_1$, $\varpi_{l-2}$, and $\varpi_p$, where $2<p\le \lceil l/2\rceil$.

We need an explicit description for a root system of type $A_l$.
Consider a Euclidean space $E$ with an orthonormal basis $\{\widetilde{\varepsilon_i}\}$
($1\le i\le l+1$), its subspace
$\langle\widetilde{\varepsilon_1}+\ldots+\widetilde{\varepsilon_{l+1}}\rangle$, 
and the orthogonal complement to this subspace. Denote the 
orthogonal
projection $E\to \langle\widetilde{\varepsilon_1}+\ldots+\widetilde{\varepsilon_{l+1}}\rangle^\bot$
by $q$. 
Then vectors $\varepsilon_i=q(\widetilde{\varepsilon_i})$ satisfy $(\varepsilon_i,\varepsilon_i)=l/(l+1)$ and $(\varepsilon_i,\varepsilon_j)=-1/(l+1)$.
We construct a root system of type $A_l$ as follows. 
$\Phi=\{\varepsilon_i-\varepsilon_j\mid 1\le i,j\le l, i\ne j\}$.
For a set of positive roots we can take 
$\Phi^+=\{\varepsilon_i-\varepsilon_j\mid 1\le i<j\le l\}$, 
then $\Delta=\{\varepsilon_i-\varepsilon_{i+1}\mid 1\le i\le l\}$.
The corresponding fundamental weights can be expressed as
$\varpi_i=\varepsilon_1+\ldots+\varepsilon_i$ ($1\le i\le l$).

\subsubsection{$\lambda=\varpi_1$}
\label{commalgarises}
Let $SL_{l+1}$ act in its tautological representation $V$ and preserve a highest degree skew-symmetric form $\omega\ne 0$ on $V$.
A nilpotent operator on $V$ always has trace 0, so 
the only essential conditions a multiplication on $V$ should satisfy to be $\mathfrak l$-compatible are commutativity, associativity, 
and multiplication operator nilpotency. To classify $\mathfrak l$-compatible multiplications up to the action of $SL_{l+1}$,
we have to consider two cases.

Case 1. The action of $SL_{l+1}$ enables one to multiply the structure constant tensor of the multiplication in question
by any complex number. Then the $SL_{l+1}$-orbit of this multiplication coincides with its $GL_{l+1}$-orbit, since the 
central torus of $GL_{l+1}$ can only multiply the structure constant tensor by a scalar. These multiplications 
are in one-to-one correspondence with isomorphism classes of commutative associative $l+1$-dimensional algebras 
with all multiplication operators being nilpotent.

Case 2. Using the action of $SL_{l+1}$, one can only multiply the structure constant tensor of the multiplication in question 
by finitely many complex numbers. Then the $GL_{l+1}$-orbit of this multiplication consists of infinitely many 
$SL_{l+1}$-orbits, which
can be parametrized as follows.
Given an associative commutative $l+1$-dimensional algebra $A$ with all multiplication operators being nilpotent, 
choose a highest degree skew-symmetric form $\nu$ on $A$ and identify $A$ with the vector space $V$ so that $\nu$ is identified 
with $\omega$. This condition does not determine an isomorphism between $A$ and $V$ uniquely, and possible isomorphisms
differ exactly by the action of elements of $SL_{l+1}$. Therefore, in this case multiplications are in one-to-one correspondence 
with isomorphism classes of pairs of a commutative associative $l+1$-dimensional algebra $A$ with nilpotent multiplication 
operators and a nonzero highest degree skew-symmetric form on $A$, where an isomorphism preserves the form as well as the multiplication.

Observe that the possibility to multiply the structure constant tensor by any complex number depends only on the isomorphism class 
of $l+1$-dimensional algebras, it does not depend on the isomorphism we choose between an algebra and the $SL_{l+1}$-module $V$.
So, given such an isomorphism class of algebras, one can determine whether Case 1 or Case 2 takes place and whether it is necessary 
to choose a highest degree skew-symmetric form on an algebra from this class.

Now we are ready to give a detailed description of $L$-isomorphism classes of multiplications
in cases when $L$ is a reductive but not necessarily simple algebraic group, and the decomposition of 
$[\mathfrak l,\mathfrak l]$ into a sum of simple summands contains several components of type $A$.
To define a multiplication on $V$ up to the $L$-action, we first choose the irreducible components of $V$
where a simple subalgebra of type $A$ acts nontrivially (see Proposition \ref{possibledecomp}) 
and where we are going to define a non-zero multiplication such that Case 2 from the above classification will hold, 
i.~e. where it will only be possible to multiply the structure constant tensor by finitely many complex numbers.
Denote these components by $V_1,\ldots,V_k$, and
let $n_1,\ldots,n_k$ be their dimensions. Choose $k$ commutative associative algebras with 
nilpotent multiplication operators of dimensions $n_1,\ldots,n_k$ so that Case 2 holds for each of them.
Let the central torus of $L$ act on $V_i$ via a character $\chi_i$. Then it acts on $\Lambda^{n_i}V_i^*$
via $-n_i\chi_i$. We have to choose $k$ nonzero highest degree skew-symmetric forms up to the action of the torus,
i.~e. we have to choose an orbit of the torus in a $k$-dimensional space
$$
W=\oplus_{i=1}^k\Lambda^{n_i}(V_i^*),
$$
so that each coordinate of (every) point of the orbit is nonzero.
Such orbits are parametrized by values of a tuple of algebraically independent Laurent monomials that 
generate the lattice of all Laurent monomials in the coordinates on $W$ invariant under the action of the torus.

For example, if $L=GL_n$ and $V$ is a tautological representation of $V$ (we will also see this case later),
then the central torus is one-dimensional, and it acts transitively on the set of nonzero vectors of $\Lambda^nV^*$.
Hence, there are no nontrivial invariant Laurent monomials, and the $\mathfrak{gl}_n$-compatible 
multiplications up to the action of $GL_n$ are in one-to-one correspondence 
with isomorphism classes of $n$-dimensional associative commutative algebras with nilpotent multiplication operators.

\begin{example}
An example of $\mathfrak{sl}_{18}$-compatible multiplication such that it is only possible to multiply 
the structure constant tensor by finitely many scalars.
\end{example}

Consider the subalgebra (without unity) in $\mathbb C[x,y]/(x^5+y^5-x^3y^3,x^4y,xy^4)$ generated by $x$ and $y$. Denote it by $A$.
This is an algebra of dimension 18, it has the following basis: $x, y, x^2, xy, y^2, x^3, x^2y, xy^2, y^3, x^4, x^3y, x^2y^2,
xy^3, y^4, x^2y^3, x^3y^2, x^5, y^5$. In what follows, by the degree of a monomial we understand 
its total degree in $x$ and $y$. Consider an automorphism from the identity component 
of the group of automorphisms of $A$.

Every automorphism of $A$ is determined by the images of $x$ and $y$.
Suppose that $x$ is mapped to $ax+by+(\text{terms of higher degree})$ and $y$ is mapped to $cx+dy+
(\text{terms of higher degree})$. The matrix
$$
\left(\begin{array}{cc}
a & c \\ 
b & d
\end{array}\right)
$$
cannot be degenerate, otherwise the intersection of the image of the automorphism and 
the subspace spanned by $x$ and $y$ is at most one-dimensional. This automorphism 
maps $x^5$ to $a^5x^5+10a^3b^2x^3y^2+10a^2b^3x^2y^3+b^5y^5+\alpha x^3y^3=
(a^5+\alpha)x^5+10a^3b^2x^3y^2+10a^2b^3x^2y^3+(b^5+\alpha)y^5$, 
where $\alpha$ is a complex number, 
$y^5$ is mapped to 
$c^5x^5+10c^3d^2x^3y^2+10c^2d^3x^2y^3+d^5y^5+\beta x^3y^3=
(c^5+\beta)x^5+10c^3d^2x^3y^2+10c^2d^3x^2y^3+(c^5+\beta)y^5$, 
where $\beta\in\mathbb C$, and 
$x^3y^3$ is mapped to $\gamma x^3y^3=\gamma x^5+\gamma y^5$,
where $\gamma\in\mathbb C$. The sum of these three monomials must equal zero in $A$.
In particular, $a^3b^2=-c^3d^2$ and $a^2b^3=-c^2d^3$. Assume first that there are no zeros among 
$a, b, c, d$. Then $(a^3b^2)/(a^2b^3)=(-c^3b^2)/(-c^2d^3)$, and $a/b=c/d$. Hence, $ac=bd$, and 
the matrix above is degenerate. Therefore, at least one of the numbers $a, b, c, d$ equals zero. But if
$c=0$ or $d=0$, then $a^3b^2=0$, so $a=0$ or $b=0$. So we conclude that at least one of the numbers $a, b$
equals zero. Then $-c^2d^3=0$, and at least one of the numbers $c,d$ equals zero. If $a=0$ and $c=0$, then 
the matrix is degenerate, and if $b=0$ and $d=0$, the matrix is also degenerate. The only two 
remaining possibilities are $a=d=0$ or $b=c=0$. These two sets of automorphisms of $A$ 
are disjoint, so the identity component of the group of automorphisms is a subset of 
one of them. For the identity automorphism we have $b=c=0$, so these equalities also hold 
for any automorphism from the identity component. Note that $a\ne 0$ and $d\ne 0$, otherwise 
the matrix is degenerate.

Now we can write that $x$ is mapped to $ax+a_1x^2+a_2xy+a_3y^2+(\text{terms of higher degree})$, and $y$ is mapped to 
$dy+d_1y^2+d_2xy+d_3x^2+(\text{terms of higher degree})$. Observe that each monomial of degree at least 7 equals zero in $A$. Indeed, such a monomial 
is divisible by one of the following monomials: $x^4y$, $xy^4$, $x^7$, or $y^7$. The first two equal 
zero in $A$ by the definition of $A$, and for $x^7$ we have: $x^7=-x^2y^5+x^5y^3=0$ since $xy^4=x^4y=0$.
The calculation for $y^7$ is similar. Hence, the image of $x^5$ (resp. $y^5$) does not depend on the terms of degree at least 3 in the image of $x$
(resp. $y$). So, $x^5$ is mapped to $a^5x^5+5a^4x^4(a_1x^2+a_2xy+a_3y^2)=a^5x^5+5a^4a_1x^6=a^5x^5+5a^4a_1(-xy^5+x^4y^3)=a^5x^5$.
Similarly, $y^5$ is mapped to $d^5y^5$. $x^3y^3$ is mapped to $a^3d^3x^3y^3=a^3d^3x^5+a^3d^3y^5$ (the other terms are of degree at least 7).
Hence, $a^5x^5+d^5y^5-a^3d^3x^5-a^3d^3y^5=0$.
$x^5$ and $y^5$ are elements of the basis of $A$ we chose, so $a^5=a^3d^3=d^5$. Hence, $a^2=d^3$, $a^3=d^2$, and $a^2/a^3=d^3/d^2$. In other words, 
$a^{-1}=d$. Now we can write $a^5=1$, and, since the automorphism under consideration belongs to the identity component of the 
automorphism group, $a=1$. So, $d=1$, and the matrix of the automorphism with respect to the basis we chose is 
lower unitriangular. In other words, the identity component of the automorphism group of $A$ is inside $SL_{18}$.

Now suppose that $h\in SL_{18}$ multiplies the structure constant tensor by $t\in\mathbb C$ ($t\ne 0$).
There exists a scalar matrix $g\in GL_{18}$ that also multiplies the structure constant tensor by $t$, namely, $g=t^{-1}\id_A$.
Then $gh^{-1}$ stabilizes the structure constant tensor, i.~e. $gh^{-1}$ is an automorphism of $A$.
Denote the number of connected components of the automorphism group by $k$. Then $(gh^{-1})^k$ is an element of the identity component of 
the automorphism group. So, since $g$ is a scalar matrix, $g^kh^{-k}\in SL_{18}$, $g^k\in SL_{18}$, 
$g^{18k}=\id_A$, and $t$ is a root of unity of degree $18k$. There are only finitely many possibilities for $t$.

\begin{example}
An example of $\mathfrak{sl}_2$-compatible multiplication such that it is possible to multiply 
the structure constant tensor by any complex number.
\end{example}

Consider an algebra with a basis $\{x,y\}$ and the multiplication defined by 
$x^2=y$, $xy=yx=x^2=0$. Clearly, this multiplication is $\mathfrak{sl}_2$-compatible.
The linear operator defined by $x\mapsto tx$, $y\mapsto t^{-1}y$ has determinant 1 and
multiplies the structure constant tensor by $t^{-3}$.

\subsubsection{$\lambda=\varpi_{l-2}$, $l>2$}

We prove that nontrivial $\mathfrak l$-compatible multiplications do not exist using Lemma \ref{tripletensor}.
Set $\mathfrak l=\mathfrak{sl}_{l+1}$, $V^*=\Lambda^2W$, where $W$ is a tautological $\mathfrak{sl}_{l+1}$-module.
Let $e_1,\ldots, e_{l+1}$ be a basis of $W$. 
Unless this leads to ambiguity, we use the same notation for linear operators on $W$ and for elements of $\mathfrak l$.
We have the following Chevalley generators of $\mathfrak l=\mathfrak{sl}(W)$:
$x_i=e_i\otimes e_{i+1}^*$, $y_i=e_{i+1}\otimes e_i^*$, and $h_i=e_i\otimes e_i^*-e_{i+1}\otimes e_{i+1}^*$.
We also have the following basis of $V^*$: $\{e_i\wedge e_j\mid 1\le i<j\le l+1\}$.
The embedding $R(\mathfrak l)\hookrightarrow V^*\otimes V$ maps $e_i\otimes e_j\in\mathfrak l$ to 
$\sum_{k\ne i,j} e_i\wedge e_k\otimes (e_j\wedge e_k)^*$.

From \cite[Table 5]{vinonisch}, we see that $V^*\otimes R(\mathfrak l)\cong
V(\varpi_1+\varpi_2+\varpi_l)\oplus V(2\varpi_1)\oplus V(\varpi_3+\varpi_l)\oplus V(\varpi_2)$. Let us 
find highest weight vectors of the irreducible subrepresentations.

1. Clearly, $e_1\wedge e_2\otimes e_1\otimes e_{l+1}^*\in V^*\otimes R(\mathfrak l)$
is a vector of weight $2\veps_1+\veps_2-\veps_{l+1}=\varpi_1+\varpi_2+\varpi_l$, so it is a highest weight vector in 
$V(\varpi_1+\varpi_2+\varpi_l)\subset V^*\otimes R(\mathfrak l)$.
The embedding $R(\mathfrak l)\hookrightarrow V^*\otimes V$ maps this vector to
$$
\sum_{k=2}^l (e_1\wedge e_2)\otimes (e_1\wedge e_k)\otimes (e_{l+1}\wedge e_k)^*,
$$
and this is not an element of $S^2(V^*)\otimes V$ since $l>2$.

2. $v=\sum_{i=2}^{l+1}e_1\wedge e_i\otimes e_1\otimes e_i^*$ is annihilated by $\mathfrak u_0$.
Indeed, $x_j$ annihilates all summands except for the ones with $i=j$ and $i=j+1$, and 
$e_1\wedge e_j\otimes e_1\otimes e_j^*+e_1\wedge e_{j+1}\otimes e_1\otimes e_{j+1}^*$ is moved 
to $-e_1\wedge e_j\otimes e_1\otimes e_{j+1}^*+e_1\wedge e_j\otimes e_1\otimes e_{j+1}^*=0$.
$v$ is a vector of weight $2\veps_1+\veps_i-\veps_i=2\varpi_1$, so it is a highest weight vector
of the subrepresentation $V(2\varpi_1)\subset V^*\otimes R(\mathfrak l)$. The embedding 
$R(\mathfrak l)\hookrightarrow V^*\otimes V$ maps this vector to
\begin{multline*}
\sum_{i=2}^{l+1} (e_1\wedge e_i)\otimes \sum_{j\ne i, 1<j\le l+1}(e_1\wedge e_j)\otimes (e_i\wedge e_j)^*\\
=\sum_{1<i<j\le l+1}(e_1\wedge e_i\otimes e_1\wedge e_j-e_1\wedge e_j\otimes e_1\wedge e_i)\otimes
(e_i\wedge e_j)^*\in\Lambda^2(V^*)\otimes V,
\end{multline*}
and $\Lambda^2(V^*)\otimes V\cap S^2(V^*)\otimes V=0$.

3. Let us check that $v=e_1\wedge e_3\otimes e_2\otimes e_{l+1}^*-e_2\wedge e_3\otimes e_1\otimes e_{l+1}^*
-e_1\wedge e_2\otimes e_3\otimes e_{l+1}^*$ is annihilated by $\mathfrak u_0$. Clearly, $v$ is annihilated by 
all $x_i$ with $i>2$. $x_1$ moves $v$ to $e_1\wedge e_3\otimes e_1\otimes e_{l+1}^*-
e_1\wedge e_3\otimes e_1\otimes e_{l+1}^*-e_1\wedge e_1\otimes e_3\otimes e_{l+1}^*=0$, and 
$x_2$ moves $v$ to $e_1\wedge e_2\otimes e_2\otimes e_{l+1}^*-
e_2\wedge e_2\otimes e_1\otimes e_{l+1}^*-e_1\wedge e_2\otimes e_2\otimes e_{l+1}^*=0$.

$v$ is a vector of weight $\veps_1+\veps_2+\veps_3-\veps_{l+1}=\varpi_3+\varpi_l$, so it is a highest weight vector in 
$V(\varpi_3+\varpi_l)\subset V^*\otimes R(\mathfrak l)$. 
To check that the image of $v$ under the embedding 
$R(\mathfrak l)\hookrightarrow V^*\otimes V$ 
is not an element of $S^2(V^*)\otimes V$, we check that it defines a map $V^*\to V^*\otimes V^*$ whose image is not in 
$S^2(V^*)$. Indeed, by applying this map to $e_l\wedge e_{l+1}\in V^*$, we get 
$(e_1\wedge e_3)\otimes (e_l\wedge e_2)-
(e_2\wedge e_3)\otimes (e_l\wedge e_1)-(e_1\wedge e_2)\otimes (e_l\wedge e_3)\notin S^2(V^*)$. (For $l=3$, we have 
$(e_1\wedge e_3)\otimes (e_l\wedge e_2)-
(e_2\wedge e_3)\otimes (e_l\wedge e_1)-(e_1\wedge e_2)\otimes (e_l\wedge e_3)=(e_1\wedge e_3)\otimes (e_3\wedge e_2)-
(e_2\wedge e_3)\otimes (e_3\wedge e_1)\in\Lambda^2(V^*)$.)

4. We are going to check that the following vector is annihilated by $\mathfrak u_0$:
$$
v=\sum_{i=3}^{l+1}(e_1\wedge e_i\otimes e_2\otimes e_i^*
-e_2\wedge e_i\otimes e_1\otimes e_i^*+\frac{l-3}2e_1\wedge e_2\otimes e_i\otimes e_i^*)
+\frac{l-1}2 e_1\wedge e_2\otimes (e_1\otimes e_1^* + e_2\otimes e_2^*).
$$
Indeed, $x_j$ with $j\ge 3$ annihilates each individual summand except the ones with $i=j$ and with $i=j+1$, 
and it brings 
\begin{multline*}
(e_1\wedge e_j\otimes e_2\otimes e_j^*
-e_2\wedge e_j\otimes e_1\otimes e_j^*+\frac{l-3}2e_1\wedge e_2\otimes e_j\otimes e_j^*)\\
+(e_1\wedge e_{j+1}\otimes e_2\otimes e_{j+1}^*
-e_2\wedge e_{j+1}\otimes e_1\otimes e_{j+1}^*+\frac{l-3}2e_1\wedge e_2\otimes e_{j+1}\otimes e_{j+1}^*)
\end{multline*}
to
\begin{multline*}
-e_1\wedge e_j\otimes e_2\otimes e_{j+1}^*+e_2\wedge e_j\otimes e_1\otimes e_{j+1}^*
-\frac{l-3}2e_1\wedge e_2\otimes e_j\otimes e_{j+1}^*\\
+e_1\wedge e_j\otimes e_2\otimes e_{j+1}^*-e_2\wedge e_j\otimes e_1\otimes e_{j+1}^*
+\frac{l-3}2e_1\wedge e_2\otimes e_j\otimes e_{j+1}^*=0.
\end{multline*}
The only two summands that are not annihilated by $x_2$ are the summand with $i=3$ and the last summand (outside the 
summation sign). $x_2$ brings them to
$$
e_1\wedge e_2\otimes e_2\otimes e_3^*+\frac{l-3}2e_1\wedge e_2\otimes e_2\wedge e_3^*
$$
and
$$
-\frac {l-1}2 e_1\wedge e_2\otimes e_2\otimes e_3^*,
$$
respectively. The sum of these two expressions is zero. Finally, 
if we apply
$x_1$ to $v$, we get
$$
\sum_{i=3}^{l+1}(e_1\wedge e_i\otimes e_1\otimes e_i^*
-e_1\wedge e_i\otimes e_1\otimes e_i^*)
+\frac{l-1}2 e_1\wedge e_2\otimes (e_1\otimes e_2^* - e_1\otimes e_2^*)=0.
$$

$v$ is a vector of weight $\veps_1+\veps_2=\varpi_2$, so it is a highest weight vector 
in $V(\varpi_2)\subset V^*\otimes R(\mathfrak l)$. Denote its image under the embedding 
$R(\mathfrak l)\hookrightarrow V^*\otimes V$ by $w$. Then $w$ induces a linear map from $V^*$ to $V^*\otimes V^*$, and
$w\in S^2(V^*)\otimes V$ if and only if the image of this map is a subspace of $S^2(V^*)\subset V^*\otimes V^*$.
But the map induced by $w$ maps $e_l\wedge e_{l+1}$ to
$
e_1\wedge e_l\otimes e_2\wedge e_{l+1}+e_1\wedge e_{l+1}\otimes e_2\wedge e_l
-e_2\wedge e_l\otimes e_1\wedge e_{l+1}-e_2\wedge e_{l+1}\otimes e_1\wedge e_l
+(l-3)e_1\wedge e_2\otimes e_l\wedge e_{l+1}\notin S^2(V^*).
$
(Again, this is an element of $\Lambda^2(V^*)$ if $l=3$.)

\subsubsection{$\lambda=\varpi_p$, where $2<p\le \lceil l/2\rceil$}

In this case there exist no nontrivial multiplications, and we are going to prove this by a contradiction with Lemma \ref{weightlatticefailure}.
We have $\mathfrak X(V)=\{\veps_{k_1}+\ldots+\veps_{k_p}\mid 1\le k_1 < k_2 < \ldots < k_p\le l+1\}$ and 
$\lambda^*=\varpi_{l+1-p}=-\veps_{l+2-p}-\ldots-\veps_{l+1}$. From \cite[Table 5]{vinonisch}, we see that 
$V^*\otimes R(\mathfrak l)\cong V(\varpi_1+\varpi_{l+1-p}+\varpi_l)\oplus V(\varpi_1+\varpi_{l-p})
\oplus V(\varpi_{l+2-p}+\varpi_l)\oplus V(\varpi_{l+1-p})$. So, in Lemma \ref{weightlatticefailure}, there are four possibilities 
for $\lambda^*+\gamma$:

1. $\lambda^*+\gamma=\varpi_1+\varpi_{l+1-p}+\varpi_l$, then 
$\gamma=\varpi_1+\varpi_l=\veps_1+\veps_{l+1}$, and for $\nu=\veps_2+\ldots+\veps_p+\veps_{l+1}$
we have $\nu+\gamma=\veps_1+\ldots+\veps_p\in\mathfrak X(V)$
and $\nu+\gamma+\lambda^*=\veps_1+\ldots+\veps_p-\veps_{l+2-p}-\ldots-\veps_{l+1}\notin\Phi^+$.

2. $\lambda^*+\gamma=\varpi_{l+2-p}+\varpi_l$, then 
$\gamma=\varpi_{l+2-p}-\varpi_{l+1-p}+\varpi_l=\veps_{l+2-p}-\veps_{l+1}$. 
Set $\nu=\veps_1+\ldots+\veps_{p-1}+\veps_{l+1}$. 
We have $\nu+\gamma=\veps_1+\ldots+\veps_{p-1}+\veps_{l+2-p}$. 
We chose $p$ so that $2p\le l+1<l+3$, so $p-1<l+2-p$, and $\nu+\gamma\in\mathfrak X(V)$.
On the other hand, $\nu+\gamma+\lambda^*=
\veps_1+\ldots+\veps_{p-1}+\veps_{l+2-p}-\veps_{l+2-p}-\ldots-\veps_{l+1}=\veps_1+\ldots+\veps_{p-1}
-\veps_{l+3-p}-\ldots-\veps_{l+1}\notin \Phi$ since $p>2$.

3. $\lambda^*+\gamma=\varpi_1+\varpi_{l-p}$, $\gamma=\varpi_1+\varpi_{l-p}-\varpi_{l+1-p}=\veps_1-\veps_{l+1-p}$.
Take $\nu=\veps_2+\ldots+\veps_{p-1}+\veps_{l+1-p}+\veps_{l+1}\in\mathfrak X(V)$ since 
$2p-1<l+1$. Then $\nu+\gamma=\veps_1+\ldots+\veps_{p-1}+\veps_{l+1}\in\mathfrak X(V)$ and
$\nu+\gamma+\lambda^*=\varpi_1+\ldots+\veps_{p-1}
-\veps_{l+2-p}-\ldots-\veps_l\notin\Phi$ since $p>2$.

4. $\lambda^*+\gamma=\varpi_{l+1-p}$ is the highest weight of an irreducible subrepresentation 
of $V^*\otimes R(\mathfrak l)$, but in this case $\gamma=0\notin\Phi^+$, so this $\gamma$ cannot be 
the a weight $\gamma$ whose existence is guaranteed by Lemma \ref{weightlatticefailure}.

So, in each case we have a contradiction with Lemma \ref{weightlatticefailure}, therefore in this case 
nontrivial $\mathfrak l$-compatible multiplications do not exist.

\subsection{Algebras $\mathfrak l$ of type $B_l$ ($l\ge 2$) and $D_l$ ($l\ge 4$)}

The dual root system to a root system of type $B_l$ is a root system of type $C_l$, and the highest 
short root of $C_l$ is $\alpha_1+2(\alpha_2+\ldots+\alpha_{l-1})+\alpha_l$, see \cite[Section 12.2, Table 2]{humps}.
Proposition \ref{shortroot} implies that it is sufficient to consider the highest weight representations 
with highest weights $\varpi_1$ and $\varpi_l$. Lie algebras of types $B_l$ and $C_l$ are isomorphic, 
and this isomorphism identifies the highest weight representations of a Lie algebra of type $B_l$ 
with highest weights $\varpi_1$ and $\varpi_2$ with 
the highest weight representations of a Lie algebra of type $C_l$ 
with highest weights $\varpi_2$ and $\varpi_1$, respectively. We are going to consider the representation 
of a type $B_l$ Lie algebra with highest weight $\varpi_2$ later as the 
representation 
of a type $C_l$ Lie algebra with highest weight $\varpi_1$.

$D_l$ is a self-dual root system. 
All its roots have the same length, and the highest root is 
$\alpha_1+2(\alpha_2+\ldots+\alpha_{l-2})+
\alpha_{l-1}+\alpha_l$, see \cite[Section 12.2, Table 2]{humps}. By Proposition \ref{shortroot}, 
we have to consider weights $\varphi_1$, $\varpi_{l-1}$, and $\varpi_l$.
There exists a diagram automorphism that interchanges simple roots $\alpha_{l-1}$ and $\alpha_l$, so 
it interchanges representations $V(\varpi_{l-1})$ and $V(\varpi_l)$. Hence, it suffices to consider 
only one of the representations $V(\varpi_{l-1})$ and $V(\varpi_l)$. We will consider $V(\varpi_l)$.

To deal with the irreducible representations with highest weight $\varpi_l$ for both algebra types $B_l$ and $D_l$, 
we need an exact construction for these root systems. Let $\veps_1,\ldots,\veps_l$ be the orthonormal basis 
of an $l$-dimensional Euclidean space. By coordinates of vectors from this space we understand their coordinates 
with respect to this basis, unless stated otherwise. 
All vectors 
of the form $\veps_i+\veps_j, \veps_i-\veps_j, -\veps_i-\veps_j$ ($i\ne j$)
form a root system of type $D_l$. The vectors 
$\veps_i+\veps_j, \veps_i-\veps_j$ ($i<j$) form a positive root subsystem. To construct a root system 
of type $B_l$, take all vectors we took for $D_l$ and all vectors $\pm\veps_i$. The vectors $\veps_i$ 
together with the positive root subsystem of $D_l$ we chose form a positive root subsystem of $B_l$.
For both root systems, we have $\varpi_1=\veps_1$, $\varpi_2=\veps_1+\veps_2$, and $\varpi_l=(\veps_1+\ldots+\veps_l)/2$.
For both algebra types, $\varpi_2$ is the highest weight of the adjoint representation.

\subsubsection{$\lambda=\varpi_1$, $l\ge 2$ if $\mathfrak l$ is of type $B_l$}

Let $V$ be a vector space of dimension $2l$ (if $\mathfrak l$ is of type $D_l$) or $2l+1$
(if $\mathfrak l$ is of type $B_l$), and let $\omega$ be a nonsingular bilinear form on $V$. 
Then $\mathfrak l$ acts on $V$ by skew-symmetric operators, and $\rho(\mathfrak l)$ consists
of all operators skew-symmetric with respect to $\omega$.

We 
prove that nontrivial $\mathfrak l$-compatible multiplications do not exist.
Assume that we have an $\mathfrak l$-compatible multiplication on $V$.
Define a trilinear form $c$ by $c(u,v,w)=\omega (uv,w)$ ($u,v,w\in V$).
For each $u\in V$, $\mu_u$ is a skew-symmetric operator. We have
$c(u,v,w)=\omega(uv, w)=\omega (\mu_u v, w)=-\omega(v,\mu_u w)=-\omega(v,uw)=-\omega (uw,v)=-c(u,w,v)$.
Since the multiplication is commutative, we have $c(u,v,w)=\omega(uv,w)=\omega(vu,w)=c(v,u,w)$. Therefore,
$c(u,v,w)=-c(u,w,v)=-c(w,u,v)=c(w,v,u)=c(v,w,u)=-c(v,u,w)=-c(u,v,w)$, so $c(u,v,w)=0$, and $c=0$.

\subsubsection{Algebra $\mathfrak l$ of type $B_l$, $\lambda=\varpi_l$, $l\ge 3$}

There are no nontrivial $\mathfrak l$-compatible multiplications. We use Lemma \ref{weightlatticefailure}.
The Weyl group is generated by all permutations of the basis vectors and by all reflections that map $\veps_i$ to 
$-\veps_i$ and keep all other basis vectors unchanged. The orbit of $\varpi_l$ under the action of these 
group consists of all vectors such that all their coordinates equal $\pm1/2$. Since $\dim V(\varpi_l)=2^l$ (see \cite[Table 5]{vinonisch}),
these weights are all weights of $V(\varpi_l)$. The Dynkin diagram of type $B_l$ has no nontrivial 
automorphisms, so $\lambda^*=\lambda$, and we see from \cite[Table 5]{vinonisch} that 
$V^*\otimes R(\mathfrak l)\cong V(\varpi_2+\varpi_l)\oplus V(\varpi_1+\varpi_l)
\oplus V(\varpi_l)$. We have three possibilities for $\lambda^*+\gamma$.

1. $\lambda^*+\gamma=\varpi_2+\varpi_l$, $\gamma=\varpi_2=\veps_1+\veps_2$.
Set $\nu=(-\veps_1-\veps_2+\veps_3+\ldots+\veps_l)/2$, then 
$\nu+\gamma=(\veps_1+\ldots+\veps_l)/2\in\mathfrak X(V)$ and 
$\nu+\gamma+\lambda^*=
\veps_1+\ldots+\veps_l\notin\Phi^+$.

2. $\lambda^*+\gamma=\varpi_1+\varpi_l$, $\gamma=\varpi_1=\veps_1$.
In this case set $\nu=(-\veps_1+\veps_2+\ldots+\veps_l)/2$,
then $\nu+\gamma=(\veps_1+\ldots+\veps_l)/2\in\mathfrak X(V)$
and $\nu+\gamma+\lambda^*=
\veps_1+\ldots+\veps_l\notin\Phi^+$.

3. If $\lambda^*+\gamma=\varpi_l$, then $\gamma=0\notin\Phi^+$.

\subsubsection{Algebra $\mathfrak l$ of type $D_l$, $\lambda=\varpi_l$, $l\ge 4$}

Again there are no nontrivial $\mathfrak l$-compatible multiplications, and again we use Lemma \ref{weightlatticefailure} to prove this.
If $l=4$, then there exists a diagram automorphism 
of $\mathfrak l$ that 
interchanges
$V(\varpi_l)$ 
and 
$V(\varpi_1)$, 
and the case $\lambda=\varpi_1$ was considered earlier, so we may suppose that $l\ge 5$.
This time the Weyl group is generated by all permutations of the basis vectors and by all reflections that map $\veps_i$ to 
$-\veps_i$, $\veps_j$ to $-\veps_j$ and keep all other basis vectors unchanged. The orbit of $\varpi_l$ 
consists of all vectors such that all their coordinates equal $\pm 1/2$, and the number of coordinates equal to $-1/2$ is even.
This time $\dim V(\varpi_l)=2^{l-1}$, so again these vectors are all weights of $V(\varpi_l)$. In particular, the lowest weight 
is $(-\veps_1-\ldots-\veps_l)/2=-\varpi_l$ if $l$ is even, and is $(-\veps_1-\ldots-\veps_{l-1}+\veps_l)/2=-\varpi_{l-1}$ if $l$ is odd.
Hence, $\lambda^*=\varpi_l$ if $l$ is even, and $\lambda^*=\varpi_{l-1}$ if $l$ is odd. Denote $\zeta=\varpi_l$ if 
$l$ is odd, and $\zeta=\varpi_{l-1}$ is $l$ is even. In other words, $\lambda^*\ne \zeta$ and $\{\lambda^*,\zeta\}=\{\varpi_{l-1},\varpi_l\}$.
Using \cite[Table 5]{vinonisch}, we find that $V^*\otimes R(\mathfrak l)\cong V(\varpi_2+\lambda^*)\oplus V(\varpi_1+\zeta)
\oplus V(\lambda^*)$. We have to consider three cases.

1. $\lambda^*+\gamma=\varpi_2+\lambda^*$, $\gamma=\varpi_2$. Set 
$\nu=(-\veps_1-\veps_2+\veps_3+\ldots+\veps_l)/2$, then 
$\nu+\gamma=(\veps_1+\ldots+\veps_l)/2\in\mathfrak X(V)$. 
If $l$ is even, 
$\nu+\gamma+\lambda^*=
\veps_1+\ldots+\veps_l$, and 
if $l$ is odd, 
$\nu+\gamma+\lambda^*=
\veps_1+\ldots+\veps_{l-1}$. In both cases, this is not a positive root.


2.  $\lambda^*+\gamma=\varpi_1+\zeta$, $\gamma=\varpi_1+\zeta-\lambda^*$. Observe that $\zeta-\lambda^*=-(-1)^l\veps_l$, so $\gamma=\varpi_1-(-1)^l\veps_l$.
Set $\nu=(-\veps_1-(-1)^l\veps_2+\veps_3+\ldots+\veps_{l-1}+(-1)^l\veps_l)/2$, then 
$\nu+\gamma=
(\veps_1-(-1)^l\veps_2+\veps_3+\ldots+\veps_{l-1}-(-1)^l\veps_l)/2\in\mathfrak X(V)$.
If $l$ is even, then $\nu+\gamma+\lambda^*=\veps_1+\veps_3+\veps_4+\ldots+\veps_{l-1}\notin\Phi$ since $l>4$.
If $l$ is odd, then $\nu+\gamma+\lambda^*=\veps_1+\veps_2+\ldots+\veps_{l-1}\notin\Phi$.

3. If $\lambda^*+\gamma=\varpi_l$, then $\gamma=0\notin\Phi^+$.

\subsection{Algebra $\mathfrak l$ of type $C_l$ ($l\ge 2$)}

Let $E$ be a Euclidean space with an orthogonal basis $\veps_1,\ldots,\veps_l$.
Vectors of the form $\veps_i+\veps_j$, $-\veps_i-\veps_j$, $\veps_i-\veps_j$ ($i\ne j$), and
$\pm 2\veps_i$ form a root system of type $C_l$. Vectors
$\veps_i-\veps_j$ ($1\le i < j\le l$) and $\veps_i+\veps_j$ ($1\le i,j\le l$) form a system
of positive roots $\Phi^+$. 
The corresponding fundamental weights can be written as $\varpi_i=\veps_1+\ldots+\veps_i$.
The dual root system is $B_l$, and its highest short root equals
$\alpha_1^{B_l}+\alpha_2^{B_l}+\ldots+\alpha_l^{B_l}$, where $\alpha_i^{B_l}$ are simple roots of $B_l$.
So, we have to consider all fundamental representations of $\mathfrak l$. If $l=2$, then we have 
already considered the case of the first fundamental representation of an algebra of type $B_2$, 
which is isomorphic to the case second fundamental representation of an algebra of type $C_2$, so 
we do not have to consider this case again.

\subsubsection{$\lambda=\varpi_1$, $l\ge 2$}\label{spnontrivmult}
In this case, nontrivial $\mathfrak l$-compatible multiplications exist, and we will construct them.

Let $V$ be a vector space of dimension $2l$. Choose a basis $e_1,\ldots,e_l,e_{-1},\ldots,e_{-l}$ of $V$, and
let $\omega$ be the skew-symmetric bilinear form defined by $\omega=\sum(e_i^*\otimes e_{-i}^*-e_{-i}^*\otimes e_i^*)$.
We can identify $\mathfrak l$ with the Lie algebra $\mathfrak{sp}(V)$ of all operators $V\to V$ that preserve $\omega$.
Then $V$ becomes the first fundamental representation of $\mathfrak l$. All upper-triangular matrices in 
$\mathfrak{sp}(V)$ form a maximal solvable subalgebra with 
a Cartan subalgebra formed by 
all diagonal matrices in $\mathfrak{sp}(V)$ and the maximal unipotent subalgebra formed by all upper-unitriangular 
matrices in $\mathfrak{sp}(V)$. After these identifications, Chevalley generators can be written as 
$x_i=e_i\otimes e_{i+1}^*-e_{-(i+1)}\otimes e_{-i}^*$, $y_i=e_{i+1}\otimes e_i^*-e_{-i}\otimes e_{-(i+1)}^*$,
$h_i=e_i\otimes e_i^*-e_{i+1}\otimes e_{i+1}^*-e_{-i}\otimes e_{-i}^*+e_{-(i+1)}\otimes e_{-(i+1)}^*$. In particular, all $x_i$ generate 
the unipotent subalgebra of all upper-unitriangular matrices in $\mathfrak{sp}(V)$.

The form $\omega$ identifies $V$ and $V^*$ ($v\in V$ is identified with $\omega(v,\cdot)\in V^*$). Hence, we can identify 
$V^*\otimes V^*\otimes V$ with $V^*\otimes V^*\otimes V^*$, and structure tensor of (non necessarily $\mathfrak l$-compatible)
multiplications on $V$ are in one-to-one correspondence with trilinear forms on $V$. Given a multiplication, the corresponding 
form $c$ is defined as follows: $c(u,v,w)=\omega(uv,w)$. Let us reformulate the definition of $\mathfrak l$-compatibility in terms of 
the corresponding trilinear form.

Commutativity is equivalent to the equality $\omega(uv,w)=\omega (vu,w)$ for all $u,v,w\in V$ since $\omega$ is nondegenerate.
In terms of $c$ this means that $c(u,v,w)=c(v,u,w)$. An operator $\mu_u$ acts as an element of $\mathfrak l$ if and only if 
it is skew-symmetric with respect to $\omega$, i.~e. $\omega(\mu_u v,w)=-\omega(v,\mu_u w)$. In other words (using the skew symmetry of $\omega$) 
we can write $\omega (uv,w)=\omega(uw,v)$. In terms of $c$ this means that $c(u,v,w)=c(u,w,v)$. Hence, a multiplication is commutative 
and all multiplication operators act as elements of $\mathfrak l$ if and only if $c$ is a totally symmetric trilinear form. So, in what 
follows we will consider only totally symmetric trilinear forms $c$.

If a multiplication is commutative and associative 
and all multiplication operators
are skew-symmetric, then for all $u,v,w,z\in V$ we have $\omega (uvw,z)=-\omega (vw, uz)=-\omega (wv, uz)=\omega (v,wuz)=\omega (v,(wu)z)=-\omega ((wu)v,z)-\omega (uvw,z)$, 
therefore $uvw=0$ for every triple $u,v,w\in V$. And vice versa, if a multiplication is commutative and every product of the form $u(vw)$ equals zero, 
then every product of the form $(uv)w$ equals $w(uv)=0$, so the multiplication is also associative.
We also see that the nilpotency of multiplication operators in case of this representation follows
from the other three conditions in the definition of $\mathfrak l$-compatibility.

Now suppose that we deal with an $\mathfrak l$-compatible multiplication. 
Let $X$ be the linear span of all products of the form $uv$ ($u,v\in V$).
As we already know, $uvw=0$ for all $u,v,w\in V$, so for all $u,v,w,z\in V$ we have $\omega (uv,wz)=-\omega (uvw,z)=0$, 
hence $X$ is an isotropic subspace. Denote the $\omega$-orthogonal complement of $X$ by $Y$.
If $u\in Y$ and $v,w\in V$, then $vw\in X$, and $\omega (uv,w)=\omega(vu,w)=-\omega (u,vw)=0$, 
hence $uv=0$ for all $u\in Y$, $v\in V$. In other words, $\mu_u=0$ if $u\in Y$.
We have the following condition for the form $c$: if $u\in Y$, $v,w\in V$, then 
$c(u,v,w)=\omega(uv,w)=0$, so $c(Y,V,V)=0$.

Now suppose that $c$ is a totally symmetric trilinear form on $V$, and $Y'$ is a coisotropic subspace of $V$ 
such that $c(Y',V,V)=0$. (We do not assume now a priori that the corresponding multiplication is 
associative, but we already know that since $c$ is symmetric, the corresponding multiplication is commutative and 
the multiplication operators are skew-symmetric.) Then for all $u\in Y'$, $v,w\in V$ we can write $\omega (uv,w)=c(u,v,w)=0$, so $\mu_u=0$.
Denote by $X'$ the $\omega$-orthogonal complement of $Y'$.
Since $c$ is symmetric, we can also write $c(V,V,Y')=0$, so if $u,v\in V$ and $w\in Y'$, then 
$\omega (uv,w)=c(u,v,w)=0$, so $\omega (uv,Y')=0$, and $uv\in X'$. $Y'$ is coisotropic, so $\omega (X',X')=0$.
Now, using the skew symmetry of all multiplication operators, we see that for all $u,v,w,z\in V$, one has
$\omega(u(vw),z)=-\omega(vw,uz)=0$, so $u(vw)=0$, and the multiplication is associative.

Therefore, $\mathfrak l$-compatible multiplications on $V$ are in bijection with trilinear symmetric forms 
$c$ on $V$ such that there exists a coisotropic subspace $Y\subseteq V$ such that $c(Y,V,V)=0$.

The action of $Sp_{2l}$ on $V$ can move any coisotropic subspace $Y$ to an isotropic subspace containing $Z=\langle e_1,\ldots, e_l\rangle$, 
and if a subspace of $V$ contains $Z$, it is always coisotropic.
Therefore, we have a bijection between the $\mathfrak l$-compatible multiplications 
up to the action of $Sp_{2l}$ and the trilinear forms $c$ on $V$ 
such that $c(Z,V,V)=0$ up to the action of the (maximal) subgroup of $Sp_{2l}$ that preserves $Z$. This 
subgroup is exactly the parabolic subgroup of $Sp_{2l}$ that we have previously denoted by $P_l$.
The symmetric trilinear forms on $V$ such that $c(Z,V,V)=0$ are canonically identified with 
symmetric trilinear forms on $V/Z$. If $g$ is an element of the unipotent radical of 
$P_l$,
then 
the action of $g$ on $V/Z$ is 
trivial, so for every $u,v,w\in V$ 
we have $(gc)(u,v,w)=c(g^{-1}u, g^{-1}v, g^{-1}w)=c(u+u',v+v',w+w')$, where $u',v',w'\in Z$, and 
$(gc)(u,v,w)=c(u,v,w)$. The quotient of $P_l$ modulo its unipotent radical equals $GL(V/Z)$, and finally 
we get the following parametrization: the $\mathfrak l$-compatible multiplications on $V$ up to the action of 
$Sp_{2l}$ are parametrized by the trilinear symmetric forms on $V/Z$ up to the action of $GL(V/Z)$.
This action enables one to multiply any trilinear form, and therefore the structure constant tensor of the 
corresponding multiplication, by any scalar, so the action of a central torus of a reductive group with one of its simple components 
of type $C_l$ does not change the answer.

\subsubsection{$\lambda=\varpi_p$, $l\ge 3$, $p\ge 2$}

In this case there are no nontrivial $\mathfrak l$-compatible multiplications, and we use Lemma \ref{weightlatticefailure}
to prove this.
The Dynkin diagram of type $C_l$ has no nontrivial 
automorphisms, hence $\lambda^*=\lambda$. The Weyl group 
is generated by permutations of vectors $\veps_i$ and by reflections that map $\veps_i$ to $-\veps_i$ and 
keep all other basis vectors unchanged. Hence, $\mathfrak X(V)$ at least contains 
all possible linear combinations of $p$ of the basis vectors $\veps_i$ with coefficients $\pm1$. 
Using \cite[Table 5]{vinonisch}, we find that 
$V^*\otimes R(\mathfrak l)\cong V(2\varpi_1+\varpi_p)\oplus V(\varpi_1+\varpi_{p-1})
\oplus V(\varpi_p)\oplus V(\varpi_1+\varpi_{p+1})$, and the last summand is present only if $p<l$.
All irreducible components are different, and we have to consider four cases.

1. $\lambda^*+\gamma=2\varpi_1+\varpi_p$, $\gamma=2\varpi_1=2\veps_1$.
Set $\nu=-\veps_1+\veps_2+\ldots+\veps_p$, then $\nu+\gamma=
\veps_1+\ldots+\veps_p\in\mathfrak X(V)$ and $\nu+\gamma+\lambda^*=
2(\veps_1+\ldots+\veps_p)\notin\Phi^+$.

2. $\lambda^*+\gamma=\varpi_1+\varpi_{p-1}$, $\gamma=\varpi_1+\varpi_{p-1}-\varpi_p
=\veps_1-\veps_p$. If $p>2$, we can take $\nu=-\veps_1+\veps_2+\ldots+\veps_p$, then 
$\nu+\gamma=\veps_2+\ldots+\veps_{p-1}\in\mathfrak X(V)$ and 
$\nu+\gamma+\lambda^*=\veps_1+2(\veps_2+\ldots+\veps_{p-1})+\veps_p\notin\Phi^+$.
If $p=2$, we take $\nu=\veps_2+\veps_3$ (recall that $l\ge 3$), then
$\nu+\gamma=\veps_1+\veps_3\in\mathfrak X(V)$ and $\nu+\gamma+\lambda^*=2\veps_1+
\veps_2+\veps_3\notin\Phi^+$.

3. $\lambda^*+\gamma=\varpi_1+\varpi_{p+1}$. This case is only possible if $p<l$. Then 
$\gamma=\varpi_1+\varpi_{p+1}-\varpi_p=\veps_1+\veps_{p+1}$. Set 
$\nu=-\veps_1+\veps_2+\ldots+\veps_p$, then $\nu+\gamma=\veps_2+\ldots+\veps_{p+1}\in\mathfrak X(V)$
and $\nu+\gamma+\lambda^*=\veps_1+2(\veps_2+\ldots+\veps_p)+\veps_{p+1}\notin\Phi^+$.

4. If $\lambda^*+\gamma=\varpi_p$, then $\gamma=0\notin\Phi^+$.

\subsection{Algebra $\mathfrak l$ of type $E_6$}

$E_6$ is a self-dual root system, and the highest root equals $\alpha_1+2\alpha_2+2\alpha_3+3\alpha_4+2\alpha_5+\alpha_6$
(see \cite[Section 12.2, Table 2]{humps}), 
where $\alpha_1,\ldots,\alpha_6$ are simple roots. There exists an outer automorphism of $\mathfrak l$ that 
interchanges $V(\varpi_1)$ and $V(\varpi_6)$, so it is sufficient to consider the case $V=V(\varpi_1)$.

To construct the root system and the weight system for $\mathfrak l$ we use a model associated with a grading as described in \cite[Chapter 5, \S2]{vol41}.
The extended simple root system for $\mathfrak l$
consists of the simple roots $\alpha_1,\ldots,\alpha_6$ and the lowest (negative) root $\alpha'=-\alpha_1-2\alpha_2-2\alpha_3-3\alpha_4-2\alpha_5-\alpha_6$,
which is orthogonal to all simple roots except $\alpha_2$.
Consider the grading on $\mathfrak l$ corresponding to an inner automorphism and defined by label 1 at $\alpha_2$ and by labels 0 at all other 
simple roots and at the lowest root (see \cite[Chapter 3, \S3.7]{vol41}). The zeroth graded component is isomorphic to $\mathfrak{sl}_6\oplus \mathfrak{sl}_2$
as a lie algebra. The construction of a grading also provides a system of simple roots for the zeroth graded component, in this case 
the simple roots of $\mathfrak{sl}_6$ are $\alpha_1,\alpha_3,\alpha_4,\alpha_5,\alpha_6$, and the simple root of $\mathfrak{sl}_2$ is $\alpha'$.
The first graded component is an irreducible representation of the zeroth graded component, and its lowest weight is $\alpha_2$.
Hence, the first graded component is isomorphic to $V_{\mathfrak{sl}_6}(\varpi_3)\otimes V_{\mathfrak{sl}_2}(\varpi_1)=
\Lambda^3(\mathbb C^6)\otimes \mathbb C^2$ as a representation of 
$\mathfrak{sl}_6\oplus \mathfrak{sl}_2$.

These data enable us to construct a root system of type $E_6$. Consider a Euclidean space $E$ with an 
orthogonal basis $\widetilde{\veps_1}, \ldots, \widetilde{\veps_6}$ and its subspace $E'=\langle\widetilde{\veps_1}+\ldots+\widetilde{\veps_6}\rangle^\bot$.
Denote the orthogonal projection $E\to E'$ by $q$. Denote $\veps_i=q(\widetilde{\veps_i}/\sqrt2)$, then $\veps_1+\ldots+\veps_6=0$, $(\veps_i,\veps_i)=5/12$, and 
$(\veps_i,\veps_j)=-1/12$ if $i\ne j$. Consider also a one-dimensional Euclidean space with an orthonormal basis $\zeta_1$.
Then a root system of type $E_6$ consists of the following vectors in $E'\oplus \langle\zeta_1\rangle$: 
$\veps_i-\veps_j$ ($i\ne j$), $\pm \zeta_1$, and 
$\veps_i+\veps_j+\veps_k\pm \zeta_1/2$ 
($i$, $j$, and $k$ are three different indices).
One checks easily that the length of each of these vectors is 1.
Here $\alpha_1=\veps_1-\veps_2$, $\alpha_i=\veps_{i-1}-\veps_i$ for $i=3,4,5,6$, $\alpha'=\zeta_1$, and 
$\alpha_2=\veps_4+\veps_5+\veps_6-\zeta_1/2$
(the lowest weight of $\Lambda^3(\mathbb C^6)\otimes \mathbb C^2$).


To describe $\mathfrak X(V_{\mathfrak l}(\varpi_1))$, consider a Lie algebra $\mathfrak g$ of type $E_7$. 
Fix a Cartan subalgebra and the corresponding root system of $\mathfrak g$. This root system contains a root system 
of type $E_6$, so $\mathfrak l$ can be embedded into $\mathfrak g$ so that the chosen Cartan subalgebras, the chosen Borel subalgebras and 
the corresponding root systems are also 
embedded. Then simple roots are mapped to simple roots. Without loss of generality we may assume that this embedding preserves scalar multiplication.
So we may use the same notation for the simple roots of $E_7$ and for the simple roots of $E_6$, i.~e. 
we may denote the simple roots of $E_7$ by $\alpha_i$ ($1\le i\le 7$), 
then the simple root system of $E_6$ chosen previously is exactly $\{\alpha_1,\ldots,\alpha_6\}$.
Denote the parabolic subalgebra of $\mathfrak g$ corresponding to $\alpha_7$ by $\mathfrak p$.
The semisimple
part of the standard Levi subalgebra of 
$\mathfrak p$ is 
exactly $\mathfrak l$.
Denote the unipotent radical of $\mathfrak p$ by $\mathfrak u$.
The highest root of $E_7$ equals 
$\alpha=2\alpha_1+2\alpha_2+3\alpha_3+4\alpha_4+3\alpha_5+2\alpha_6+\alpha_7$.
The scalar product of $\alpha$ and any simple root of $E_7$ except $\alpha_1$
equals 0, and $(\alpha,\alpha_1)=1/2$ (recall that we initially chose the root system of type $E_6$ so that all roots are of length 1).
There are 36 positive roots in $E_6$ and 63 positive roots in $E_7$, so $\dim\mathfrak u=27$. 
But $\dim V=27$ (see \cite[Table 5]{vinonisch}), so $\mathfrak u$ is
isomorphic to $\dim V$ as a representation of $\mathfrak l$.
Therefore, the numerical label of a weight of $V$
at a root of $\mathfrak l$ can be computed as twice the scalar product of the root of $E_7$ 
corresponding to this weight (its decomposition into a linear combination of simple roots of $E_7$
contains $\alpha_7$ with coefficient 1) and the root of $E_6$ considered as a root of $E_7$.

The embedding $\mathfrak{sl}_6\oplus\mathfrak{sl}_2\subset \mathfrak l$ enables us to consider $\mathfrak u$ as a representation 
of $\mathfrak{sl}_6\oplus\mathfrak{sl}_2$. Let us decompose it into a sum of irreducible 
$\mathfrak{sl}_6\oplus\mathfrak{sl}_2$-representations.
$E_7$ has a root subsystem of type $A_6$ generated  by $\alpha_1, \alpha_3,\alpha_4,\alpha_5,\alpha_6,\alpha_7$. Hence, 
$\beta_1=\alpha_1+\alpha_3+\alpha_4+\alpha_5+\alpha_6+\alpha_7$ is a root of $E_7$. $(\alpha_i,\beta_1)\ge 0$ if $i\ne 2$, so $(\alpha_i+\beta_1,\alpha_i+\beta_1)>1$ if 
$i\ne 2$, and this is not a root of $E_7$. Also, if we add $\alpha'$ to $\beta_1$, we get a linear combination of roots $\alpha_i$ where 
some coefficients are positive and some are negative, so this sum cannot be a root of $E_7$. Therefore, $\mathfrak g_{\beta_1}$ is the highest weight 
subspace of an irreducible $\mathfrak{sl}_6\oplus\mathfrak{sl}_2$-subrepresentation in $\mathfrak u$. The only nonzero numerical label of $\beta_1$ at the 
chosen simple roots for $\mathfrak{sl}_6$ is the one at $\alpha_1$, and this numerical label equals 1. The numerical label of $\beta_1$ at 
$\alpha'$ also equals 1, hence this irreducible representation is isomorphic to $\mathbb C^6\otimes \mathbb C^2$.

$E_7$ also has a root subsystem of type $A_4$ generated by $\alpha_1, \alpha_3, \alpha_4, \alpha_2$, so $\alpha''=\alpha_1+\alpha_2+\alpha_3+\alpha_4$
is a root of $E_7$.
The reflection defined by $\alpha''$ maps $\alpha$ to $\alpha-\alpha''=\alpha_1+\alpha_2+2\alpha_3+3\alpha_4+3\alpha_5+2\alpha_6+\alpha_7$, 
so this is also a root of $E_7$. Denote it by $\beta_2$. Again, if $i\ne 2$, then $(\alpha_i,\beta_2)>0$ and $(\alpha_i+\beta_2,\alpha_i+\beta_2)>1$, so 
$\alpha_i+\beta_2$ cannot be a root of $E_7$. And again $\alpha'+\beta_2$ is a linear combinations of roots $\alpha_i$ where some 
coefficients are positive and some are negative, so $\alpha'+\beta_2$ is not a root of $E_7$. Again we conclude that 
$\mathfrak g_{\beta_2}$ 
is the highest weight 
subspace of an irreducible $\mathfrak{sl}_6\oplus\mathfrak{sl}_2$-subrepresentation in $\mathfrak u$.
The only nonzero numerical label of $\beta_2$ at the 
chosen simple roots for $\mathfrak{sl}_6$ is the one at $\alpha_5$, and this numerical label equals 1.
The numerical label of $\beta_2$ at $\alpha'$ equals zero, so this irreducible subrepresentation is 
isomorphic to $\Lambda^2(\mathbb C^6)^*$, and $\mathfrak{sl}_2$ acts trivially on it. Therefore, 
weights of $V$ are $-\veps_i-\veps_j$ ($1\le i< j\le 6$) and $\veps_i\pm\zeta_1/2$ ($1\le i\le 6$).

The highest weight of $V$ as an $\mathfrak l$-module is $\lambda=\varpi_1=\veps_1-\zeta_1/2$. The lowest weight is 
$\veps_6+\zeta_1/2$ since for none of the simple roots $\alpha_i$, $\veps_6+\zeta_1/2-\alpha_i$ is a weight of $V$.
So, $\lambda^*=-\veps_6-\zeta_1/2$. By computing scalar products, one checks directly that $\lambda^*=\varpi_6$.

Now we are ready to apply Lemma \ref{weightlatticefailure}. From \cite[Table 5]{vinonisch} we see that 
$V^*\otimes R(\mathfrak l)\cong V(\varpi_2+\varpi_6)+V(\varpi_3)+V(\varpi_6)$. We have to consider three cases.

1. $\lambda^*+\gamma=\varpi_2+\varpi_6$, $\gamma=\varpi_2$. One can check directly that $\varpi_2=-\zeta_1$.
Take $\nu=\veps_1+\zeta_1/2$, then $\nu+\gamma=\veps_1-\zeta_1/2\in\mathfrak X(V)$ and 
$\nu+\gamma+\lambda^*=\veps_1-\veps_6-\zeta_1\notin\Phi$.

2. $\lambda^*+\gamma=\varpi_3$, and again one can check by computing scalar products that 
$\varpi_3=\veps_1+\veps_2-\zeta_1$. So, $\gamma=\veps_1+\veps_2+\veps_6-\zeta_1/2$, 
and we can set $\nu=-\veps_2-\veps_6$. Then $\nu+\gamma=\veps_1-\zeta_1/2\in\mathfrak X(V)$
and $\nu+\gamma+\lambda^*=\veps_1-\veps_6-\zeta_1\notin\Phi$.

3. If $\lambda^*+\gamma=\varpi_6$, then $\gamma=0\notin\Phi^+$.

\subsection{Algebra $\mathfrak l$ of type $E_7$}

$E_7$ is also a self-dual root system, and the highest root equals 
$2\alpha_1+2\alpha_2+3\alpha_3+4\alpha_4+3\alpha_5+2\alpha_6+\alpha_7$. We have to consider $V=V(\varpi_7)$.
Without loss of generality, suppose that the length of each root is 1.
Again, we use a model associated with a grading to describe a root system of type $E_7$. Denote the lowest root 
of $E_7$ by $\alpha'$. We have $(\alpha',\alpha_i)=0$ for $i\ne 1$, $(\alpha',\alpha_1)=-1/2$.
Consider the grading on $\mathfrak l$ corresponding to 
an inner automorphism and defined by label 1 at $\alpha_2$ and by labels 0 
at $\alpha'$ and at all $\alpha_i$, where $i\ne 2$. The zeroth graded component 
is isomorphic to $\mathfrak{sl}_8$, and its simple roots defined by the grading construction 
are: $\alpha',\alpha_1,\alpha_3,\alpha_4,\alpha_5,\alpha_6,\alpha_7$. The first grading component 
as a representation of the zeroth graded component is isomorphic to $\Lambda^4(\mathbb C^8)$.

To construct a root system of type $E_7$, consider a Euclidean space $E$ with an orthonormal
basis $\widetilde{\veps_1},\ldots,\widetilde{\veps_8}$ and its subspace $E'=\langle\widetilde{\veps_1}+\ldots+\widetilde{\veps_8}\rangle^\bot$.
Denote the orthogonal projection $E\to E'$ by $q$. Denote $\veps_i=q(\widetilde{\veps_i}/\sqrt2)$.
One check directly that $(\veps_i,\veps_i)=7/16$, $(\veps_i,\veps_j)=-1/16$ if $i\ne j$, and 
$\veps_1+\ldots+\veps_8=0$. A root system of type $E_7$ consists of all vectors of the form 
$\veps_i-\veps_j$ ($i\ne j$) and $\veps_i+\veps_j+\veps_k+\veps_l$ (all four indices are different).
For the simple roots of $E_7$ provided by the grading construction we have
$\alpha_1=\veps_2-\veps_3$, $\alpha_i=\veps_i-\veps_{i+1}$ for $3\le i\le 7$, and 
$\alpha'=-2\alpha_1-2\alpha_2-3\alpha_3-4\alpha_4-3\alpha_5-2\alpha_6-\alpha_7=\veps_1-\veps_2$, so
$\alpha_2=\veps_5+\veps_6+\veps_7+\veps_8$. One checks easily that all these vectors are of length 1.

To find $\mathfrak X(V(\varpi_7))$, consider a Lie algebra $\mathfrak g$ of type $E_8$. 
Its root system contains a subsystem of subsystem of type $E_7$, so we can choose a Borel subalgebra and a Cartan 
subalgebra in $\mathfrak g$ and identify 
$\mathfrak l$ with a subalgebra of $\mathfrak g$ so that the chosen Borel (resp. Cartan) subalgebra of $\mathfrak l$ is 
embedded into the chosen Borel (resp. Cartan) subalgebra
of $\mathfrak g$. With this embedding, the simple roots $\alpha_1,\ldots,\alpha_7$ of $\mathfrak l$ are mapped 
to the simple roots $\alpha_1,\ldots,\alpha_7$ of $\mathfrak g$, so we can use the same notation for them 
(and denote the remaining simple root of $\mathfrak g$ by $\alpha_8$).
The highest root of $E_8$ is $2\alpha_1+3\alpha_2+4\alpha_3+6\alpha_4+5\alpha_5
+4\alpha_6+3\alpha_7+2\alpha_8$, and the reflection defined by $\alpha_8$ maps it to 
$\beta=2\alpha_1+3\alpha_2+4\alpha_3+6\alpha_4+5\alpha_5
+4\alpha_6+3\alpha_7+\alpha_8$, so $\beta$ is also a root of $E_8$. There are 
120 positive roots in $E_8$, and 63 of them are positive roots of $E_7$, so 
their decomposition into a linear combination of $\alpha_1,\ldots,\alpha_8$ does not actually contain $\alpha_8$.
We know one root in $E_8$ whose decomposition into a linear combination of simple roots contains $\alpha_8$ with 
coefficient 2, namely the highest root.
Hence, there are at most 56 positive roots in $E_8$ whose decomposition into a linear combination of simple
roots contains $\alpha_8$ with coefficient 1. Denote the direct sum of the corresponding root subspaces in $\mathfrak g$ by $W$.
Clearly, $\mathfrak l\subset \mathfrak g$ preserves $W$. A direct calculation shows that $(\alpha_i,\beta)=0$ for $1\le i\le 6$ 
and $(\alpha_7,\beta)=1/2$. So, $\alpha_i+\beta$ is not a root if $1\le i\le 7$, and 
hence $\mathfrak g_\beta$ is a highest weight subspace for the action of $\mathfrak l$ on $W$.
It also follows from the values of these scalar products that $\mathfrak g_\beta$ is a subspace of weight $\varpi_7$ in terms of the $\mathfrak l$-action.
But we know that $\dim V=56$, so $V$ is isomorphic to $W$ as an $\mathfrak l$-representation, and we can identify them.

Now let us decompose $V$ into a sum of irreducible $\mathfrak{sl}_8$-representations. $E_8$ has a subsystem 
of type $A_7$, its simple roots are $\alpha_1, \alpha_3, \alpha_4,\alpha_5,\alpha_6,\alpha_7,\alpha_8$.
Hence, $\beta_1=\alpha_1+\alpha_3+\alpha_4+\alpha_5+\alpha_6+\alpha_7+\alpha_8$ is a root of $E_8$.
It has nonnegative scalar products with $\alpha_i$ if $i=1$ or $3\le i\le 8$, hence $(\alpha_i+\beta_1,\alpha_i+\beta_1)>1$ and 
$\alpha_i+\beta_1$ is not a root of $E_8$. The decomposition of $\alpha'+\beta_1$ into a sum of simple roots contains $\alpha_8$ with coefficient 1 and
$\alpha_2$ with coefficient $-2$, so $\alpha'+\beta_1$ also is not a root of $E_8$. Therefore, $\mathfrak g_{\beta_1}$ is a highest weight 
subspace of an irreducible $\mathfrak{sl}_8$-subrepresentation of $V$. 
$\beta_1$ is orthogonal to all simple roots of $\mathfrak{sl}_8$ except $\alpha_2$, and $(\beta_1,\alpha_1)=1/2$, so 
this irreducible subrepresentation is isomorphic to $\Lambda^2(\mathbb C^8)$.

In the previous section we have seen that $\alpha_1+\alpha_2+2\alpha_3+3\alpha_4+3\alpha_5+2\alpha_6+\alpha_7$ is a root of 
$E_7$. $\alpha_6+\alpha_7+\alpha_8$ is a root of $A_7\subset E_8$. We have 
$(\alpha_1+\alpha_2+2\alpha_3+3\alpha_4+3\alpha_5+2\alpha_6+\alpha_7, \alpha_6+\alpha_7+\alpha_8)=-1/2$, so the reflection 
defined by $\alpha_6+\alpha_7+\alpha_8$ maps $\alpha_1+\alpha_2+2\alpha_3+3\alpha_4+3\alpha_5+2\alpha_6+\alpha_7$ to 
$\beta_2=\alpha_1+\alpha_2+2\alpha_3+3\alpha_4+3\alpha_5+3\alpha_6+2\alpha_7+\alpha_8$.
Now, $(\beta_2,\alpha_i)=0$ for $i=1,3,4,5,7,8$, $(\beta_2,\alpha_6)=1/2$, and $(\beta_2,\alpha')=0$.
Hence, $(\beta_2+\alpha_i,\beta_2+\alpha_i)>1$ for $i\ne 2$, $(\beta_2+\alpha',\beta_2+\alpha')>1$, and 
$\beta_2+\alpha_i$ for $i\ne 2$ and $\beta_2+\alpha'$ are not roots of $E_8$. We see that 
$\mathfrak g_{\beta_2}$ is a highest weight subspace of an irreducible $\mathfrak{sl}_8$-subrepresentation of $V$,
and that this subrepresentation is $\Lambda^2(\mathbb C^8)^*$. Finally, $\dim \Lambda^2(\mathbb C^8)=\dim \Lambda^2(\mathbb C^8)^*=28$, 
$\dim V=56$, so $\Lambda^2(\mathbb C^8)$ and $\Lambda^2(\mathbb C^8)^*$ are all irreducible $\mathfrak{sl}_8$-subrepresentations of $V$.
Therefore, $\mathfrak X(V)$ consists of all vectors of the form $\pm(\veps_i+\veps_j)$ ($1\le i < j\le 8$).

Now we prove that there are no nontrivial $\mathfrak l$-compatible multiplications on $V$ using Lemma \ref{weightlatticefailure}.
The Dynkin diagram of type $B_l$ has no nontrivial 
automorphisms, so $\lambda^*=\lambda=\varpi_7$.
From \cite[Table 5]{vinonisch} we see that 
$V^*\otimes R(\mathfrak l)\cong V(\varpi_1+\varpi_7)\oplus V(\varpi_2)
\oplus V(\varpi_7)$. A direct calculation of scalar products shows that 
$\varpi_1=\veps_2-\veps_1$, $\varpi_2=2\veps_1$, and $\varpi_7=\veps_1+\veps_8$. We have to consider three cases.

1. $\gamma+\lambda^*=\varpi_1+\varpi_7$, $\gamma=\varpi_1=\veps_2-\veps_1$.
Set $\nu=\veps_1+\veps_3$, then $\nu+\gamma=\veps_2+\veps_3\in\mathfrak X(V)$
and $\nu+\gamma+\lambda^*=2\veps_1+\veps_2+\veps_3\notin\Phi$.

2. $\gamma+\lambda^*=\varpi_2$, $\gamma=\varpi_2-\varpi_7=\veps_1-\veps_8$.
Set $\nu=\veps_7+\veps_8$, then $\nu+\gamma=\veps_1+\veps_7\in\mathfrak X(V)$ and
$\nu+\gamma+\lambda^*=2\veps_1+\veps_7+\veps_8\notin\Phi$.

3. If $\gamma+\lambda^*=\varpi_7$, then $\gamma=0\notin\Phi^+$.

\subsection{Algebra $\mathfrak l$ of type $E_8$}

$E_8$ is a self-dual root system, all roots have equal lengths, and the highest root equals 
$2\alpha_1+3\alpha_2+4\alpha_3+6\alpha_4+5\alpha_5+4\alpha_6+3\alpha_7+2\alpha_8$.
All coefficients here are grater than 1, so by Proposition \ref{shortroot}, any 
$\mathfrak l$-compatible multiplication on any $\mathfrak l$-module is trivial.

\subsection{Algebra $\mathfrak l$ of type $F_4$}

A root system $\Phi$ of type $F_4$ is isomorphic to its dual $\Phi^\vee$, but roots have different lengths.
Roots of $\Phi^\vee$ corresponding to (any) simple root system of $\Phi$ also form a system of simple roots,
but if Cartan matrices for these simple roots are the same, then the first (resp. second, third, fourth) simple root 
of $\Phi$ corresponds to the fourth (resp. third, second, first) simple root of $\Phi^\vee$. So, choose
a simple root system $\alpha_1,\alpha_2,\alpha_3,\alpha_4$ in $\Phi$, and denote the corresponding roots of $\Phi^\vee$ by
$\beta_4,\beta_3,\beta_2,\beta_1$, respectively. Then $\beta_1,\beta_2,\beta_3,\beta_4$ is a simple root system in $\Phi^\vee$, 
and the corresponding Cartan matrix is the same as for $\alpha_1,\alpha_2,\alpha_3,\alpha_4$.
In particular, the highest short root equals $\beta_1+2\beta_2+3\beta_3+2\beta_4$, 
so, since $\beta_1$ corresponds to $\alpha_4$, by Proposition \ref{shortroot} we only have to 
consider $V=V(\lambda)$, where $\lambda=\varpi_4$.

We use an explicit construction for $F_4$ (see \cite[\S12]{humps}). Consider a Euclidean space
with an orthonormal basis $\veps_1,\veps_2,\veps_3,\veps_4$. 
Then a root system of type $F_4$ is formed by all vectors 
$\pm\veps_i$, $\pm\veps_i\pm\veps_j$, ($i\ne j$), and
$(\pm\veps_1\pm\veps_2\pm\veps_3\pm\veps_4)/2$ (in all cases the signs may be chosen independently).
As a system of simple roots, we can take $\alpha_1=\veps_2-\veps_3$, $\alpha_2=\veps_3-\veps_4$, $\alpha_3=\veps_4$,
$\alpha_4=(\veps_1-\veps_2-\veps_3-\veps_4)/2$. Then a direct check shows that $\varpi_4=\veps_1$. 
Hence, $\mathfrak X(V)$ consists of all short roots 
and 0. Another direct computation shows that $\varpi_1=\veps_1+\veps_2$ and $\varpi_3=(3\veps_1+\veps_2+\veps_3+\veps_4)/2$.

We use Lemma \ref{weightlatticefailure} to prove that there are no nontrivial $\mathfrak l$-compatible multiplications.
The Dynkin diagram of type $F_4$ has no nontrivial automorphisms, so $\lambda^*=\lambda$.
From \cite[Table 5]{vinonisch} we see that $V^*\otimes R(\mathfrak l)\cong V(\varpi_1+\varpi_4)\oplus V(\varpi_3)
\oplus V(\varpi_4)$. We have to consider three cases.

1. $\gamma+\lambda^*=\varpi_1+\varpi_4$, $\gamma=\varpi_1=\veps_1+\veps_2$.
Set $\nu=-\veps_2$, then $\nu+\gamma=\veps_1\in\mathfrak X(V)$ and
$\nu+\gamma+\lambda^*=2\veps_1\notin\Phi$.

2. $\gamma+\lambda^*=\varpi_3$, $\gamma=\varpi_3-\varpi_4=(\veps_1+\veps_2+\veps_3+\veps_4)/2$.
Set $\nu=0$, then $\nu+\gamma=\gamma=(\veps_1+\veps_2+\veps_3+\veps_4)/2\in\mathfrak X(V)$, 
$\nu+\gamma+\lambda^*=\varpi_3=(3\veps_1+\veps_2+\veps_3+\veps_4)/2\notin\Phi$.

3. If $\gamma+\lambda^*=\varpi_4$, then $\gamma=0\notin\Phi^+$.

\subsection{Algebra $\mathfrak l$ of type $G_2$}

The usage of Lemma \ref{shortroot} in this case is similar to the previous case. Namely, if $\Phi$ is a root system 
of type $G_2$, then the dual root system $\Phi^+$ is also of type $G_2$, and if $\alpha_1$ and $\alpha_2$ are simple roots of $\Phi$, 
then the corresponding roots of $\Phi^\vee$ also form a simple root system, but if Cartan matrices for these simple 
root systems are the same, then the first (resp. second) simple root of $\Phi$ corresponds to the second
(resp. the first) simple root of $\Phi^\vee$. So, denote the root of $\Phi^\vee$ corresponding to $\alpha_1\in\Phi$ (resp. to $\alpha_2\in\Phi$)
by $\beta_2$ (resp. by $\beta_1$). Then the highest short root of $\Phi^\vee$ is $2\beta_1+\beta_2$, and by Lemma \ref{shortroot}
we have to consider $V=V(\lambda)$, where $\lambda=\varpi_1$.

A root system of type $G_2$ can be constructed as the union a root system of type $A_2$ and sums of two roots from $A_2$
such that the angle between them is $\pi/3$. More exactly, consider a Euclidean space $E$ with an orthonormal basis 
$\widetilde{\veps_1},\widetilde{\veps_2},\widetilde{\veps_3}$ and its subspace 
$E'=\langle\widetilde{\veps_1}+\widetilde{\veps_2}+\widetilde{\veps_3}\rangle^\bot$. Denote the 
orthogonal projection $E\to E'$ by $q$ and $\veps_i=q(\widetilde{\veps_i})$. Then all vectors 
$\veps_i-\veps_j$ ($i\ne j$) form a root system of type $A_l$. The angle between $\veps_1-\veps_2$ and $\veps_1-\veps_3$ 
equals $\pi/3$, and $\veps_1-\veps_2+\veps_1-\veps_3=3\veps_1-\veps_1-\veps_2-\veps_3=3\veps_1$ is one of the roots of $G_2$.
The remaining roots can be obtained by the action of the Weyl group of $A_2$, they equal $\pm3\veps_i$.
A system of positive roots is formed by $\veps_i-\veps_j$, where $1\le i< j\le 3$, $3\veps_1$, $3\veps_2$, and $3\veps_1+3\veps_2=-3\veps_3$.
The resulting system of simple roots consists of $\alpha_1=\veps_1-\veps_2$ and $\alpha_2=3\veps_2$. The fundamental weights 
can be written as $\varpi_1=\veps_1-\veps_3$, $\veps_2=-3\veps_3$. Hence, $\mathfrak X(V(\varpi_1))$ consists of all short roots and 0.

Again we use Lemma \ref{weightlatticefailure} to prove that there are no nontrivial $\mathfrak l$-compatible multiplications.
The Dynkin diagram of type $G_2$ has no nontrivial automorphisms, so $\lambda^*=\lambda$.
From \cite[Table 5]{vinonisch} we see that $V^*\otimes R(\mathfrak l)\cong V(\varpi_1+\varpi_2)\oplus V(2\varpi_1)
\oplus V(\varpi_1)$. There are three cases to consider.

1. $\gamma+\lambda^*=\varpi_1+\varpi_2$, $\gamma=\varpi_2=-3\veps_3$.
Take $\nu=\veps_3-\veps_2$, then $\nu+\gamma=-2\veps_3-\veps_2=\veps_1-\veps_3\in\mathfrak X(V)$
and $\nu+\gamma+\lambda^*=2\lambda^*\notin\Phi$.

2. $\gamma+\lambda^*=2\varpi_1$, $\gamma=\varpi_1=\veps_1-\veps_3$.
Take $\nu=0$, then $\nu+\gamma=\veps_1-\veps_3\in\mathfrak X(V)$ and
$\nu+\gamma+\lambda^*=2\lambda^*\notin\Phi$.

3. If $\gamma+\lambda^*=\varpi_1$, then $\gamma=0\notin\Phi^+$.

We have considered all types of simple Lie algebras, so the proof of Theorem \ref{multsclass} is now finished.

\section{Classification of generically transitive $(\Ga)^m$-actions on generalized flag varieties}
\label{finalclass}

To prove Theorem \ref{actsclass}, we apply Theorem \ref{actsandmults} to each case of a simple group $G$
and its parabolic subgroup $P$ such that $G=\Aut(G/P)^\circ$ there exists at least one generically transitive $(\Ga)^m$-action
on $G/P$ (see Introduction for the list of the cases we have to consider). We use notation from Introduction and from 
Section \ref{prelim}. To understand the action of $L$ on $\mathfrak u^-$, we argue as follows.

$L$ is always locally isomorphic to the product of the commutator subgroup of $L$, which is a semisimple group, and the center of
$L$, which is a torus. In the cases we have to consider, $P$ is a maximal parabolic subgroup, $P=P_i$. To get the Dynkin
diagram of the commutator subgroup of $L$, we remove the $i$th vertex from the Dynkin diagram of $G$. This also gives us an 
embedding of the root system of $L$ into the root system of $G$, and the subsystem of positive (resp. simple)
roots of $L$ is embedded into the system of positive (resp. simple) roots of $G$.

The $L$-action on $\mathfrak u^-$ is always faithful, the central torus of $L$ acts nontrivially on $\mathfrak u^-$.
The irreducible $L$-subrepresentations 
of $\mathfrak u^-$ are in bijection with negative roots $\beta$ such that the decomposition 
of $\beta$ into a sum of simple roots contains $\alpha_i$, and for every $j\ne i$, $\beta-\alpha_j$ is not a root from the root system of $G$.
(More precisely, $\mathfrak g_\beta$ is a lowest $L$-weight subspace of such an $L$-representation.)
In particular, if $\alpha=\sum n_j\alpha_j$ is the highest root, and $n_i=1$, then $\beta$ must be the lowest root, 
and $\mathfrak u^-$ is an irreducible $L$-representation.
In this case, it is also easy to find the highest $L$-weight subspace, namely, it equals $\mathfrak g_{-\alpha_i}$.
The numerical label of the highest $L$-weight of this representation 
at a simple root $\alpha_j$ (understood as a simple root of $L$) equals $-2(\alpha_i,\alpha_j)/(\alpha_j,\alpha_j)$.

\subsection{Group $G$ of type $A_l$, $P=P_1$ or $P=P_l$}

The subgroups $P_1$ and $P_l$ can be interchanged by a diagram automorphism, so without loss of generality 
we may suppose that $P=P_1$. Using the generic argument stated above, we conclude that $[L,L]$ is a group of 
type $A_l$, $\mathfrak u^-$ is an irreducible $L$-representation and its lowest weight is minus the last 
fundamental weight of $L$, so $\mathfrak u^-$ is a tautological 
$[L,L]$-module.
This case was considered in Section
\ref{commalgarises}. Since the central torus of $L$ is one-dimensional and acts nontrivially on $\mathfrak u^-$,
the commutative 
unipotent subalgebras $\mathfrak a\subset\mathfrak p^-$ such that 
$\mathfrak a\cap\mathfrak l=0$ and $\mathfrak a\oplus\mathfrak l=\mathfrak p^-$, considered up to $L$-conjugation,
are parametrized by isomorphism classes of $l$-dimensional 
associative commutative algebras with nilpotent multiplication operators.

To prove Theorem \ref{actsclass} in this case, we have to check that if two commutative unipotent 
subalgebras $\mathfrak a_1\subseteq \mathfrak u_0^-$ and $\mathfrak a_2\subseteq \mathfrak u_0^-$
such that $\mathfrak a_i\cap\mathfrak l=0$ and $\mathfrak a_i\oplus\mathfrak l=\mathfrak p^-$ are $P$-conjugate, 
then they are $L$-conjugate. Without loss of generality, $G=SL_{l+1}$. Take $p\in P$ such that 
$(\Ad p)\mathfrak a_1=\mathfrak a_2$. Since $P=L\ltimes U$, we can write 
$p=gu$, where $g\in L$, $u\in U$. Then $(\Ad u)\mathfrak a_1=(\Ad g^{-1})\mathfrak a_2$.
We write elements of $G=SL_{l+1}$ as block matrices with the following block sizes:
$$
\left(\begin{array}{c|c}
1\times 1 & 1\times l \\ \hline
l\times 1 & l\times l
\end{array}\right).
$$
Then $L$ is the group of all matrices of the form 
$$
\left(\begin{array}{c|c}
a & 0 \\ \hline
0 & b
\end{array}\right),
$$
where $a\in\mathbb C\setminus\{0\}$, $b\in GL_l$ and $a\det b=1$.
All elements of $\mathfrak a_1$ are matrices of the form
$$
\left(\begin{array}{c|c}
0 & 0 \\ \hline
a & b
\end{array}\right),
$$
and $a$ can be an arbitrary column vector of length $l$ since $\mathfrak a_1+\mathfrak l=\mathfrak p^-$.
All elements of $\mathfrak a_2$ are also of this form. 
$\mathfrak u$ 
$U$
is the 
group
of all matrices of the form
$$
\left(\begin{array}{c|c}
1 & a \\ \hline
0 & \id_l
\end{array}\right),
$$
where $a$ is an arbitrary row vector of length $l$.
Suppose that
$$
g=\left(\begin{array}{c|c}
x & 0 \\ \hline
0 & y
\end{array}\right),
\quad
u=\left(\begin{array}{c|c}
1 & v \\ \hline
0 & 1
\end{array}\right).
$$
Assume that $u\ne \id_{l+1}$, then $v\ne 0$.
If 
$$
a_1=\left(\begin{array}{c|c}
0 & 0 \\ \hline
a'_1 & a''_1
\end{array}\right)\in\mathfrak a_1,
$$
then
$$
u
a_1
u^{-1}=
\left(\begin{array}{c|c}
1 & v \\ \hline
0 & 1
\end{array}\right)
\left(\begin{array}{c|c}
0 & 0 \\ \hline
a'_1 & a''_1
\end{array}\right)
\left(\begin{array}{c|c}
1 & -v \\ \hline
0 & 1
\end{array}\right)=
\left(\begin{array}{c|c}
va'_1 & -va'_1v+va''_1 \\ \hline
a'_1 & -a'_1v+a''_1
\end{array}\right).
$$
Since $v\ne0$, there exists $a_1\in \mathfrak a_1$ such that the topmost leftmost entry of this matrix is nonzero.
On the other hand, if 
$$
a_2=\left(\begin{array}{c|c}
0 & 0 \\ \hline
a'_2 & a''_2
\end{array}\right)\in\mathfrak a_2,
$$
then
$$
g^{-1}a_2g=
\left(\begin{array}{c|c}
x & 0 \\ \hline
0 & y
\end{array}\right)
\left(\begin{array}{c|c}
0 & 0 \\ \hline
a'_2 & a''_2
\end{array}\right)
\left(\begin{array}{c|c}
x^{-1} & 0 \\ \hline
0 & y^{-1}
\end{array}\right)=
\left(\begin{array}{c|c}
0 & 0 \\ \hline
ya'_2x^{-1} & ya''_2y^{-1}
\end{array}\right),
$$
and the topmost leftmost entry of this matrix is always 0.
Therefore, if $u\ne \id_{l+1}$, then 
$(\Ad u)\mathfrak a_1$ and $(\Ad g^{-1})\mathfrak a_2$
cannot coincide. So, $u=\id_{l+1}$, $p=g\in L$, 
and $\mathfrak a_1$ and $\mathfrak a_2$ are $L$-conjugate.

\subsection{Group $G$ of type $A_l$, $P=P_i$, $1<i<l$}

The commutator subgroup of $G$ is locally isomorphic to $SL_i\times SL_{l+1-i}$, and 
$\mathfrak u^-$ is an irreducible $L$-representation isomorphic to $V_{SL_i}(\varpi_{i-1})\otimes
V_{SL_{l+i-1}}(\varpi_1)$. It follows from Proposition \ref{possibledecomp} that 
there are no nontrivial $\mathfrak l$-compatible multiplications in this case.

\subsection{Group $G$ is not of type $A_l$}

The proof for the remaining cases ($G$ of type $B_l$, $C_l$, $D_l$, $E_6$, or $E_7$) follows the same pattern. 
We compute the commutator subgroup of $L$, and it turns out to be a simple group. We also note that 
$\mathfrak u^-$ is an irreducible $L$-representation, and compute 
its highest $[L,L]$-weight.
Then we see from Theorem \ref{multsclass} that in this case there are no nontrivial $\mathfrak l$-compatible multiplications.
These calculations are summarized in the following table:

\medskip


\begin{center}
\begin{tabular}{c|c|c|c}
Type of $G$ & $P$ & Type of $[L,L]$  & Highest $[L,L]$-weight of $\mathfrak u^-$ \\ \hline
\hline
$B_l$ ($l\ge 3$) & $P_1$ & $B_{l-1}$ & $\varpi_1$ \\ \hline
$C_l$ ($l\ge 2$) & $P_l$ & $A_{l-1}$ & $2\varpi_{l-1}$ \\ \hline
$D_4$ & $P_1$ & $A_3$ & $\varpi_2$ \\ \hline
$D_l$ ($l\ge 5$) & $P_1$ & $D_{l-1}$ & $\varpi_1$ \\ \hline
$D_l$ ($l\ge 5$) & $P_{l-1}, P_l$ & $A_{l-1}$ & $\varpi_{l-2}$ \\ \hline
$E_6$ & $P_1,P_6$ & $D_5$ & $\varpi_5$ \\ \hline
$E_7$ & $P_7$ & $E_6$ & $\varpi_6$ 
\end{tabular}
\end{center}

This finishes the proof of Theorem \ref{actsclass}.

\end{document}